\newtheorem{theorem}{Theorem}[section]
\newtheorem{lemma}[theorem]{Lemma}
\newtheorem{definition}[theorem]{Definition}
\newtheorem{example}[theorem]{Example}
\newtheorem{remark}[theorem]{Remark}
\newtheorem{corollary}[theorem]{Corollary}
\newtheorem{proposition}[theorem]{Proposition}
\newtheorem{problem}[theorem]{Problem}
\newcommand{\minusre}{\hspace{0.3em}\raisebox{0.3ex}{\sl \tiny /}\hspace{0.3em}}
\newcommand{\minusli}{\hspace{0.3em}\raisebox{0.3ex}{\sl \tiny $\setminus $}\hspace{0.3em}}
\newcommand{\lex}{\,\overrightarrow{\times}\,}
\newcommand{\Ker}{\mbox{\rm Ker}}
\newcommand{\Rad}{\mbox{\rm Rad}}
\newcommand{\Infinit}{\mbox{\rm Infinit}}
\newcommand{\ord}{\mbox{\rm ord}}
\begin{document}
\title[Pseudo MV-algebras and Lexicographic Product]{ Pseudo MV-algebras and Lexicographic Product}
\author[Anatolij Dvure\v{c}enskij]{Anatolij Dvure\v{c}enskij$^{1,2}$}
\date{}%
\maketitle
\begin{center}  \footnote{Keywords: Pseudo MV-algebra, symmetric pseudo MV-algebra, $\ell$-group, strong unit, lexicographic product, ideal, retractive ideal, $(H,u)$-perfect pseudo MV-algebra, lexicographic pseudo MV-algebra, strong $(H,u)$-perfect pseudo MV-algebra

 AMS classification: 06D35, 03G12

This work was supported by  the Slovak Research and Development Agency under contract APVV-0178-11,  grant VEGA No. 2/0059/12 SAV, and
CZ.1.07/2.3.00/20.0051.
 }
Mathematical Institute,  Slovak Academy of Sciences,\\
\v Stef\'anikova 49, SK-814 73 Bratislava, Slovakia\\
$^2$ Depart. Algebra  Geom.,  Palack\'{y} University\\
17. listopadu 12, CZ-771 46 Olomouc, Czech Republic\\

E-mail: {\tt dvurecen@mat.savba.sk}
\end{center}

\begin{abstract}
We study algebraic conditions when a pseudo MV-algebra is an interval in the lexicographic product of an Abelian unital $\ell$-group and an $\ell$-group that is not necessary Abelian. We introduce $(H,u)$-perfect pseudo MV-algebras and strong $(H,u)$-perfect pseudo MV-algebras, the latter ones will have a representation by a lexicographic product. Fixing a unital $\ell$-group $(H,u)$, the category of strong $(H,u)$-perfect pseudo MV-algebras is categorically equivalent to the category of $\ell$-groups.
\end{abstract}

\section{Introduction}

MV-algebras were introduced by Chang \cite{Cha} as the algebraic counterpart of \L ukasiewicz infinite-valued calculus and during the last 56 years MV-algebras entered deeply in many areas of mathematics and logics. More than 10 years ago, a non-commutative generalization of MV-algebras has been independently appeared. These new algebras are said to be pseudo MV-algebras in \cite{GeIo} or a generalized MV-algebras in \cite{Rac}. For them author \cite{151} generalized a famous Mundici's representation theorem, see e.g. \cite[Cor 7.1.8]{CDM}, showing that every pseudo MV-algebra is always an interval in a unital $\ell$-group not necessarily Abelian. Such algebras have the operation $\oplus$ as a truncated sum and they have two negations. We note that the equality of these two negations does not necessarily imply that a pseudo MV-algebra is an MV-algebra. According to Komori's theorem \cite{Kom}, \cite[Thm 8.4.4]{CDM}, the variety lattice of MV-algebras is countably,  whereas the one of pseudo MV-algebras is uncountable, cf. \cite{Jak, DvHo}. Therefore, the structure of pseudo MV-algebras is much richer than the one of MV-algebras. In \cite{DvHo} it was shown that the class of pseudo MV-algebras where each maximal ideal is normal is a variety. This variety is also very rich and within this variety many important properties of MV-algebras remain.

In \cite{DDT}, perfect pseudo MV-algebras were studied. They are characterized as those that every element of a perfect pseudo MV-algebra is either an infinitesimal or a co-infinitesimal. In \cite{DDT} we have shown that the category of perfect pseudo MV-algebras is equivalent to the variety of $\ell$-groups, and every such an algebra $M$ is in the form of an interval in the lexicographic product $\mathbb Z \lex G$, i.e. $M\cong \Gamma(\mathbb Z \lex G,(1,0))$. This generalized the result from \cite{DiLe1} for perfect MV-algebras. A more general structure, $n$-perfect pseudo MV-algebras were studied in \cite{Dv08}. They can be characterized as those pseudo MV-algebras that have $(n+1)$-comparable slices, and their representation is again in the form of an interval in the lexicographic product $\frac{1}{n}\mathbb Z \lex G$, i.e. every strong $n$-perfect pseudo MV-algebra $M$ is of the form $\Gamma(\frac{1}{n}\mathbb Z \lex G,(1,0))$, where $G$ is any $\ell$-group.
In the paper \cite{264}, we have studied so-called $(\mathbb H,1)$-perfect pseudo MV-algebras, where $(\mathbb H,1)$ is a unital $\ell$-subgroup of the unital $\ell$-group of reals $(\mathbb R,1)$. They can be represented in the form $\Gamma(\mathbb H\lex G,(1,0))$ and such MV-algebras were described in \cite{DiLe2}.

Recently, lexicographic MV-algebras were studied in \cite{DFL}. Such algebras are of the form $\Gamma(H\lex G,(u,0))$, where $(H,u)$ is an Abelian unital $\ell$-group and $G$ is an Abelian $\ell$-group. The main aim of the present paper is to generalize such lexicographic MV-algebras also for the case of pseudo MV-algebras.  Therefore, we introduce so-called $(H,u)$-perfect and strong $(H,u)$-perfect pseudo MV-algebras, where $(H,u)$ is an Abelian unital $\ell$-group. We show that strong $(H,u)$-perfect pseudo MV-algebras are always of the form $\Gamma(H\lex G, (u,0))$, where $G$ is an $\ell$-group not necessarily Abelian. This category will be always categorically equivalent with the variety of $\ell$-groups. Therefore, we generalize many interesting results that were known only in the realm of MV-algebras, see \cite{DiLe2,CiTo, DFL}.

The paper is organized as follows. Section 2 gathers necessary properties of pseudo MV-algebras. Section 3 presents a definition of $(H,u)$-perfect pseudo MV-algebras as those which can be decomposed into a system of comparable slices indexed by the elements of the interval $[0,u]_H=\{h\in H: 0\le h \le u]$, where $(H,u)$ is an Abelian unital $\ell$-group. Section 4 defines strong $(H,u)$-perfect pseudo MV-algebras and we show their representation by $\Gamma(H\lex G,(u,0))$. More details on local pseudo MV-algebras with retractive ideals will be done in Section 5. A free product representation of local pseudo MV-algebras will be done in Section 6. In Section 7 we describe pseudo MV-algebras with a so-called lexicographic ideal. A categorical equivalence of the category of strong $(H,u)$-perfect pseudo MV-algebras will be established in Section 8. Finally, in Section 9 we describe weak $(H,u)$-perfect pseudo MV-algebras as those that they can be represented in the form $\Gamma(H\lex G, (u,g))$, where $g$ is an arbitrary element (not necessarily $g=0$) of an $\ell$-group $G$.

\section{Pseudo MV-algebras}

According to \cite{GeIo}, a {\it pseudo MV-algebras} or a GMV-algebra by \cite{Rac} is an algebra $(M;
\oplus,^-,^\sim,0,1)$ of type $(2,1,1,$ $0,0)$ such that the
following axioms hold for all $x,y,z \in M$ with an additional
binary operation $\odot$ defined via $$ y \odot x =(x^- \oplus y^-)
^\sim $$
\begin{enumerate}
\item[{\rm (A1)}]  $x \oplus (y \oplus z) = (x \oplus y) \oplus z;$

\item[{\rm (A2)}] $x\oplus 0 = 0 \oplus x = x;$

\item[{\rm (A3)}] $x \oplus 1 = 1 \oplus x = 1;$

\item[{\rm (A4)}] $1^\sim = 0;$ $1^- = 0;$

\item[{\rm (A5)}] $(x^- \oplus y^-)^\sim = (x^\sim \oplus y^\sim)^-;$

\item[{\rm (A6)}] $x \oplus (x^\sim \odot y) = y \oplus (y^\sim
\odot x) = (x \odot y^-) \oplus y = (y \odot x^-) \oplus
x;$\footnote{$\odot$ has a higher binding priority than $\oplus$.}

\item[{\rm (A7)}] $x \odot (x^- \oplus y) = (x \oplus y^\sim)
\odot y;$

\item[{\rm (A8)}] $(x^-)^\sim= x.$
\end{enumerate}

Any pseudo MV-algebra is a distributive lattice where (A6) and (A7) define the joint $x\vee y$ and the meet  $x\wedge y$ of $x,y$, respectively.

We note that a {\it po-group} (= partially ordered group) is a
group $(G;+,0)$ (written additively) endowed with a partial order $\le$ such that if $a\le b,$ $a,b \in G,$ then $x+a+y \le x+b+y$ for all $x,y \in G.$  We denote by $G^+=\{g \in G: g \ge 0\}$ the {\it positive cone} of $G.$ If, in addition, $G$
is a lattice under $\le$, we call it an $\ell$-group (= lattice
ordered group).  An element $u\in G^+$ is said to be a {\it strong unit}
(= order unit) if $G = \bigcup_n [-nu,nu]$, and the couple $(G,u)$ with a fixed strong unit $u$ is
said to be a {\it unital po-group} or a {\it unital $\ell$-group}, respectively. The {\it commutative center} of a group $H$ is the set $C(H)=\{h\in H: h+h' = h'+h, \ \forall h' \in H\}$. Finally, two unital $\ell$-groups $(G,u)$ and $(H,v)$ is {\it isomorphic} if there is an $\ell$-group isomorphism $\phi:G \to H$ such that $\phi(u)=v$. In a similar way an isomorphism and a homomorphism of unital po-groups are defined.
For more information on po-groups and $\ell$-groups and for unexplained notions about them, see \cite{Fuc, Gla}.

By $\mathbb R$ and $\mathbb Z$ we denote the groups of reals and natural numbers, respectively.

Between pseudo MV-algebras and unital $\ell$-groups there is a very close connection:
If $u$ is a strong unit of a (not necessarily Abelian)
$\ell$-group $G$,
$$
\Gamma(G,u) := [0,u]
$$
and
\begin{eqnarray*}
x \oplus y &:=&
(x+y) \wedge u,\\
x^- &:=& u - x,\\
x^\sim &:=& -x +u,\\
x\odot y&:= &(x-u+y)\vee 0,
\end{eqnarray*}
then $(\Gamma(G,u);\oplus, ^-,^\sim,0,u)$ is a pseudo MV-algebra
\cite{GeIo}.

A pseudo MV-algebra $M$ is an {\it MV-algebra} if $x\oplus y= y\oplus x$ for all $x,y \in M.$ We denote by $\mathcal{P}_s\mathcal{MV}$ and $\mathcal{MV}$ the variety of pseudo MV-algebras and MV-algebras, respectively.

The basic representation theorem for pseudo MV-algebras is the following generalization \cite{151} of the Mundici's famous result:

\begin{theorem}\label{th:2.1}
For any pseudo MV-algebra $(M;\oplus,^-,^\sim,0,1)$,
there exists a unique $($up to iso\-morphism$)$ unital $\ell$-group
$(G,u)$ such that $(M;\oplus,^-,^\sim,0,1)$ is isomorphic to $(\Gamma(G,u);\oplus,^-,^\sim,0,u)$. The
functor $\Gamma$ defines a categorical equivalence of the category
of pseudo MV-algebras with the category of unital $\ell$-groups.
\end{theorem}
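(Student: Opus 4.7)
My plan is to generalize Mundici's good-sequence construction to the non-commutative setting. Given a pseudo MV-algebra $M$, define a \emph{good sequence} to be a sequence $\mathbf{a}=(a_1,a_2,\ldots)$ of elements of $M$ satisfying $a_i\oplus a_{i+1}=a_i$ for every $i$ and with $a_i=0$ for all but finitely many $i$. Let $M_\omega$ denote the set of good sequences. I would first verify directly from (A1)--(A8) that the condition $a_i\oplus a_{i+1}=a_i$ is equivalent to $a_{i+1}\le a_i^-$ (equivalently $a_{i+1}\le a_i^\sim$), and use (A6)--(A7) to derive the Riesz-type decomposition identities that replace commutativity in the familiar MV-case.

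Next, I would define an addition $\mathbf{a}+\mathbf{b}=\mathbf{c}$ on $M_\omega$ coordinatewise with a carry, where $c_n$ is built from $a_i,b_j$ with $i+j$ near $n$ by combining the $\oplus$-sums and the overflow through later coordinates; associativity and absorption require systematic use of (A1), (A5), and the identities (A6), (A7). The key technical step is to show that $(M_\omega,+,\mathbf 0)$ is a cancellative, positively ordered monoid, with the componentwise order giving joins and meets. This is the main obstacle: in the absence of commutativity, cancellation and the lattice compatibility on $M_\omega$ must be established using the two negations $^-,^\sim$ symmetrically, and one needs pseudo-MV analogues of Chang's good sequence lemmas (e.g.\ that any two good sequences have a common refinement).

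Having $M_\omega$, I would form its Grothendieck group $G_M$. Cancellativity of $M_\omega$ makes the natural map $M_\omega\hookrightarrow G_M$ an order-embedding, and the lattice structure extends uniquely to an $\ell$-group structure on $G_M$. The element $u:=(1,0,0,\ldots)$ is a strong unit, since any $\mathbf a\in M_\omega$ of support length $k$ satisfies $\mathbf a\le ku$. Define $\varphi:M\to\Gamma(G_M,u)$ by $\varphi(a)=(a,0,0,\ldots)$; then $\varphi$ is a bijection, and the prescriptions $x\oplus y=(x+y)\wedge u$, $x^-=u-x$, $x^\sim=-x+u$ are precisely matched by the good-sequence operations, yielding the desired isomorphism. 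Uniqueness of $(G,u)$ follows because every element of $G$ is a difference of finite sums of elements of $\Gamma(G,u)$, so any $\Gamma$-isomorphism extends canonically to a unital $\ell$-group isomorphism.

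Finally, for the categorical equivalence, I would extend any pseudo MV-homomorphism $f:M\to N$ to good sequences componentwise, show this gives a well-defined monoid homomorphism $M_\omega\to N_\omega$, and then extend uniquely to a unital $\ell$-group homomorphism $G_M\to G_N$ preserving the strong unit. Functoriality and naturality of $\varphi$ are then routine diagram chases. The equivalence $\Gamma$ with inverse $M\mapsto(G_M,u)$ follows. Throughout, the recurring technical hurdle is replacing the commutative identities used in the classical proof with the weaker identities (A6)--(A7); the rest is a careful but direct translation of the Mundici framework to the pseudo MV-setting.
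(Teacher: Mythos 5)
The paper itself gives no proof of Theorem \ref{th:2.1}: it is imported from \cite{151}, where the argument is indeed a non-commutative adaptation of Mundici's good-sequence construction, so your overall strategy is the intended one. However, your outline contains a concrete false step at its very foundation. The condition $a_i\oplus a_{i+1}=a_i$ is \emph{not} equivalent to $a_{i+1}\le a_i^-$. In $\Gamma(G,u)$ one has $a\oplus b=(a+b)\wedge u=a+(b\wedge a^\sim)$, so $a\oplus b=a$ is equivalent to $b\wedge a^\sim=0$, whereas $b\le a^-$ only says that the partial sum $a+b$ is defined. Under your reading, in $\Gamma(\mathbb R,1)$ both $(0.5,0.4,0,\dots)$ and $(0.9,0,0,\dots)$ would be good sequences representing the same positive element $0.9$; the correspondence between good sequences and the positive cone would fail to be injective and the entire construction would collapse. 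Everything downstream (the carry formula, cancellativity, the bijection $\varphi$) depends on getting this characterization right.

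A second genuine gap is the appeal to ``the Grothendieck group'' of $M_\omega$. For non-commutative monoids there is no such universal construction with the properties you need, and cancellativity alone does not even guarantee embeddability into a group (Mal'cev's counterexample). The correct tool is Birkhoff's theorem that a cancellative, suitably lattice-ordered semigroup is the positive cone of a unique $\ell$-group (\cite[Thm XIV.2.1]{Bir}, \cite[Thm II.4]{Fuc}) --- precisely the device the present paper uses for the cone $M_0$ in the proof of Theorem \ref{th:3.5} --- and verifying its hypotheses for $M_\omega$ (compatibility of the carry sum with $\vee$ and $\wedge$ on both sides, cancellation from both sides) is where the real difficulty of the non-commutative case sits. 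Since those lemmas, together with associativity of the carry sum and the common-refinement property, are only named and not carried out, the proposal as written is a plan whose first technical assertion is false rather than a proof.
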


We note that the class of pseudo MV-algebras is a variety whereas the class of unital $\ell$-groups is not a variety because it is not closed under infinite products.

Due to this result, if $M=\Gamma(G,u)$ for some unital $\ell$-group $(G,u)$, then $M$ is linearly ordered iff $G$ is a linearly ordered $\ell$-group, see \cite[Thm 5.3]{156}.

Besides a total operation $\oplus$, we can define a partial operation $+$ on any pseudo MV-algebra $M$  in such a way that $x + y$ is defined iff $x \odot y=0$ and then we set
$$x+y := x\oplus y. \eqno(2.1)
$$
In other words, $x+y$ is precisely the group addition $x+y$ if the group sum $x+y$ is defined in $M$.

Let $A,B$ be two subsets of $M$. We define (i) $A\leqslant B$ if $a\le b$ for all $a\in A$ and all $b \in B$, (ii) $A\oplus B=\{a\oplus b: a\in A, b \in B\}$, and (iii) $A + B = \{a+b: $ if $a+b$ exists in $M$ for $a\in A,\ b \in B\}.$ We say that $A+B$ is {\it defined} in $M$ if $a+b$ exists in $M$  for each $a \in A$ and each $b \in B$. (iv) $A^-=\{a^-: a \in A\}$ and $A^\sim = \{a^\sim: a \in A\}.$

Using Theorem \ref{th:2.1}, we have  if $y\le x$, then $x\odot y^-=x-y$ and $y^\sim \odot x = -y+x$, where the subtraction $-$ is in fact the group subtraction in the representing unital $\ell$-group.

We recall that if $H$ and $G$ are two po-groups, then the {\it lexicographic product} $H \lex G$ is the group $H\times G$ which is endowed with the lexicographic order: $(h,g)\le (h_1,g_1)$ iff $h< h_1$ or $h=h_1$ and $g\le g_1$. The lexicographic product $H \lex G$ is an $\ell$-group iff $H$ is linearly ordered group and $G$ is an arbitrary $\ell$-group, \cite[(d) p. 26]{Fuc}. If $u$ is a strong unit for $H$, then $(u,0)$ is a strong unit for $H\lex G$, and $\Gamma(H\lex G,(u,0))$ is a pseudo MV-algebra.

We say that a pseudo MV-algebra $M$ is {\it symmetric} if $x^-=x^\sim$ for all $x \in M$. The pseudo MV-algebra $\Gamma(G,u)$ is symmetric iff $u \in C(G)$, hence the variety of symmetric pseudo MV-algebras is a proper subvariety of the variety $\mathcal {MV}$. For example, $\Gamma(\mathbb R\lex G,(1,0))$ is symmetric and it is an MV-algebra iff $G$ is Abelian.

An {\it ideal} of a pseudo MV-algebra $M$ is any non-empty subset $I$ of $M$ such that (i) $a\le b \in I$ implies $a \in I,$ and (ii) if $a,b \in I,$ then $a\oplus b \in I.$
An ideal is said to be (i) {\it maximal} if $I\ne M$ and it is not a proper subset of another ideal $J \ne M;$ we denote by $\mathcal M(M)$ the set of maximal ideals of $M$, (ii) {\it prime} if $x\wedge y \in I$ implies $x \in I$ or $y \in I$, and (iii) {\it normal} if $x\oplus I=I\oplus x$ for any $x \in M;$ let $\mathcal N(M)$ be the set of normal ideals of $M.$ A pseudo MV-algebra $M$ is {\it local} if there is a unique maximal ideal and, in addition,  this ideal also normal.

There is a one-to-one correspondence between normal ideals and congruences for pseudo MV-algebras, \cite[Thm 3.8]{GeIo}. The quotient pseudo MV-algebra over a normal ideal $I,$  $M/I,$ is defined as the set of all elements of the form $x/I := \{y \in M :
x\odot y^- \oplus y\odot x^- \in I\},$ or equivalently, $x/I := \{y \in M : x^\sim \odot y \oplus y^\sim \odot x \in I\}.$

Let $x\in M$  and an integer $n\ge 0$  be given. We define
$$
  0.x:=0,\quad  1\odot x:=x, \quad (n+1). x := (n. x)\oplus x,
$$
$$
x^0:= 1, \quad x^1 := x,\quad x^{n+1} := x^n \odot x,
$$
$$
0x := 0,\quad 1 x :=x, \quad (n+1)x:= (nx)+ x,
$$
if $nx$ and $(nx)+x$ are defined in $M.$ An element $x$ is said to be an {\it infinitesimal} if $mx$ exists in $M$ for every integer $m \ge 1.$ We denote by $\Infinit(M)$ the set of all infinitesimals of $M.$

We define (i) the {\it radical} of a pseudo MV-algebra $M$,
$\mbox{Rad}(M),$ as the set
$$
\mbox{\rm Rad}(M) = \bigcap\{I:\ I \in {\mathcal M}(M) \},
$$
and (ii) the {\it normal radical} of $M$, $\mbox{Rad}_n(M)$, via
$$
\mbox{Rad}_n(M) = \bigcap\{I:\ I \in {\mathcal N}(M) \cap {\mathcal M}(M)\}
$$
whenever ${\mathcal N}(M) \cap {\mathcal M}(M) \ne \emptyset$.

By \cite[Prop. 4.1, Thm 4.2]{DDJ}, it is possible to show that

$$
\mbox{\rm Rad}(M) \subseteq \mbox{\rm Infinit}(M) \subseteq
\mbox{\rm Rad}_n(M).
$$

The notion of a state is an analogue of a probability measure for pseudo MV-algebras. We say that a mapping $s$ from a pseudo MV-algebra $M$ into the real interval is a {\it state} if (i) $s(a+b)=s(a)+s(b)$ whenever $a+b$ is defined in $M$, and (ii) $s(1)=1$. We define the {\it kernel} of $s$ as the set  $\Ker(s)=\{a \in M: s(a)=0\}$. Then $\Ker(s)$ is a normal ideal of $M$.

If $M$ is an MV-algebra, then at least one state on $M$ is defined. Unlike for MV-algebras, there are pseudo MV-algebras that are stateless, \cite{156} (see also a note just before Theorem \ref{th:4.6} below).  We note that $M$ has at least one state iff $M$ has at least one maximal ideal that is also normal. However, every non-trivial linearly ordered pseudo MV-algebra admits a unique state, \cite[Thm 5.5]{156}.

Let $\mathcal S(M)$ be the set of all states on $M$; it is a convex set. A state $s$ is said to be {\it extremal} if from $s = \lambda s_1 +(1-\lambda) s_2$, where $s_1,s_2 \in \mathcal S(M)$ and $0<\lambda<1$, we conclude $s=s_1=s_2.$ Let $\partial_e\mathcal S(M)$ denote the set of extremal states. In addition, in view of \cite{156}, a state $s$ is extremal iff $\Ker(s)$ is a maximal ideal of $M$ iff $s(a\wedge b)=\min\{s(a),s(b)\}$. Or equivalently, $s$ is a {\it state morphism}, i.e., $s$ is a homomorphism from $M$ into the MV-algebra $\Gamma(\mathbb R,1)$. In addition, a normal ideal $I$ is maximal iff $I=\Ker(s)$ for some extremal state $s$.

We say that a net of states $\{s_\alpha\}_\alpha$ {\it converges weakly} to a state $s$ if $s(a)= \lim_\alpha s_\alpha(a)$, $a \in M$. Then $\mathcal S(M)$ and $\partial_e\mathcal S(M)$ are compact Hausdorff topological spaces, in particular cases both can be empty, and due to the Krein-Mil'man Theorem \cite[Thm 5.17]{Goo}, every state on $M$ is a weak limit of a net of convex combinations of extremal states.

Pseudo MV-algebras can be studied also in the frames of pseudo effect algebra which are a non-commutative generalization of effect algebras introduced by \cite{FoBe}.

According to \cite{DvVe1, DvVe2}, a partial algebraic structure
$(E;+,0,1),$  where $+$ is a partial binary operation and 0 and 1 are constants, is called a {\it pseudo effect algebra} if, for all $a,b,c \in E,$ the following hold:
\begin{itemize}
\item[{\rm (PE1)}] $ a+ b$ and $(a+ b)+ c $ exist if and only if $b+ c$ and $a+( b+ c) $ exist, and in this case,
$(a+ b)+ c =a +( b+ c)$;

\item[{\rm (PE2)}] there are exactly one  $d\in E $ and exactly one $e\in E$ such
that $a+ d=e + a=1$;

\item[{\rm (PE3)}] if $ a+ b$ exists, there are elements $d, e\in E$ such that
$a+ b=d+ a=b+ e$;

\item[{\rm (PE4)}] if $ a+ 1$ or $ 1+ a$ exists,  then $a=0.$
\end{itemize}

If we define $a \le b$ if and only if there exists an element $c\in
E$ such that $a+c =b,$ then $\le$ is a partial ordering on $E$ such
that $0 \le a \le 1$ for any $a \in E.$ It is possible to show that
$a \le b$ if and only if $b = a+c = d+a$ for some $c,d \in E$. We
write $c = a \minusre b$ and $d = b \minusli a.$ Then

$$ (b \minusli a) + a = a + (a \minusre b) = b,
$$
and we write $a^- = 1 \minusli a$ and $a^\sim = a\minusre 1$ for any
$a \in E.$

If $(G,u)$ is a unital po-group, then $(\Gamma(G,u);+,0,u),$ where
the set $\Gamma(G,u):=\{g\in G: 0\le g \le u\}$ is endowed with the restriction of the group addition $+$ to $\Gamma(G,u)$ and with $0$ and $u$ as $0$ and $1$, is a pseudo effect algebra. Due to \cite{DvVe1, DvVe2}, if a pseudo effect algebra satisfies a special type of the Riesz Decomposition Property, RDP$_1$, then every pseudo effect algebra is an interval in some unique (up to isomorphism of unital po-groups) $(G,u)$ satisfying also RDP$_1$ such that $M \cong \Gamma(G,u)$.

We say that a mapping $f$ from one pseudo effect algebra $E$ onto a second one $F$ is a {\it homomorphism} if (i) $a,b\in E$ such that $a+b$ is defined in $E$, then $f(a)+f(b)$ is defined in $F$ and $f(a+b)=f(a)+f(b)$, and (ii) $f(1)=1$.

We say that a pseudo effect algebra $E$ satisfies RDP$_2$ property if $a_1+a_2=b_1+b_2$ implies that there are four elements $c_{11},c_{12},c_{21},c_{22}\in E$ such that (i) $a_1 = c_{11}+c_{12},$ $a_2= c_{21}+c_{22},$ $b_1= c_{11} + c_{21}$ and $b_2= c_{12}+c_{22}$, and (ii) $c_{12}\wedge c_{21}=0.$

In \cite[Thm 8.3, 8.4]{DvVe2}, it was proved that if $(M; \oplus,^-,^\sim,0,1)$ is a  pseudo MV-algebra, then $(M;+,0,1),$ where $+$ is defined by (2.1), is a pseudo effect algebra with RDP$_2.$  Conversely, if $(E; +,0,1)$ is a pseudo effect algebra with RDP$_2,$ then $E$ is a lattice, and by \cite[Thm 8.8]{DvVe2}, $(E; \oplus,^-,^\sim,0,1),$ where
$$
a\oplus b := (b^-\minusli (a\wedge b^-))^\sim,\eqno(2.2)
$$
is a pseudo MV-algebra. In addition, a pseudo effect algebra $E$ has RDP$_2$ iff $E$ is a lattice and $E$ satisfies RDP$_1,$ see \cite[Thm 8.8]{DvVe2}.

Finally, we note that an {\it ideal} of a pseudo effect algebra $E$ is a non-empty subset $I$ such that (i) if $x,y\in I$ and $x+y$ is defined in $E$, then $x+y \in I$, and (ii) $x\le y \in I$ implies $x\in I$. An ideal $I$ is {\it normal} if $a+I:=\{a+i: i \in I$ if $a+i$ exists in $E\}=I+a:=\{j+a: j \in I\}$ for any $a\in E$. A maximal ideal is defined in a standard way. If $M$ is a pseudo MV-algebra, then the ideal $I$ of $M$ is also an ideal when $M$ is understood as a pseudo effect algebra; this follows from the fact $x\oplus y = (x\wedge y^-)+y$.

We note that a mapping from a pseudo effect algebra $E$ into a pseudo effect algebra $F$ is a {\it homomorphism} if (i) $a+b\in E$ implies $h(a)+h(b)\in F$ and $h(a+b)=h(a)+h(b)$, and (ii) $h(1)=1$. A bijective mapping $h:E\to F$ is an {\it isomorphism} if both $h$ and $h^{-1}$ are homomorphisms of pseudo effect algebras.

\section{$(H,u)$-Perfect Pseudo MV-algebras}

Generalizing ideas from \cite{DiLe1, DDT, Dv08, 264}, we introduce the basic notions of our paper.

If $(H,u)$ is a unital $\ell$-group, we set $[0,u]_H:=\{h\in H: 0\le h\le u\}.$

\begin{definition}\label{de:3.1}
{\rm Let $(H,u)$ be a linearly ordered group and let $u$ belong to the commutative center $C(H)$ of $H$.
We say that a pseudo MV-algebra $M$ is $(H,u)$-{\it perfect}, if there is a system $(M_t: t \in [0,u]_H)$ of nonempty subsets of $M$ such that it is an $(H,u)$-{\it decomposition} of $M,$ i.e. $M_s \cap M_t= \emptyset$ for $s<t,$ $s,t \in [0,u]_ H$ and $\bigcup_{t\in [0,u]_ H}M_t = M$, and \begin{enumerate}

\item[{\rm (a)}]
$M_s \leqslant M_t$ for all $ s<t$, $s,t \in [0,u]_ H$;

\item[{\rm (b)}]
$M_t^- = M_{u-t} =M_t^\sim$ for each $t \in [0,u]_H$;

\item[{\rm (c)}]  if $x \in M_v$ and $y \in M_t$, then
$x\oplus y \in M_{v\oplus t},$ where $v\oplus t=\min\{v+t,u\}.$

\end{enumerate}}
\end{definition}

We note that  (a) can be written equivalently in a stronger way:  if $s<t$ and $a \in M_s$ and $b \in M_t$, then $a<b$. Indeed, by (b) we have $a\le b$. If $a=b$, then $a\in M_s\cap M_t=\emptyset,$ which is absurd. Hence, $a<b$.

In addition, (i) if $(H,u)=(\mathbb Z,1)$ and $M$ is an MV-algebra, we are speaking on  a {\it perfect} MV-algebra, \cite{DiLe1}, (ii)  if $(\mathbb H,u)=(\frac{1}{n}\mathbb Z,1),$ a $(\frac{1}{n}\mathbb Z,1)$-perfect pseudo MV-algebra is said to be $n$-{\it perfect}, see \cite{Dv08}, (iii) if $\mathbb H$ is a subgroup of the group of real numbers $\mathbb R$, such that $1 \in \mathbb H$, $(\mathbb H,1)$-perfect pseudo MV-algebras are in \cite{264} called $\mathbb H$-{\it perfect} pseudo MV-algebras.

For example, let
$$
M =\Gamma(H\lex G, (u,0)), \eqno(3.1)
$$
where $u \in C(H)$. We set $M_0=\{(0,g): g \in G^+\},$ $M_u:=\{(u,-g): g \in G^+\}$ and
for  $t \in [0,u]_ H\setminus \{0,u\},$ we define $M_t:=\{(t,g): g \in G\}.$ Then $(M_t: t \in [0,u]_H)$ is an $(H,u)$-decomposition of $M$ and $M$ is an $(H,u)$-perfect pseudo MV-algebra.


As a matter of interest, if $O$ is the zero group, then $\Gamma(O\lex G, (0,0))$ is a one-element pseudo MV-algebra. The pseudo MV-algebra $\Gamma(\mathbb Z\lex O,(1,0))$ is a two-element Boolean algebra. In general, $\Gamma(H\lex O,(u,0))\cong \Gamma(H,u)$ and it is semisimple (that is, its radical is a singleton) iff $H$ is Archimedean. If $G \ne O\ne H$, $\Gamma(H\lex G,(u,0))$ is not semisimple.

\begin{theorem}\label{th:3.2}
Let $M =(M_t: t \in [0,u]_ H)$ be an  $(H,u)$-perfect pseudo MV-algebra.

\begin{enumerate}

\item[{\rm (i)}] Let $a \in M_v,$  $b \in M_t$. If $v+t < u$, then
$ a+b $ is defined in $M$ and $a+b \in M_{v+t}$; if $a+b$ is defined
in $M$, then $v+t\le u$. If $a+b$ is defined in $M$ and $v+t=u$, then $a+b \in M_u$.

\item[{\rm (ii)}]
$M_v + M_t$ is defined in $M$ and $M_v + M_t = M_{v+t}$ whenever $v+t < u$.

\item[{\rm (iii)}] If $a \in M_v$ and $b \in M_t$, and
$v+t > u$, then $a+b$ is not defined in $M.$

\item[{\rm (iv)}] If $a \in M_v$ and $b \in M_t$, then $a\vee b \in M_{v\vee t}$ and $a\wedge b \in M_{v\wedge t}.$

\item[{\rm (v)}]
$M$ admits a state $s$ such that  $M_0 \subseteq \Ker(s)$.

\item[{\rm (vi)}]  $M_0$ is a normal ideal
of $M$ such that $M_0 + M_0 = M_0$ and $M_0\subseteq \Infinit(M).$

\item[{\rm (vii)}] The quotient pseudo MV-algebra $M/M_0\cong \Gamma(H,u).$

\item[{\rm (viii)}]
Let $M = (M'_t: t \in [0,u]_ H)$
be another $(H,u)$-decomposition of $M$ satisfying {\rm (a)--(c)} of Definition {\rm \ref{de:3.1}}, then
$M_t = M_t'$ for each $t \in [0,u]_ H.$

\item[{\rm (ix)}] $M_0$ is a prime ideal of $M$.

\end{enumerate}
\end{theorem}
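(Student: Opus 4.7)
The overall plan is to package the slicing as a pseudo MV-algebra homomorphism and let kernel/quotient theory dispose of most parts. Work in the ambient unital $\ell$-group $(U,u')$ with $M=\Gamma(U,u')$; the identity $0^-=u'$ together with (b) pins down $0\in M_0$ and $u'\in M_u$. The central object will be
\[
\pi:M\to\Gamma(H,u),\qquad \pi(x)=t\text{ iff }x\in M_t,
\]
well-defined and surjective by the disjointness and covering hypotheses. The strategy is to verify $\pi$ is a pseudo MV-algebra homomorphism with $\ker\pi=M_0$; at that point most of the nine claims fall out.

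I would begin with the arithmetic claims (i)--(iii). For (i), (c) puts $a\oplus b\in M_{v\oplus t}$; when $v+t<u$, disjointness of $M_{v+t}$ from $M_u\ni u'$ forces $a\oplus b\ne u'$, so from $a\oplus b=(a+b)\wedge u'$ in $U$ we get $a+b<u'$, meaning $a+b$ is defined in $M$ and equals $a\oplus b\in M_{v+t}$. If $v+t>u$, (b) gives $b^-\in M_{u-t}$ with $u-t<v$, and the strict form of (a) (the remark after Definition \ref{de:3.1}) yields $b^-<a$, i.e.\ $u'-b<a$, so $a+b>u'$ is not defined---this simultaneously delivers (iii) and the bound $v+t\le u$ in (i). The boundary case $v+t=u$ with $a+b$ defined falls out of $a+b=a\oplus b\in M_u$. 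Then (ii) is (i) combined with a reverse inclusion: for $c\in M_{v+t}$ fix any $a\in M_v$ and take $b:=-a+c$ in $U$; then $0\le b\le u'$ so $b\in M$, and (i) applied to $(a,b)$ forces the slice of $b$ to be $t$.

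Verifying that $\pi$ is a homomorphism is now immediate: (c) gives $\pi(x\oplus y)=\pi(x)\oplus\pi(y)$, (b) gives $\pi(x^-)=u-\pi(x)=\pi(x^\sim)$, and $\pi(0)=0$, $\pi(1)=u$ hold. The remaining parts all follow. (vi): $M_0=\ker\pi$ is automatically a normal ideal; $M_0+M_0=M_0$ from (i) with $v=t=0$; and $M_0\subseteq\Infinit(M)$ by iterating (i) to define $na\in M_0$ for every $n\ge 1$. (vii): the homomorphism theorem yields $M/M_0\cong\Gamma(H,u)$. (iv): since $\vee$ and $\wedge$ are definable from $\oplus$, $^-$, $^\sim$ via (A6) and (A7), $\pi$ preserves them, so $\pi(a\vee b)=v\vee t$ and $\pi(a\wedge b)=v\wedge t$. (ix): from (iv), $a\wedge b\in M_0$ forces $v\wedge t=0$, and linearity of $H$ gives $v=0$ or $t=0$. (v): compose $\pi$ with the unique state on the linearly ordered $\Gamma(H,u)$ from \cite[Thm 5.5]{156} to obtain $s:=\sigma\circ\pi$ with $M_0\subseteq\Ker(s)$.

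The main obstacle is (viii). Given a second decomposition $(M'_t)$ with homomorphism $\pi'$, the first step is to show $M_0=M'_0$: assuming $z\in M_0\setminus M'_0$ with $z\in M'_{t_0}$, $t_0>0$, any $y\in M'_0$ satisfies $y<z$ by the strict form of (a) in the second decomposition, while the strict form in the first decomposition forces $y\in M_0$ (otherwise $y>z$); hence $M'_0\subseteq M_0$, and the symmetric argument gives equality. The delicate step is upgrading $M_0=M'_0$ to $M_t=M'_t$ for every $t$, since the two induced isomorphisms $M/M_0\cong\Gamma(H,u)$ may a priori differ by an element of $\Aut(\Gamma(H,u))$; the argument must exploit the coset description $M_t=a+M_0$ coming from (ii), the strict form of (a), and the hypothesis $u\in C(H)$ to rule out a nontrivial twist between the two slicings.
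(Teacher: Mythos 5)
Your overall strategy --- packaging the decomposition as a surjective pseudo MV-algebra homomorphism $\pi:M\to\Gamma(H,u)$ and reading off (iv)--(vii) and (ix) from the facts that homomorphisms preserve term operations and have normal ideals as kernels --- is sound, and it slightly streamlines the paper's treatment, which proves (iv), (vi), (vii) by direct computation and gets (ix) by citing primeness of normal ideals with linearly ordered quotient. But there are two genuine gaps. First, in (i) you deduce that $a+b$ is defined when $v+t<u$ from ``$a\oplus b\ne u'$, hence $(a+b)\wedge u'\ne u'$, hence $a+b<u'$''. That last inference fails in a non-linearly ordered $\ell$-group: $(a+b)\wedge u'\ne u'$ only says $a+b\not\ge u'$, whereas definedness of $a+b$ requires $a\odot b=0$, i.e.\ $a\le b^\sim$, equivalently $a+b\le u'$. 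The correct route is the one you yourself use for (iii) and the one the paper uses: $v<u-t$ and $b^\sim\in M_{u-t}$, so the strict form of (a) gives $a< b^\sim$ and hence $a\odot b=0$. Since (ii), (vi) and the homomorphism property of $\pi$ all lean on (i), this must be repaired, though the repair is immediate from material you already deploy.

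Second, you do not actually prove (viii): after correctly establishing $M_0=M_0'$ (by the same comparability argument as the paper), you only describe what an argument ``must exploit'' to rule out a relabeling of the slices, without supplying it. That exclusion is the entire content of (viii) beyond $M_0=M_0'$. The paper's own proof finishes by observing that $M_0=M_0'$ makes the congruences $\sim_{M_0}$ and $\sim_{M_0'}$ coincide, so the two families are the same partition of $M$ into classes, and then asserts that the indices agree ($t=t'$). Your instinct that this last identification is delicate is correct: the two labelings differ by a unital automorphism $\sigma$ of $\Gamma(H,u)$ determined by $M_t=M'_{\sigma(t)}$, and one must show $\sigma=\mathrm{id}$. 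This is clear when $(H,u)$ is Archimedean (H\"older), but for a general linearly ordered $(H,u)$ --- e.g.\ $\mathbb Q\lex\mathbb Q$ with unit $(1,0)$, which admits the nontrivial unital o-automorphism $(a,b)\mapsto(a,2b)$ --- the conclusion does not follow from $M_0=M_0'$ alone, so a proof of (viii) cannot stop where yours does; some additional input (or an Archimedean hypothesis) is needed to kill the twist.
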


\begin{proof}
(i)  Assume $a\in M_v$ and $b \in M_t$. If
$v+t< u$, then $b^- \in M_{u-t}$, so that $a \le b^-$, and $a+b$ is
defined in $M.$  Conversely, let $a+b$ be defined, then $a\le b^-\in
M_{u-t}$. If $v+t>u$, then $v>u-t$ and $a>b^-\ge a$ which is absurd, and
this gives $v+t\le u$.  Now let $v+t=u$ and $a+b$ be defined in $M$. By (c) of Definition \ref{de:3.1}, we have $a+b \in M_u$.

(ii) By (i), we have  $M_v +M_t \subseteq M_{v+t}$.
Suppose $z \in M_{v+t}$.  Then, for any $x\in M_v,$ we have $x \le z$.
Hence, $y = z- x$ is defined in $M$ and $y \in M_w$ for
some $w \in [0,u]_ H.$ Since $z = y+x \in M_{v+t}\cap M_{v+w}$,  we conclude $t=w$ and $M_{v+t} \subseteq M_v + M_t.$

(iii) It follows from (i).

(iv) Inasmuch as $x\wedge y = (x\oplus y^\sim)-y^\sim$, we have by (c) of Definition \ref{de:3.1}, $(x\oplus y^\sim)-y^\sim \in M_s$, where $s =((v+u-t)\wedge u) -(u-t)= v\wedge t.$ Using a de Morgan law and property (d), we have $x\vee y \in M_{v \vee t}.$

(v)  Let $s_0$ be a unique state on $\Gamma(H,u)$ which exists in view of \cite[Thm 5.5]{156}. Define a mapping $s:\ M \to [0,1]$ by $s(x) = s_0(t)$ if $x \in M_t$.  It is clear that $s$ is a well-defined mapping.  Take $a,b \in M$ such
that $a+ b$ is defined in $M$.  Then there are unique indices $v$ and $t$ such that $a\in M_v$ and $b \in M_t$. By (i), $v+t \le u$
and  $a+b \in M_{v+t}$. Therefore, $s(a+b) = s_0(v+t)  = s_0(v)+s_0(t)= s(a) + s(b).$ It is evident that $s(1) = 1$ and   $M_0\subseteq \Ker(s)$.

(vi) It is clear that $M_0$ is an ideal of $M$. To prove the normality of $M_0$, take $x\in M_v$ and $y\in M_0$.  Then $x\oplus y = ((x\oplus y)-x)+x\in M_v$ which implies by (i)--(ii) $(x\oplus y)-x \in M_0$ and $x\oplus M_0\subseteq M_0\oplus x$. In the same way we proceed for the second implication.

Since $M_0+M_0=M_0,$ by (ii) we have $M_0 \subseteq \Infinit(M).$

(vii)  Since by (vi) $M_0$ is a normal ideal, $M/M_0$ is a pseudo MV-algebra, too. Using (iv), it is easy to verify that $x\sim_{M_0} y$ iff there is an $h \in [0,u]_H$ such that $x,y \in M_h$. We define a mapping $\phi:M/M_0\to \Gamma(H,u)$ by $\phi(x)=h$ iff $x \in M_h$ for some $h \in [0,u]_H.$ The mapping $\phi$ is an isomorphism from $M/M_0$ onto $\Gamma(H,u)$.

(viii)  Let $M = (M_t': t \in [0,u]_ H)$ be another $(H,u)$-decomposition of $M$. We assert  $M_0=M_0'$. If not, there are $x\in M_0\setminus M_0'$ and $y\in M_0' \setminus M_0$.  By (a), we have $x<y$ as well as $y<x$ which is absurd. Hence, $M_0=M_0'$. By (vi), $M_0$ is normal and by (vii), $M_0\cong \Gamma(H,u)\cong M/M_0'$. If $x\sim_{M_0} y$, then $x,y \in M_h$ for some $h \in [0,u]_H$, as well as $x\sim_{M_0'} y$ implies $x,y \in M_{t'}$ and $t=t'$ which implies $M_t = M_t'$ for any $t \in [0,1]_H$.

(ix) By (vii), $M/M_0\cong \Gamma(H,u)$, so that $M/M_0$ is a linear pseudo MV-algebra. Applying \cite[Thm 6.1]{156}, we conclude that the normal ideal $M_0$ is prime.
\end{proof}

\begin{example}\label{ex:3.3}
We define MV-algebras: $M_1=\Gamma(\mathbb Z \lex (\mathbb Z\lex \mathbb Z),(1,(0,0)))$, $M_2 = \Gamma((\mathbb Z \lex \mathbb Z)\lex \mathbb Z,((1,0),0))$, and $M=\Gamma(\mathbb Z \lex \mathbb Z\lex \mathbb Z,(1,0,0))$ which are  mutually isomorphic. The first one is $(\mathbb Z,1)$-perfect, the second one is $(\mathbb Z\lex \mathbb Z,(1,0))$-perfect and of course, the linear unital $\ell$-groups $(\mathbb Z,1)$ and $(\mathbb Z \lex \mathbb Z,(1,0))$ are not isomorphic while the first one is Archimedean and the second one is not Archimedean. Both pseudo MV-algebras define the corresponding natural $(\mathbb Z,1)$-decomposition $(M^1_t)_t$ and $(\mathbb Z \lex \mathbb Z,(1,0))$-decompositions $(M^2_q)_q$ of $M_1$ and $M_2$, respectively. Then $M^1_0=\{(0,(0,m)): m \ge 0\}\cup \{(0,(n,m)): n>0, m\in \mathbb Z\}=\Ker(s_1)=\Infinit(M_1)$, where $s_1$ is a unique state on $M_1$; it is two-valued. But $M^2_0=\{((0,0),m): m\ge0\}  \subset \Ker(s_2)= \Infinit(M_2)$, where $s_2$ is a unique state on $M_2$, it vanishes only on $\Infinit(M_2)$; it is two-valued.
\end{example}

In what follows, we show that the normal ideal $M_0$ of an $(H,u)$-decomposition $(M_t: t \in [0,u]_H)$ of an $(H,u)$-perfect pseudo MV-algebra is maximal iff $(H,u)$ is isomorphic with $(\mathbb H,1)$, where $\mathbb H$ is a subgroup of the group of reals $\mathbb R$ with natural order, and $1 \in \mathbb H$.

\begin{theorem}\label{th:3.4}
Let $(M_t: t \in [0,u]_H)$ be an $(H,u)$-decomposition of an $(H,u)$-perfect pseudo MV-algebra $M$.  The following statements are equivalent:
\begin{enumerate}
\item[{\rm (i)}] $M_0$ is maximal.
\item[{\rm (ii)}] $(H,u)$ is isomorphic to some $(\mathbb H,1)$, where $\mathbb H$ is a subgroup of the group $\mathbb R$ and $1 \in \mathbb H$.
\item[{\rm (iii)}] $M$ possesses a unique state $s$ and $M_0=\Ker(s)$.
\end{enumerate}
\end{theorem}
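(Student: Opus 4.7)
The plan is to route everything through the isomorphism $M/M_0\cong \Gamma(H,u)$ of Theorem~\ref{th:3.2}(vii), combined with the inclusion $M_0\subseteq \Infinit(M)$ of Theorem~\ref{th:3.2}(vi) and the standard fact that every state vanishes on infinitesimals. I would prove (i)$\Leftrightarrow$(ii) first and then handle (i)$\Rightarrow$(iii)$\Rightarrow$(i) using the uniqueness of the state on a linearly ordered pseudo MV-algebra.

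For (i)$\Leftrightarrow$(ii), the key observation is that $M_0$ is maximal in $M$ precisely when $M/M_0\cong \Gamma(H,u)$ is a simple pseudo MV-algebra. By the categorical equivalence of Theorem~\ref{th:2.1}, ideals of $\Gamma(H,u)$ correspond bijectively to convex $\ell$-subgroups of $H$ below $u$, so simplicity is equivalent to $H$ having no proper convex $\ell$-subgroup. Since $H$ is linearly ordered, this condition is in turn equivalent, by the classical structure theory, to $H$ being Archimedean; and H\"older's theorem then supplies an order embedding of $H$ into $(\mathbb R,+)$ sending $u$ to some positive real, which after rescaling may be taken to be $1$. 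This realises $(H,u)$ as $(\mathbb H,1)$ for a subgroup $\mathbb H\subseteq \mathbb R$ containing $1$. The converse is immediate, since $\Gamma(\mathbb H,1)$ is simple for any such $\mathbb H$.

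For (i)$\Rightarrow$(iii), I would use that $M_0$ is normal by Theorem~\ref{th:3.2}(vi), so by the characterization of extremal states as kernels of maximal normal ideals recalled in the preliminaries, maximality of $M_0$ yields an extremal state $s$ with $\Ker(s)=M_0$. For uniqueness, given any state $s'$ on $M$, the inequality $s'(na)\le 1$ for all $n\ge 1$ forces $s'$ to vanish on $\Infinit(M)$; combined with $M_0\subseteq \Infinit(M)$ and $\Ker(s')\ne M$, maximality of $M_0$ gives $\Ker(s')=M_0$. Both $s$ and $s'$ then descend to states on $M/M_0\cong \Gamma(H,u)$, which is linearly ordered (Theorem~\ref{th:3.2}(ix)) and hence admits a unique state by \cite[Thm 5.5]{156}; thus $s=s'$. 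For (iii)$\Rightarrow$(i), a singleton state space $\mathcal S(M)=\{s\}$ is trivially a set of extremal states, so $\Ker(s)=M_0$ is a maximal normal ideal.

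The main obstacle I anticipate is the $\ell$-group-theoretic step in (i)$\Rightarrow$(ii): cleanly tying the absence of non-trivial ideals of $\Gamma(H,u)$ to the absence of proper convex $\ell$-subgroups of $H$, and then invoking H\"older's theorem in a setting where $H$ is only assumed linearly ordered with $u\in C(H)$ rather than outright abelian (though in fact the lack of proper convex subgroups will force abelianness). The remaining state-theoretic manipulations reduce to standard facts already developed in the preliminaries.
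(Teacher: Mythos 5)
Your proposal is correct, and for the legs (i)$\Leftrightarrow$(ii) and (iii)$\Rightarrow$(i) it coincides with the paper's argument: the paper also passes through ``$M_0$ maximal iff $M/M_0\cong\Gamma(H,u)$ is simple,'' reads off that $(H,u)$ is linear, Archimedean (and hence Abelian), and invokes H\"older; and it also gets (iii)$\Rightarrow$(i) from the correspondence between extremal states and maximal normal ideals. Your worry about $H$ only satisfying $u\in C(H)$ is resolved exactly as you predict, since H\"older's theorem for Archimedean linearly ordered groups already forces commutativity. The one place you genuinely diverge is the third vertex of the cycle: the paper proves (ii)$\Rightarrow$(iii) by outsourcing the uniqueness of the state to an external theorem on $(\mathbb H,1)$-perfect algebras from \cite{264}, whereas you prove (i)$\Rightarrow$(iii) directly --- producing the extremal state from maximality of the normal ideal $M_0$, showing every state kills $\Infinit(M)\supseteq M_0$ so that its kernel is forced up to $M_0$ by maximality, and then quoting the uniqueness of the state on the linearly ordered quotient $\Gamma(H,u)$ from \cite[Thm 5.5]{156}. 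This buys a self-contained argument using only facts already recalled in the paper's preliminaries and Theorem \ref{th:3.2}(vi)--(vii), at the cost of a slightly longer state-theoretic computation; the paper's route is shorter but leans on the earlier paper. Both yield the full equivalence, since your implication graph (i)$\Leftrightarrow$(ii), (i)$\Rightarrow$(iii), (iii)$\Rightarrow$(i) is logically complete.
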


\begin{proof}
(i) $\Rightarrow$ (ii). By \cite[Prop 3.4-3.5]{156}, $M_0$ is maximal iff $M/M_0$ is simple, i.e. it does not contain any  non-trivial proper ideal. By (vii) of Theorem \ref{th:3.2}, $(M/M_0,u/M_0)\cong \Gamma(H,u)$ which means by Theorem \ref{th:2.1} that $(H,u)$ is a linear, Archimedean and Abelian unital $\ell$-group, and by H\"older's theorem, \cite[Thm XIII.12]{Bir} or \cite[Thm IV.1.1]{Fuc}, it is isomorphic to some $(\mathbb H,1)$, where $\mathbb H$ is a subgroup of $\mathbb R$ and $1 \in \mathbb H$.

(ii) $\Rightarrow$ (iii). If $(H,u)\cong (\mathbb H,1)$, where $\mathbb H$ is a subgroup of $\mathbb R$ and $1 \in \mathbb H$, then $M$ is isomorphic to an $(\mathbb H,1)$-perfect pseudo MV-algebra. This kind of pseudo MV-algebras was studied in \cite{264}, and by \cite[Thm 3.2(iv)]{264}, $M$ possesses a unique state $s$. This state has the property $s(M)=\mathbb H$ and $\Ker (s)=M_0$.

(iii) $\Rightarrow$ (i). If $s$ is a unique state of $M$ and $M_0=\Ker(s)$, by \cite{156}, $M_0$ is a normal and maximal ideal of $M$.
\end{proof}

\begin{remark}\label{re:3.5}
{\rm We note that in Corollary \ref{co:state2} it will be shown that if an $(H,u)$-perfect pseudo MV-algebra $M$ is of a stronger form, namely a strong $(H,u)$-perfect pseudo MV-algebra introduced in the next section, then $M$ has a unique state. In general, the uniqueness of a state for any $(H,u)$-perfect pseudo MV-algebra is unknown.}
\end{remark}

{\rm 


We note that there is uncountably many non-isomorphic unital  $\ell$-subgroups $(\mathbb H,1)$ of the unital group $(\mathbb R,1)$. By \cite[Lem 4.21]{Goo}, every $\mathbb H$ is either {\it cyclic}, i.e. $\mathbb H=\frac{1}{n}\mathbb Z$ for some $n \ge 1$ or $\mathbb H$ is dense in $\mathbb R.$

Therefore, if $\mathbb H= \mathbb H(\alpha)$ is a subgroup of $\mathbb R$ generated by $\alpha \in [0,1]$ and $1,$ then $\mathbb H = \frac{1}{n}\mathbb Z$ for some integer $n\ge 1$ if $\alpha$ is a rational number. Otherwise, $\mathbb H(\alpha)$ is countable and dense in $\mathbb R,$ and $M(\alpha):= \Gamma(\mathbb H(\alpha),1)=\{m+n\alpha: m,n \in \mathbb Z,\ 0\le m+n\alpha \le 1\},$ see \cite[p. 149]{CDM}. Therefore, we have uncountably many non-isomorphic $(\mathbb H,1)$-perfect pseudo MV-algebras.

\section{Representation of Strong $(H,u)$-perfect Pseudo MV-algebras}

In accordance with \cite{264}, we introduce the following notions and generalize results from \cite{264} for strong $(H,u)$-perfect pseudo MV-algebras. Our aim is to find an algebraic characterization of pseudo MV-algebras that can be represented in the form of the lexicographic product
$$
\Gamma(H\lex G, (u,0)),
$$
where $(H,u)$ is a linearly ordered Abelian unital $\ell$-group and $G$ is an $\ell$-group not necessarily Abelian. In \cite{264}, we have studied a particular case when $(H,u)= (\mathbb H,1)$, where $\mathbb H$ is a subgroup of reals.


We say that a pseudo MV-algebra $M$ enjoys
{\it unique  extraction of roots of $1$} if $a,b \in M$ and $n a, nb$ exist in $M$, and $n a=1= n b$, then $a= b.$ Every linearly ordered pseudo MV-algebra enjoys due to Theorem \ref{th:2.1} and \cite[Lem 2.1.4]{Gla}  unique extraction of roots. In addition, every pseudo MV-algebra $\Gamma(H \lex G,(u,0))$, where $(H,u)$ is a linearly ordered $\ell$-group, enjoys unique  extraction of roots of $1$ for any $n\ge 1$
and for any $\ell$-group $G$.  Indeed, let $k(s,g) = (u,0)=k(t,h)$ for some $s,t \in [0,u]_ H,$ $g,h \in G$, $k\ge 1.$ Then $ks=u=kt$ which yields $s=t >0$, and $kg=0=kh$ implies $g=0=h.$

Let $n\ge 1$ be an integer. An element $a$ of a pseudo MV-algebra $M$ is said to be {\it cyclic of order} $n$ or simply {\it cyclic} if $na$ exists in $M$ and $na =1.$ If $a$ is a cyclic element of order $n$, then $a^- = a^\sim$, indeed, $a^- = (n-1) a = a^\sim$. It is clear that $1$ is a cyclic element of order $1.$

Let $M=\Gamma(G,u)$ for some unital $\ell$-group $(G,u).$ An element
$c\in M$ such that (a) $nc=u$ for some integer $n \ge 1,$ and (b) $c\in   C(G),$ where $C(G)$ is a commutative center of $G,$
is said to be a {\it strong cyclic element of order $n$.}

We note that if  $\mathbb H$ is a subgroup of reals and $t=1/n,$ then $c_\frac{1}{n}$ is a strong cyclic element of order $n.$

For example, the pseudo MV-algebra $M:=\Gamma(\mathbb Q \lex G, (1,0)),$ where $\mathbb Q$ is the group of rational numbers, for every integer $n\ge 1,$ $M$ has a unique cyclic element of order $n,$ namely $a_n =(\frac{1}{n},0).$ The pseudo MV-algebra $\Gamma(\frac{1}{n}\mathbb Z, (1,0))$ for a prime number $n\ge 1,$ has  the only cyclic element of order $n,$ namely $(\frac{1}{n},0).$ If $M=\Gamma(G,u)$ and $G$ is a representable $\ell$-group, $G$ enjoys unique extraction of roots of $1,$ therefore, $M$ has at most one cyclic element of order $n.$ In general, a pseudo MV-algebra $M$ can have two different cyclic elements of the same order. But if $M$ has a strong cyclic element of order $n,$ then it has a unique strong cyclic element of order $n$ and a unique cyclic element of order $n,$ \cite[Lem 5.2]{DvKo}.

We say that an $(H,u)$-decomposition $(M_t: t\in [0,u]_H)$ of $M$ has the {\it cyclic property} if there is a system of elements $(c_t\in M: t \in [0,u]_ H)$ such that (i) $c_t \in M_t$ for any $t \in [0,u]_ H,$ (ii) if $v+t \le 1,$ $v,t \in [0,u]_ H,$ then $c_v+c_t=c_{v+t},$ and (iii) $c_1=1.$ Properties: (a) $c_0=0;$ indeed, by (ii) we have $c_0+c_0=c_0,$ so that $c_0=0.$ (b) If $t=1/n,$ then $c_\frac{1}{n}$ is a cyclic element of order $n.$

Let $M =\Gamma(G,u),$ where $(G,u)$ is a unital $\ell$-group. An $(H,u)$-decomposition $(M_t: t\in [0,u]_ H)$ of $M$ has the {\it strong cyclic property} if there is a system of elements $(c_t\in M: t \in [0,u]_ H)$, called an $(H,u)$-{\it strong cyclic family}, such that

\begin{enumerate}
\item[{\rm (i)}] $c_t \in M_t\cap C(G)$ for each $t \in [0,u]_ H$;
\item[{\rm (ii)}] if $v+t \le 1,$ $v,t \in [0,u]_ H,$ then $c_v+c_t=c_{v+t}$;
\item[{\rm (iii)}] $c_1=1.$
\end{enumerate}

For example, let $M=\Gamma(H \lex G,(u,0)),$ where $(H,u)$ is an Abelian linearly ordered unital $\ell$-group and $G$ is an $\ell$-group (not necessarily Abelian), and $M_t =\{(t,g): (t,g)\in M\}$ for $t \in [0,u]_ H.$ If we set $c_t =(t,0),$ $t \in [0,u]_ H,$ then the system $(c_t: t \in [0,u]_ H)$ satisfies (i)---(iii) of the strong cyclic property, and $(M_t: t \in [0,u]_ H)$ is an $(H,u)$-decomposition of $M$ with the strong cyclic property.

Finally, we say that a pseudo MV-algebra $M$ is {\it strong} $(H,u)$-{\it perfect} if there is an  $(H,u)$-decomposition $(M_t: t \in [0,u]_ H)$ of $M$ with the strong cyclic property.

A prototypical example of a strong $(H,u)$-perfect pseudo MV-algebra is the following.

\begin{proposition}\label{pr:3.4}
Let $G$ be an  $\ell$-group and $(H,u)$ an Abelian unital $\ell$-group. Then  the pseudo MV-algebra
$$
\mathcal M_{H,u}(G):=\Gamma(H \lex G,(u,0)) \eqno(4.1)
$$
is a strong $(H,u)$-perfect pseudo MV-algebra with a strong cyclic family $((h,0): h \in [0,u]_H)$.
\end{proposition}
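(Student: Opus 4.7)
The plan is to exhibit an explicit $(H,u)$-decomposition of $M := \mathcal M_{H,u}(G)$ together with a candidate strong cyclic family, and then verify the conditions (a)--(c) of Definition \ref{de:3.1} and (i)--(iii) of the strong cyclic property by direct computation in the lexicographic product $H\lex G$. Everything will be essentially routine; the only mild subtlety is to split off the boundary slices $M_0$ and $M_u$ so that they land inside the interval $\Gamma(H\lex G,(u,0))$.

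Concretely, I will set
\[
M_0 := \{(0,g) : g \in G^+\}, \quad M_u := \{(u,-g) : g \in G^+\}, \quad M_t := \{(t,g) : g \in G\} \text{ for } 0<t<u,
\]
and take $c_t := (t,0)$ for every $t \in [0,u]_H$. Pairwise disjointness of the slices is automatic from comparing first coordinates, and their union exhausts $M$ by the explicit description of $\Gamma(H\lex G,(u,0))$ recalled in Section~2. Condition (a) of Definition \ref{de:3.1} is immediate from the lexicographic order. For (b), since $H$ is Abelian one has $(u,0)+(t,g)=(t,g)+(u,0)$, so $(u,0) \in C(H\lex G)$; consequently $(t,g)^- = (u-t,-g) = (t,g)^\sim$, and a short three-case check (according as $t=0$, $0<t<u$, or $t=u$) shows that this operation maps $M_t$ bijectively onto $M_{u-t}$. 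For (c), from
\[
(v,g_1)\oplus (t,g_2) = (v+t,\, g_1+g_2)\wedge (u,0)
\]
one reads off, by comparing first coordinates, that the result lies in $M_{v+t}$ when $v+t<u$ and in $M_u$ when $v+t\ge u$, which is precisely $M_{v\oplus t}$ with $v\oplus t = \min\{v+t,u\}$.

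For the strong cyclic property, $c_t = (t,0)$ lies in $M_t$ by construction, and it commutes with every element of $H\lex G$ because $H$ is Abelian, hence $c_t \in C(H\lex G)\cap M_t$. The partial sum $c_v + c_t$ is defined in $M$ exactly when $v+t\le u$, in which case the group addition gives $c_v + c_t = (v+t,0) = c_{v+t}$; and $c_u = (u,0) = 1_M$ is the top element. The only friction is notational: the condition (ii) of the strong cyclic property is written in the paper as ``$v+t\le 1$'', which in the present $(H,u)$ setting must be read as $v+t\le u$, matching the natural domain on which $c_v + c_t$ is defined. I do not foresee any genuine obstacle.
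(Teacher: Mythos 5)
Your proof is correct and follows exactly the route the paper takes: the paper states this proposition without a detailed proof, relying on the same explicit decomposition (given at (3.1) and in the example preceding the proposition) and the same cyclic family $c_t=(t,0)$, whose properties you verify carefully. Your reading of ``$v+t\le 1$'' as ``$v+t\le u$'' is the intended one.
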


Now we present a representation theorem for strong $(H,u)$-perfect pseudo MV-algebras by (4.1). The following theorem uses the basic ideas of the particular situation $(H,u)= (\mathbb H,1)$ which was proved in \cite[Thm 4.3]{264}.

\begin{theorem}\label{th:3.5}
Let $M$ be a strong $(H,u)$-perfect pseudo MV-algebra, where $(H,u)$ is an Abelian unital linearly ordered $\ell$-group. Then there is a unique (up to isomorphism) $\ell$-group $G$  such that $M \cong \Gamma(H \lex G,(u,0)).$
\end{theorem}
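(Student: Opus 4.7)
By Theorem \ref{th:2.1}, $M=\Gamma(K,w)$ for a unital $\ell$-group $(K,w)$ unique up to isomorphism, with $w=c_u$. The plan is to exhibit inside $K$ a central $\ell$-subgroup $H'$ isomorphic to $H$ (obtained by extending the cyclic family $(c_t)$) together with a normal convex $\ell$-subgroup $G$ whose positive cone is $M_0$, and then to show that $(K,w)\cong (H\lex G,(u,0))$ as unital $\ell$-groups; applying the functor $\Gamma$ will give $M\cong \Gamma(H\lex G,(u,0))$. Uniqueness of $G$ up to isomorphism will follow because $G$ is identifiable intrinsically with the convex $\ell$-subgroup of $K$ generated by $M_0$.

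The first step is to construct the two subgroups. Define $G^+:=M_0$; by Theorem \ref{th:3.2}(vi) we have $M_0+M_0=M_0$, so $G^+$ is a cancellative, directed submonoid of $K^+$, and I set $G:=M_0-M_0\subseteq K$. Using that $M_0$ is normal in $M$ (Theorem \ref{th:3.2}(vi)), downward hereditary, and satisfies $M_0+M_v=M_v=M_v+M_0$ for $0<v<u$ (Theorem \ref{th:3.2}(i)--(ii)), I verify that $G$ is a convex $\ell$-subgroup of $K$ with $G\cap K^+=M_0$, and that $G$ is normal in $K$. In parallel, I embed $H$ into $K$ by extending $t\mapsto c_t$ as follows: for $h\in H^+$ choose $n$ with $h\le nu$ and a decomposition $h=t_1+\cdots+t_k$ with $t_i\in[0,u]_H$, and set $\psi(h):=c_{t_1}+\cdots+c_{t_k}$. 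Well-definedness reduces to the cyclic identity $c_{v+t}=c_v+c_t$ (valid whenever $v+t\le u$) together with the commutativity of $H$ and of $\{c_t\}\subseteq C(K)$; extending by $\psi(h_1-h_2):=\psi(h_1)-\psi(h_2)$ yields an $\ell$-group embedding $\psi\colon H\hookrightarrow C(K)$ with $\psi(u)=w$.

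Setting $H':=\psi(H)$, define $\Phi\colon H\lex G\to K$ by $\Phi(h,g):=\psi(h)+g$. Since $H'\subseteq C(K)$, elements of $H'$ commute with every $g\in G$, so $\Phi$ is a group homomorphism. It is injective: if $\psi(h)+g=0$, then projecting to $M/M_0\cong \Gamma(H,u)$ (Theorem \ref{th:3.2}(vii)) forces $h=0$, hence $g=0$. It is surjective on $M$: every $a\in M_t$ satisfies $a-\psi(t)\in G$ (by Theorem \ref{th:3.2}(i)--(ii) for $0<t<u$; directly for $t=0$ and $t=u$), and since $K$ is generated as a group by $M$, surjectivity extends to all of $K$. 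An order check, using the definition of the lexicographic order on $H\lex G$ and the fact that $(M_t)$ is an $(H,u)$-decomposition, shows that $\Phi$ is an $\ell$-isomorphism mapping $(u,0)\mapsto w$. Passing through $\Gamma$ gives $M\cong \Gamma(H\lex G,(u,0))$, and uniqueness of $G$ follows from the intrinsic description $G\cong K/H'$.

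The main obstacle is to realize $G=M_0-M_0$ as a normal convex $\ell$-subgroup of $K$, rather than merely as an ideal of the pseudo MV-algebra $M$. Closure under the lattice operations of $K$ needs care because elements of $G$ are genuine group differences in $K$ and may fall outside $M$, and normality in $K$ must be lifted from the pseudo-MV normality of $M_0$ in $M$. For this one exploits that $K$ is generated as a group by $M=\Gamma(K,w)$, together with the conjugation identities supplied by the pseudo-effect algebra structure (in particular RDP$_2$) on $M$, which control how arbitrary elements of $K$ act on $M_0$.
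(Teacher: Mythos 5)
Your proposal is correct and is essentially the paper's own argument in a different packaging: the paper realizes $M_0$ as the positive cone of an $\ell$-group $G$ sitting inside $K$ (via Birkhoff's theorem) and defines the isomorphism $x\mapsto (t,\,x-c_t)$ directly at the pseudo MV-algebra level, which is exactly the inverse of your map $\Phi(h,g)=\psi(h)+g$ restricted to the unit interval, with uniqueness of $G$ obtained in both cases from the uniqueness of the $(H,u)$-decomposition. The extra verifications your route demands (well-definedness of the extension $\psi$ of $(c_t)$ to all of $H$, and that $M_0-M_0$ is an $\ell$-subgroup of $K$ with $G\cap K^+=M_0$) are the group-level counterparts of the paper's Claim~1 and its appeal to Birkhoff's theorem, so the two proofs rest on the same key ingredients: $G^+=M_0$ and the centrality of the strong cyclic family.
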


\begin{proof}
Since $M$ is a pseudo MV-algebra, due to \cite[Thm 3.9]{151}, there is a unique unital (up to isomorphism of unital $\ell$-groups)  $\ell$-group $(K,v)$ such that $M \cong \Gamma(K,v).$ Without loss of generality we can assume that $M=\Gamma(K,v)$. Assume $(M_t: t \in [0,u]_H)$ is an $(H,u)$-decomposition of $M$ with the strong cyclic property and with an $(H,u)$-strong cyclic family $(c_t \in M: t \in [0,u]_ H)$.

By (vi) of Theorem \ref{th:3.2}, $M_0$ is an associative cancellative semigroup satisfying conditions of Birkhoff's Theorem  \cite[Thm XIV.2.1]{Bir}, \cite[Thm II.4]{Fuc},
which guarantees that $M_0$ is a positive cone of
a unique (up to isomorphism) directed po-group $G$. Since $M_0$ is a lattice, we have that $G$ is an $\ell$-group.

Take the $(H,u)$-strong perfect pseudo MV-algebra $\mathcal M_{H,u}(G)$ defined by (4.1), and define a mapping $\phi: M \to \mathcal M_{H,u}(G)$ by

$$
\phi(x):= (t, x - c_t)\eqno (4.2)
$$
whenever $x \in M_t$ for some $t \in [0,u]_ H,$ where $ x-c_t$ denotes the difference taken in the group $K$.

\vspace{2mm}
{\it Claim 1:} {\it  $\phi$ is a well-defined mapping.}
 \vspace{2mm}

Indeed, $M_0$  is in fact the positive cone of an $\ell$-group $G$ which is a subgroup of $K.$    Let $x \in M_t.$ For the element $x - c_t \in K,$ we define $(x-c_t)^+:= (x-c_t)\vee 0 = (x \vee c_t)-c_t \in M_0$ (when we use (iii) of Theorem \ref{th:3.2}) and similarly $(x -c_t)^- := -((x-c_t)\wedge 0) = c_t - (x\wedge c_t) \in M_0.$ This implies that $x-c_t= (x-c_t)^+ - (x-c_t)^-\in G.$

\vspace{2mm}
{\it Claim 2:} {\it The mapping $\phi$ is an injective and surjective homomorphism of pseudo effect algebras.}

\vspace{2mm}

We have $\phi(0)=(0,0)$ and $\phi(1)=(1,0).$ Let $x \in M_t.$ Then $x^- \in M_{1-t},$ and $\phi(x^-) =(1-t, x -  c_{1-t}) = (1,0)-(t,x - c_t)=\phi(x)^-.$ In an analogous way, $\phi(x^\sim)=\phi(x)^\sim.$

Now let $x,y \in M$ and let $x+y$ be defined in $M.$ Then $x\in M_{t_1}$ and $y \in M_{t_2}.$ Since $x\le y^-,$ we have $t_1 \le 1-t_2$ so that $\phi(x) \le \phi(y^-)=\phi(y)^-$ which means  $\phi(x)+\phi(y)$ is defined in $\mathcal M_{H,u} (G).$ Then $\phi(x+y) = (t_1+t_2, x+y - c_{t_1+t_2}) =
(t_1+t_2, x+y -(c_{t_1} + c_{t_2}))= (t_1,x-c_{t_1}) + (t_2,y- c_{t_2})=\phi(x)+\phi(y).$

Assume $\phi(x)\le \phi(y)$ for some $x\in M_{t}$ and $y \in M_v.$ Then $(t,x-c_t)\le (v, y - c_v).$ If $t=v,$ then $x-c_t\le y-c_t$ so that $x\le y.$  If $i<j,$ then $x \in M_t$ and $y\in M_v$ so that $x<y.$  Therefore, $\phi$ is injective.

To prove that $\phi$ is surjective, assume two cases: (i) Take $g \in G^+=M_0.$  Then $\phi(g)=(0,g).$ In addition $g^- \in M_1$ so that $\phi(g^-) = \phi(g)^-= (0,g)^- = (1,0)-(0,g)=(1,-g).$ (ii) Let $g \in G$ and $t$ with $0<t<1$ be given. Then $g = g_1-g_2,$ where $g_1,g_2 \in G^+=M_0.$ Since $c_t \in M_t,$ $g_1 + c_t$ exists in $M$ and it belongs to $M_t,$ and $g_2 \le g_1+c_t$ which yields $(g_1+c_t)- g_2 = (g_1+c_t)\minusli g_2 \in M_t.$  Hence, $g+c_t = (g_1 + c_t)\minusli g_2 \in M_t$ which entails $\phi(g+c_t)=(t,g).$

\vspace{2mm}
{\it Claim 3:}  {\it If $x\le y,$ then $\phi(y \minusli x)=\phi(y)\minusli \phi(x)$ and $\phi(x\minusre y)=\phi(x)\minusre \phi(y).   $}
\vspace{2mm}

It follows from the fact that $\phi$ is a homomorphism of pseudo effect algebras.

\vspace{2mm}
{\it Claim 4:} $\phi(x\wedge y) = \phi(x)\wedge \phi(y)$ and $\phi(x\vee y)=\phi(x)\vee\phi(y).$
\vspace{2mm}

We have, $\phi(x), \phi(y) \ge \phi(x\wedge y).$ If $\phi(x), \phi(y) \ge \phi(w)$ for some $w \in M,$ we have $x,y \ge w$ and $x\wedge y \ge w.$ In the same way we deal with $\vee.$

\vspace{2mm}
{\it Claim 5:} {\it $\phi$ is a homomorphism of pseudo MV-algebras.}
\vspace{2mm}

It is necessary to show that $\phi(x\oplus y)=\phi(x)\oplus \phi(y).$
This follows straightforward from the previous claims and equality (2.2).

Consequently, $M$ is isomorphic to $\mathcal M_{H,u}(G)$ as pseudo MV-algebras.

If $M \cong \Gamma(H\lex G',(u,0))$ for some $G'$, then $(H\lex G,(u,0))$ and $(H\lex G',(u,0))$ are isomorphic unital $\ell$-groups in view of the categorical equivalence, see \cite[Thm 6.4]{151} or Theorem \ref{th:2.1}; let $\psi: \Gamma(H\lex G,(u,0)) \to \Gamma(H\lex G',(u,0))$ be an isomorphism of the lexicographic products. Hence, by Theorem \ref{th:3.2}(viii), we see that $\psi(\{(0,g): g \in G^+\})= \{(0,g'): g' \in G'^+\}$ which proves that $G$ and $G'$ are isomorphic $\ell$-groups.
\end{proof}

We say that a pseudo MV-algebra is {\it lexicographic} if there are an Abelian linearly ordered unital $\ell$-group $(H,u)$ and an $\ell$-group $G$ (not necessarily Abelian) such that $M \cong \Gamma(H\lex G,(u,0))$. In other words, by Theorem \ref{th:3.5}, $M$ is lexicographic iff $M$ is strong $(H,u)$-perfect for some Abelian linear unital $\ell$-group $(H,u)$. We note that in \cite{DFL}, a lexicographic MV-algebra denotes an MV-algebra having a lexicographic ideal which will be defined below in Section \ref{section}. But by Theorem \ref{th:local1}, we will conclude that both notions are equivalent for symmetric pseudo MV-algebras from $\mathcal M$.

It is worthy to note that according to Example \ref{ex:3.3}, the pseudo MV-algebra $M$ has two isomorphic lexicographic representations $\Gamma(\mathbb Z \lex (\mathbb Z\lex \mathbb Z),(1,(0,0)))$ and $\Gamma((\mathbb Z \lex \mathbb Z)\lex \mathbb Z,((1,0),0))$, but $(H_1,u_1):=(\mathbb Z, 1)$ and $(H_2,u_2):=(\mathbb Z \lex \mathbb Z,(1,0))$ are not isomorphic, as well as $G_1:=\mathbb Z \lex \mathbb Z$ and $G_2:=\mathbb Z$ are not isomorphic $\ell$-groups.

\section{Local Pseudo MV-algebras with Retractive Radical}

In \cite[Cor 2.4]{DiLe2}, the authors characterized MV-algebras that can be expressed in the form $\Gamma(\mathbb H \lex G,(1,0))$, where $\mathbb H$ is a subgroup of $\mathbb R$ and $G$ is an Abelian $\ell$-group. In what follows, we extend this characterization for local symmetric pseudo MV-algebras. This result gives another characterization of strong $(\mathbb H,1)$-perfect pseudo MV-algebras via lexicographic product.

We denote by ${\mathcal  M}$   the set of pseudo MV-algebras $M$ such that
either every maximal ideal of $M$ is normal or $M$ is trivial. By
\cite[(6.1)]{DDT},  ${\mathcal  M}$ is a variety.

Let $M$ be a symmetric pseudo MV-algebra. For any $x\in M$, we define the {\it order}, in symbols $\ord(x)$, as the least integer $n$ such that $n. x =1$ if such $n$ exists, otherwise, $\ord(x)=\infty$. It is clear that the set of all elements with infinite order is an ideal. An element $x$ is {\it finite} if $\ord(x)<\infty$ and $\ord(x^-)<\infty$.

\begin{lemma}\label{le:loc1}
Let $M$ be a pseudo MV-algebra from $\mathcal M$ and $x \in M$. There exists a proper normal ideal of $M$ containing $x$ if and only if $\ord(x)=\infty$.
\end{lemma}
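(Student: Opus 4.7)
The plan is to prove the two implications separately, with the nontrivial direction resting on the fact that for $M\in\mathcal M$ every maximal ideal is automatically normal, so Zorn's lemma suffices to extend a proper principal ideal to a proper normal one.

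For the forward implication, suppose $I$ is a proper normal ideal of $M$ with $x\in I$. If $\ord(x)=n<\infty$, then $n.x=1$. Since $I$ is closed under $\oplus$ and $n.x=x\oplus\cdots\oplus x$, we would have $1\in I$, hence $I=M$, contradicting properness. So $\ord(x)=\infty$.

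For the converse, assume $\ord(x)=\infty$ and consider the principal ideal generated by $x$,
\[
\langle x\rangle=\{y\in M:\ y\le k.x\ \text{for some integer}\ k\ge 1\}.
\]
This is an ideal of $M$ by a standard argument (downward closed and closed under $\oplus$ because $k.x\oplus m.x=(k+m).x$). The assumption $\ord(x)=\infty$ means $k.x\ne 1$ for every $k\ge 1$, so $1\notin\langle x\rangle$, and thus $\langle x\rangle$ is a proper ideal.

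Now I would invoke Zorn's lemma on the poset of proper ideals of $M$ containing $\langle x\rangle$: the union of a chain of proper ideals is proper (it cannot contain $1$), so a maximal element exists, giving a maximal ideal $I$ of $M$ with $x\in\langle x\rangle\subseteq I$. Since $M\in\mathcal M$, every maximal ideal of $M$ is normal; hence $I$ is a proper normal ideal containing $x$, as required. The only delicate point is the appeal to membership in $\mathcal M$ — without it one can only produce a maximal (not necessarily normal) ideal above $\langle x\rangle$, which is precisely why the hypothesis $M\in\mathcal M$ is needed.
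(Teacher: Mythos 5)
Your proof is correct, but it takes a different route from the paper's. The paper's argument is to identify the \emph{normal} ideal $I(x)$ generated by $x$ explicitly as $I(x)=\{y\in M: y\le m.x \mbox{ for some } m\in\mathbb N\}$ (formula (5.1)), so that the smallest normal ideal containing $x$ is proper precisely when no $m.x$ equals $1$; the burden there is to justify that this set is already normal, which is where the hypothesis $M\in\mathcal M$ enters. You instead work only with the plain (not necessarily normal) principal ideal $\langle x\rangle$, whose ideal axioms are immediate, and then obtain normality for free by extending to a maximal ideal via Zorn's lemma and invoking the defining property of $\mathcal M$ that maximal ideals are normal. Your version trades the explicit description of the generated normal ideal (which is reusable information) for a cleaner verification: you never have to check that $\{y: y\le m.x\}$ is closed under conjugation, at the cost of an appeal to Zorn and of producing a maximal rather than a minimal normal ideal above $x$. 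Both arguments use $M\in\mathcal M$ essentially, and your closing remark correctly isolates why the hypothesis cannot be dropped. One microscopic point you leave implicit: the ideal produced by Zorn is maximal among proper ideals \emph{containing} $\langle x\rangle$, but since any proper ideal containing it also contains $\langle x\rangle$, it is in fact a maximal ideal of $M$, which is what the normality assertion for $\mathcal M$ requires.
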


\begin{proof}
%
%

Let $x$ be any element of $M$ and let $I(x)$ be the normal ideal of $M$ generated by $x$. Then
$$
I(x)=\{y \in M: y\le m.x\ \mbox{ for some } m\in \mathbb N\}. \eqno(5.1)
$$

\end{proof}

\begin{lemma}\label{le:loc2}
Let $M$ be a symmetric pseudo MV-algebra. If $\ord(x\odot y)<\infty$, then $x\le y^-$.
\end{lemma}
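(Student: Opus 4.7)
The plan is to work inside the Mundici representation $M\cong \Gamma(G,u)$ of the symmetric pseudo MV-algebra $M$; symmetry translates to $u\in C(G)$, so negations satisfy $z^- = u-z = -z+u = z^\sim$, and the conclusion $x\le y^-$ rewrites equivalently as the group inequality $x+y\le u$, i.e.\ $x\odot y = (x+y-u)\vee 0 = 0$. Thus the task reduces to showing $a := x\odot y = 0$ under the hypothesis $\ord(a)<\infty$. Unpacking the definition of the iterated truncated sum in the group picture shows $n.a = (na)\wedge u$ for $a\ge 0$, so the hypothesis becomes: there exists $n\ge 1$ with $na\ge u$ in $G$.

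Next, I would proceed by contradiction: assume $a>0$. The strategy is to place $a$ inside a proper normal ideal of $M$, which by Lemma~\ref{le:loc1} would force $\ord(a)=\infty$, contradicting the hypothesis. The centrality $u\in C(G)$ is what makes this work smoothly, because it guarantees that the natural principal ideal generated by $a$ is two-sided, hence normal; with $u$ central one freely commutes $u$ through all sums and identifies $a^- = a^\sim$, so there is a single well-behaved neighbourhood of $a$ available. The identity $a = (x+y-u)^+$ lets one manipulate $a$ in terms of $x$ and $y$ and, together with the explicit formula $I(a) = \{z\in M: z\le m.a\text{ for some } m\in\mathbb{N}\}$ furnished by Lemma~\ref{le:loc1}, identifies the relevant normal ideal.

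The main obstacle will be verifying that this candidate ideal is proper, since in a general pseudo MV-algebra one could have $1\le m.a$, making $I(a)$ all of $M$ and defeating the argument. This is the point at which the ambient Section~5 hypothesis on locality with retractive radical must be invoked: it rules out $1\in I(a)$ precisely when $a\ne 0$ is of the form $x\odot y$, by means of the retraction onto the radical, which would have to send $a$ to an element of $\Rad(M)$ that cannot generate $1$. Once $I(a)$ is shown to be proper, Lemma~\ref{le:loc1} delivers $\ord(a)=\infty$, contradicting the hypothesis, and therefore $a=0$, giving $x+y\le u$ and $x\le y^-$ as desired.
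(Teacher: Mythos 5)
Your opening reduction is correct --- in $\Gamma(G,u)$ with $u$ central, $x\le y^-$ is equivalent to $x\odot y=0$ --- but it immediately exposes the problem: in any nontrivial pseudo MV-algebra $\ord(0)=\infty$, so the hypothesis $\ord(x\odot y)<\infty$ \emph{forces} $x\odot y\ne 0$, i.e.\ $x\not\le y^-$. (Concretely, $x=y=1$ gives $\ord(x\odot y)=\ord(1)=1<\infty$ yet $1\not\le 0$.) The printed statement has the inequality reversed; the intended conclusion, and the one actually needed in Lemma \ref{le:loc4} (with its two cases interchanged), is $y^-\le x$, equivalently $x\oplus y=1$. By committing from the start to proving $x\odot y=0$, you have set yourself the task of deriving a conclusion that contradicts the hypothesis, so no amount of repair at the later steps can succeed. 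This is also why your ``main obstacle'' --- showing that the normal ideal generated by $a=x\odot y$ is proper --- is insurmountable: by Lemma \ref{le:loc1} properness of $I(a)$ is \emph{equivalent} to $\ord(a)=\infty$, which is exactly the negation of your hypothesis. The proposed rescue via ``locality with retractive radical'' is moreover illegitimate (the lemma assumes only that $M$ is symmetric, not that it is local or that $\Rad_n(M)$ is retractive) and would not help anyway, as the $x=y=1$ example lives in every such algebra.

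The paper's argument is entirely different and is a short identity-chase rather than an ideal-theoretic one: since $(x\odot y)^-=y^-\oplus x^-$, the hypothesis $\ord(x\odot y)=n$ yields $(y^-\oplus x^-)^n=0$; combining the identity $(y^-\oplus x^-)\vee(x\oplus y)=1$ with the fact that $a\vee b=1$ implies $a^n\vee b^n=1$ gives $(x\oplus y)^n=1$, hence $x\oplus y=1$, hence $x+y\ge u$ and $y^-\le x$. (The paper's final line also miswrites this as $x\le y^-$, but everything before it proves $x\oplus y=1$.) If you want to salvage your write-up, restate the lemma as ``$\ord(x\odot y)<\infty$ implies $y^-\le x$'' and replace the ideal argument with this computation, or with its group-theoretic translation: $n.(x\odot y)=1$ means $n\bigl((x+y-u)\vee 0\bigr)\ge u$, and from this one deduces $x+y\ge u$ directly.
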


\begin{proof}
By the hypothesis, $\ord(x\odot y)=n$ for some integer $n \ge 1.$ Hence $(y^-\oplus x^-)^n=0.$ By \cite[Prop 1.24(ii)]{GeIo}, $(y^-\oplus x^-)\vee (x\oplus y)=1$ which by \cite[Lem 1.32]{GeIo} yields $(y^-\oplus x^-)^n\vee (x\oplus y)^n=1$, so that $(x\oplus y)^n=1$ and $x\oplus y =1$, consequently, $x\le y^-$.
\end{proof}

\begin{lemma}\label{le:loc3}
Let $M\in \mathcal M$ be a symmetric pseudo MV-algebra. The following statements are equivalent:
\begin{enumerate}
\item[{\rm (i)}] $M$ is local.
\item[{\rm (ii)}] For every $x \in M$, $\ord(x)<\infty$ or $\ord(x^-)<\infty$.

\end{enumerate}
\end{lemma}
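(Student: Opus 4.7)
My plan is to prove the equivalence by treating the two directions separately. Lemma~\ref{le:loc1} will be the main tool, translating between ``$\ord(x)=\infty$'' and ``$x$ lies in some proper normal ideal of $M$'', and the assumption $M\in\mathcal{M}$ will be used to make every maximal ideal automatically normal, so that the proof of (ii)$\Rightarrow$(i) only has to produce a unique maximal ideal.

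For the direction (i)$\Rightarrow$(ii) I would argue by contradiction. Let $I_0$ be the unique maximal (normal) ideal of the local pseudo MV-algebra $M$. If some $x\in M$ satisfied both $\ord(x)=\infty$ and $\ord(x^-)=\infty$, Lemma~\ref{le:loc1} would yield proper normal ideals $J_1\ni x$ and $J_2\ni x^-$; a standard Zorn argument embeds each of them into a maximal ideal, which is normal because $M\in\mathcal{M}$ and equal to $I_0$ because $M$ is local. Hence $x,x^-\in I_0$ and $1=x\oplus x^-\in I_0$, contradicting the properness of $I_0$.

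For (ii)$\Rightarrow$(i), because $M\in\mathcal{M}$ makes every maximal ideal normal, it suffices to show that $M$ possesses a unique maximal ideal. Suppose for contradiction that $I_1\ne I_2$ are two maximal (hence normal) ideals. Maximality forces $I_1\vee I_2=M$, so $1\in I_1\vee I_2$. The key technical step is to put this element into canonical form: every element of $I_1\vee I_2$ is dominated by one of the shape $a\oplus b$ with $a\in I_1$ and $b\in I_2$. I would prove this by starting from any $\oplus$-word $z_1\oplus\cdots\oplus z_k$ in $I_1\cup I_2$ and using the normality identity $x\oplus I_2=I_2\oplus x$ to rewrite each adjacent pair $z_i\oplus z_{i+1}$ with $z_i\in I_2$ and $z_{i+1}\in I_1$ as $z_{i+1}\oplus z'_i$ with $z'_i\in I_2$; iterating bubbles all $I_2$-entries to the right, and closure of each ideal under $\oplus$ then collapses the left block into a single $a\in I_1$ and the right block into a single $b\in I_2$.

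Applied to $1\in I_1\vee I_2$ this produces $a\in I_1$ and $b\in I_2$ with $a\oplus b=1$; in a symmetric pseudo MV-algebra this is equivalent to $b^-\le a$, so $b^-\in I_1$ by downward closure. On the other hand $b\in I_2$ together with Lemma~\ref{le:loc1} gives $\ord(b)=\infty$, assumption (ii) then forces $\ord(b^-)<\infty$, and a second application of Lemma~\ref{le:loc1} shows that no proper normal ideal can contain $b^-$---contradicting $b^-\in I_1$. The main obstacle is precisely the normal-commutation step that puts an element of the join $I_1\vee I_2$ into the canonical form $a\oplus b$; once that is secured, the final contradiction is a short chain using $a\oplus b=1\iff b^-\le a$, Lemma~\ref{le:loc1}, and hypothesis (ii).
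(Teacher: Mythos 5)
Your proof is correct, and while direction (i)$\Rightarrow$(ii) is essentially the paper's argument (Lemma \ref{le:loc1} plus locality put both $x$ and $x^-$ into the unique maximal normal ideal, contradicting properness), your direction (ii)$\Rightarrow$(i) takes a genuinely different route. The paper fixes a single maximal ideal $I$ (normal because $M\in\mathcal M$), takes $x\notin I$ with $\ord(x)=\infty$, and invokes the characterization of maximal normal ideals from \cite[Prop 3.5]{GeIo} to produce $(x^-)^n\in I$; Lemma \ref{le:loc1} then gives $\ord((x^-)^n)=\infty$, hypothesis (ii) gives $\ord(((x^-)^n)^-)=\ord(n.x)<\infty$, hence $\ord(x)<\infty$, a contradiction — so every maximal ideal coincides with the ideal of infinite-order elements and is therefore unique. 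You instead assume two distinct maximal (hence normal) ideals, note $1\in I_1\vee I_2$, and use the normality identity $z\oplus I_2=I_2\oplus z$ to bubble a representation $1=z_1\oplus\cdots\oplus z_k$ into the normal form $1=a\oplus b$ with $a\in I_1$, $b\in I_2$; then $b^-\le a$ puts $b^-\in I_1$, while $b\in I_2$, Lemma \ref{le:loc1} and hypothesis (ii) force $\ord(b^-)<\infty$, so $b^-$ lies in no proper normal ideal — contradiction. Your version trades the external citation for a self-contained computation in the ideal lattice (the bubble-sort step is valid: associativity of $\oplus$, normality of $I_2$, and closure of each ideal under $\oplus$ are all that is needed); the paper's version yields slightly more along the way, namely that the unique maximal ideal is exactly the set of elements of infinite order. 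Both arguments ultimately pivot on the same point: Lemma \ref{le:loc1} combined with hypothesis (ii) applied to a pair $b$, $b^-$ straddling the two ideals.
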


\begin{proof}
Let $M$ be local. There exists a unique maximal ideal $I$ that is normal. Assume that for some $x\in M$, we have $\ord(x)=\infty=\ord(x^-)$. By Lemma \ref{le:loc1}, $x,x^-\in I$ which is absurd.

Conversely, let for every $x \in M$, $\ord(x)<\infty$ or $\ord(x^-)<\infty.$ Let $I$ be a maximal ideal of $M$ and assume that $x \notin M$ for some $x\in M$ with $\ord(x)=\infty.$ Since $I$ is by the hypothesis normal, by a characterization of normal and maximal ideals, \cite[Prop 3.5]{GeIo}, there is an integer $n\ge 1$ such that $(x^-)^n\in I$. By Lemma \ref{le:loc1}, $\ord((x^-)^n)=\infty$ and $\ord(((x^-)^n)^-) <\infty$, i.e. $\ord(n.x)<\infty$, which implies $\ord(x)<\infty$ that is impossible. Hence, every element $x$ with infinite order belongs to $I$, and so $I$ is a unique maximal ideal of $M$, in addition $I$ is normal.
\end{proof}

\begin{lemma}\label{le:loc4}
Let $M\in \mathcal M$ be a local symmetric pseudo MV-algebra and let $I$ be a unique maximal ideal of $M$.  For all $x,y \in M$ such that $x/I \ne y/I$, we have $x<y$ or $y<x$.
\end{lemma}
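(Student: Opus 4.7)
The strategy is to introduce the two auxiliary elements $a := x \odot y^-$ and $b := y \odot x^-$ and reduce the statement to a single application of Lemma \ref{le:loc2}. Thus the plan is: translate the hypothesis $x/I \ne y/I$ to an ideal-theoretic statement about $a$ and $b$, convert it to a statement about orders via Lemma \ref{le:loc1} plus locality, and finish with Lemma \ref{le:loc2}.

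First I would translate the hypothesis. By the description of the quotient recalled in Section 2, $x/I = y/I$ is equivalent to $x\odot y^- \oplus y\odot x^- \in I$, so the hypothesis $x/I \ne y/I$ reads as $a\oplus b \notin I$. Since $I$ is an ideal (hence closed under $\oplus$), this forces at least one of $a,b$ to lie outside $I$. Next I would convert ``not in $I$'' to ``finite order''. Because $M$ lies in $\mathcal M$, every maximal ideal is normal, and by locality there is only one of them, namely $I$. A routine Zorn argument shows that every proper normal ideal is contained in a maximal normal ideal, hence in $I$; combined with Lemma \ref{le:loc1} this yields $z\in I$ iff $\ord(z)=\infty$ for every $z\in M$. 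In particular $a\notin I$ translates to $\ord(x\odot y^-)<\infty$, and symmetrically for $b$.

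Finally I would apply Lemma \ref{le:loc2} to the pair $(x,y^-)$: since $M$ is symmetric, $(y^-)^-=y$ by (A8) and symmetry, so the hypothesis $\ord(x\odot y^-)<\infty$ yields $x\le y$. The case $x=y$ is excluded by $x/I \ne y/I$, so $x<y$. The dual case $b\notin I$ is handled in the same way with the roles of $x$ and $y$ interchanged and gives $y<x$. I do not expect a genuine obstacle here; the only care needed is the bookkeeping that links the quotient condition $a\oplus b\notin I$ to the order hypothesis required by Lemma \ref{le:loc2}, via locality and Lemma \ref{le:loc1}.
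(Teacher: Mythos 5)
Your proof is correct and follows essentially the same route as the paper: translate $x/I\ne y/I$ into $x\odot y^-\notin I$ or $y\odot x^-\notin I$, identify the complement of $I$ with the elements of finite order (the paper cites Lemma \ref{le:loc3} for this, you derive it from Lemma \ref{le:loc1} plus locality), and then apply Lemma \ref{le:loc2} to conclude $x<y$ or $y<x$. The extra bookkeeping you supply for the equivalence $z\in I\Leftrightarrow\ord(z)=\infty$ is accurate and only makes explicit what the paper leaves implicit.
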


\begin{proof}
By hypothesis, we have that $x\odot y^-\notin I$ or $y\odot x^-\notin I$. By Lemma \ref{le:loc3}, in the first case we have $\ord(x\odot y^-)<\infty$ which by Lemma \ref{le:loc2} implies $x\le y$ and consequently $x<y$. In the second case, we similarly conclude $y<x$.
\end{proof}

We introduce the following notion. A normal ideal $I$ of a pseudo MV-algebra $M$ is said to be {\it retractive} if the canonical projection $\pi_I: M  \to M/I$ is retractive, i.e. there is a homomorphism $\delta_I: M/I \to M$ such that $\pi_I\circ \delta_I=id_{M/I}$. If a normal ideal $I$ is retractive, then $\delta_I$ is injective and $M/I$ is isomorphic to a subalgebra of $M$.

For example, if $M=\Gamma(H\lex G,(u,0))$ and $I=\{(0,g): g \in G^+\}$, then $I$ is a normal ideal, see Theorem \ref{th:3.2}(vi), and due to  $M/I \cong \Gamma(H,u) \cong \Gamma(H\lex \{0\},(u,0)) \subseteq \Gamma(H\lex G,(u,0))$, $I$ is retractive.

\begin{lemma}\label{le:ret1}
Let $I$ be a normal ideal of a symmetric pseudo MV-algebra. Then the following are equivalent:
\begin{enumerate}
\item[{\rm (i)}] $x/I = y/I$.
\item[{\rm (ii)}] $x= (h\oplus y)\odot k^-$, where $h,k \in I$.
\end{enumerate}
\end{lemma}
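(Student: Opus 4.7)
The plan is to prove the two implications separately. The direction (ii) $\Rightarrow$ (i) should be almost immediate: apply the canonical quotient homomorphism $\pi_I\colon M\to M/I$ to both sides of $x=(h\oplus y)\odot k^-$. Since $h,k\in I$ gives $\pi_I(h)=\pi_I(k)=0/I$ (and hence $\pi_I(k)^-=1/I$), the right hand side collapses to $(0/I\oplus (y/I))\odot (1/I)=y/I$, so $x/I=y/I$.

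For the substantive direction (i) $\Rightarrow$ (ii), I would try the explicit candidates $h:=x\odot y^-$ and $k:=x^-\odot y$. The two equivalent descriptions $x/I=\{z:x\odot z^-\oplus z\odot x^-\in I\}=\{z:x^\sim\odot z\oplus z^\sim\odot x\in I\}$, together with symmetry $x^-=x^\sim$, immediately force $h,k\in I$ (since an ideal is closed above by summands). Axiom (A6) yields $h\oplus y=(x\odot y^-)\oplus y=x\vee y$, so the remaining task is to prove $(x\vee y)\odot k^-=x$. I would verify this by passing to the representing unital $\ell$-group $(G,u)$ of $M=\Gamma(G,u)$ afforded by Theorem \ref{th:2.1}. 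Symmetry of $M$ means $u\in C(G)$, and a direct calculation in the group gives $h=(x-y)\vee 0$ and $k=(-x+y)\vee 0$. Using only associativity and the standard rule $g+(a\vee b)=(g+a)\vee (g+b)$, one obtains $h+y=x\vee y$ and, by the same token, $x+k=x+((-x+y)\vee 0)=y\vee x$, whence $x=(x\vee y)-k$ holds as an exact equality in $G$. Since $x\vee y\le u$ we have $h\oplus y=h+y=x\vee y$, and moving $u$ past $k$ in the defining formula for $\odot$ (which is where centrality of $u$ is used) gives $(x\vee y)\odot k^-=((x\vee y)-k)\vee 0=x\vee 0=x$.

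The main obstacle is the non-commutativity of $G$. A more symmetric-looking choice, such as $k=y\odot x^-=(y-x)\vee 0$, only yields $x+k=x\vee y$ up to conjugation, which is insufficient in a non-Abelian $\ell$-group. The deliberately asymmetric choice $k:=x^-\odot y=(-x+y)\vee 0$ is precisely what makes $x+k=x\vee y$ hold on the nose, and the hypothesis that $M$ is symmetric enters in exactly two places: first to place $k=x^-\odot y$ in $I$ via the $\sim$-description of the coset, and second to commute $u$ past $k$ in the final simplification that transfers the group-level identity into the pseudo MV-algebraic identity.
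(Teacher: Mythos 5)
Your proof is correct and takes essentially the same route as the paper: you choose the same witnesses $h=x\odot y^{-}$ and $k=x^{-}\odot y$, establish the same key identity $h\oplus y=x\vee y=x\oplus k$, and conclude $x=(h\oplus y)\odot k^{-}$. The only difference is that you verify the final step by computing in the representing unital $\ell$-group, whereas the paper deduces it directly from $k^{-}=y^{-}\oplus x\ge x$ and the identity $x\wedge k^{-}=(x\oplus k)\odot k^{-}$; both verifications are sound.
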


\begin{proof}
(i) $\Rightarrow$ (ii) Assume $x/I=y/I$. Then the elements $k=x^-\odot y$ and $h=x\odot y^-$ belong to $I$. It is easy to see that $x\oplus k=x\vee y=h\oplus y$. Since $k^-=y^-\oplus x \ge x$, we have $x = x\wedge k^-= (x\oplus k)\odot k^-= (h\oplus y)\odot k^-$.

(ii) $\Rightarrow$ (i) Then we have $x/I=y/I$.
\end{proof}

Let $M$ be a pseudo MV-algebra, and let $\mbox{Sub}(M)$ be the set of all subalgebras of $M$. Then $\mbox{Sub}(M)$ is a lattice with respect to set theoretical inclusion with the smallest element  $\{0,1\}$ and greatest one $M$. It is easy to see that if $M$ is symmetric and $I$ is an ideal of $M$, then the subalgebra  $\langle I\rangle$ of $M$ generated by $I$ is the set $\langle I \rangle =I \cup I^-.$ We recall a subalgebra $S$ of $M$ is said to be a {\it complement} of a subalgebra $A$ of $M$ if $S\cap A=\{0,1\}$ and $S\vee A=M$.

In the following, we characterize retractive ideals of pseudo MV-algebras in an analogous way as it was done for MV-algebras in \cite[Thm 1.2]{CiTo}.

\begin{theorem}\label{th:retract}
Let $M$ be a symmetric pseudo MV-algebra and $I$ a normal ideal of $M$. The following statements are equivalent:
\begin{enumerate}
\item[{\rm (i)}] $I$ is a retractive ideal.
\item[{\rm (ii)}] $\langle I \rangle$ has a complement.
\end{enumerate}
\end{theorem}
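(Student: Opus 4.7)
The plan is to prove both directions by working with the lattice-theoretic structure of subalgebras of $M$, exploiting symmetry to get the explicit description $\langle I\rangle = I\cup I^-$, which is straightforward to verify: closure under $\oplus$ follows since, for $a=c^-\in I^-$ and $b\in I$, one has $(c^-\oplus b)^-=c\odot b^-\le c\in I$, and similarly for the remaining cases. When $I\ne M$, this union is moreover disjoint, because $a\in I\cap I^-$ would give $a,a^-\in I$ and hence $1=a\oplus a^-\in I$. The trivial case $I=M$ is handled separately (both conditions hold trivially).

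\textbf{Direction (i) $\Rightarrow$ (ii).} Assume $\delta_I\colon M/I\to M$ is a section of $\pi_I$ and set $S:=\delta_I(M/I)$, which is a subalgebra as the image of a homomorphism. To show $S\cap\langle I\rangle=\{0,1\}$: if $x\in S\cap I$, write $x=\delta_I(y)$ for some $y\in M/I$, apply $\pi_I$ to obtain $y=\pi_I(x)=0$, so $x=\delta_I(0)=0$; dually, $S\cap I^-=\{1\}$. To show $S\vee\langle I\rangle=M$, take any $x\in M$ and set $s:=\delta_I(x/I)\in S$; since $\pi_I(s)=x/I$, Lemma \ref{le:ret1} yields $h,k\in I$ with $x=(h\oplus s)\odot k^-$, and all three generators lie in $S\cup\langle I\rangle$.

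\textbf{Direction (ii) $\Rightarrow$ (i).} Let $S$ be a complement of $\langle I\rangle$ and consider the restriction $\pi_I|_S\colon S\to M/I$, a homomorphism. For injectivity, if $\pi_I(s)=\pi_I(t)$ with $s,t\in S$, then $s\odot t^-\in I$ and also $s\odot t^-\in S$, so $s\odot t^-\in S\cap\langle I\rangle=\{0,1\}$; the value $1$ would force $1\in I$, hence $s\odot t^-=0$, i.e.\ $s\le t$, and symmetrically $t\le s$. For surjectivity, write an arbitrary $x\in M=S\vee\langle I\rangle$ as $x=\tau(s_1,\dots,s_n,a_1,\dots,a_m)$ with $s_i\in S$ and $a_j\in I\cup I^-$. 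Using the disjointness of $I$ and $I^-$ for proper $I$, define $c_j:=0$ when $a_j\in I$ and $c_j:=1$ when $a_j\in I^-$; then $s':=\tau(s_1,\dots,s_n,c_1,\dots,c_m)$ lies in $S$ (since $0,1\in S$), and because $\pi_I$ is a homomorphism sending $I$ to $0$ and $I^-$ to $1$, we have $\pi_I(s')=\pi_I(x)$. Thus $\pi_I|_S$ is an isomorphism, and $\delta_I:=(\pi_I|_S)^{-1}$ is the desired retraction.

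\textbf{Main obstacle.} The central subtlety lies in the surjectivity step of (ii) $\Rightarrow$ (i). A direct computational construction of a representative of $x/I$ in $S$ starting from the decomposition $M=S\vee\langle I\rangle$ requires one to manipulate nested terms; the cleanest route is the substitution argument above, which works precisely because $\langle I\rangle$ maps under $\pi_I$ onto the two-element subalgebra $\{0,1\}$ of $M/I$. This is where symmetry enters essentially: it guarantees that $\langle I\rangle=I\cup I^-$ rather than a more complicated subalgebra, so that the replacement $a_j\mapsto c_j\in\{0,1\}$ makes sense and stays in $S$.
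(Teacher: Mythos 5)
Your proof is correct and follows essentially the same route as the paper's: in (i)~$\Rightarrow$~(ii) you take $S=\delta_I(M/I)$ and use Lemma~\ref{le:ret1} to get $S\vee\langle I\rangle=M$, and in (ii)~$\Rightarrow$~(i) you use $S\cap\langle I\rangle=\{0,1\}$ for injectivity of $\pi_I|_S$ and the same term-substitution argument (replacing generators from $\langle I\rangle=I\cup I^-$ by their images $0$ or $1$, which lie in $S$) for surjectivity. The only differences are cosmetic: you handle the trivial case $I=M$ and the disjointness of $I$ and $I^-$ explicitly, and your injectivity step uses $s\odot t^-$ directly where the paper passes through $x\vee y$.
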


\begin{proof}
Let $I$ be a retractive ideal of $M$ and let $\delta_I:M/I \to M$ be an injective homomorphism such that $\pi_I\circ \delta_I=id_{M/I}$. We claim that $\delta_I(M/I)$ is a complement of $\langle I \rangle$. Clearly $\delta_I(M/I) \cap \langle I \rangle =\{0,1\}$.  Let $x \in M$, then $x/I = \delta_I(M/I)(x/I)/I$ so that by Lemma \ref{le:ret1}, we have $x = (h\oplus \delta_I(x/I))\odot k^-$ for some $h,k \in I$ that implies $x \in \delta_I(M/I)\vee \langle I \rangle$.

Conversely, assume that $\langle I \rangle$ has a complement $S\in \mbox{Sub}(M)$. From $S \cap \langle I \rangle=\{0,1\}$ we conclude that the canonical projection $\pi_I$ is injective on $S$. Indeed, if for $x,y \in S$, we have $x/I=y/I$, then also $x/I=(x\vee y)/I=y/I$ which yields $(x\vee y)\odot x^-\in S\cap I=\{0\}$, $(x\vee y)\odot y^-\in S\cap I=\{0\}$. Therefore, $x=x\vee y=y$ and this implies that the restriction $\pi_{I\upharpoonleft S}$ is injective.

From $S \vee \langle I \rangle=M$, we have that for each $x \in M$, there is a term in the language of pseudo MV-algebras, says $p(a_1,\ldots,a_m,b_1,\ldots, b_n)$, such that
$$ x = p^M(x_1,\ldots,x_m,y_1,\ldots, y_n)
$$
for some $x_1,\ldots,x_m \in S$ and $y_1,\ldots,y_n \in \langle I \rangle$. Then
$$ x/I = p^{M/I}(x_1/I,\ldots,x_m/I,y_1/I,\ldots, y_n/I).
$$
Since $y_i/I \in \{0,1\}$ for each $i=1,\ldots,n$, there is an $n$-tuple $(t_1,\ldots,t_n)$ of elements from $\{0,1\}$ such that
$x/I = p^{M}(x_1,\ldots,x_m,t_1,\ldots,t_n)/I$.  Since
$$
(x_1,\ldots,x_m,t_1,\ldots,t_n) \in S^{m+n},
$$
we have that $p^{M}(x_1,\ldots,x_m,t_1,\ldots,t_n) \in S$. Therefore, the restriction $\pi_{I\upharpoonleft S}$ is an isomorphism from $S$ onto $M/I$, and setting $\delta_I = (\pi_{I\upharpoonleft S})^{-1}$, we see that $I$ is retractive.
\end{proof}

\begin{theorem}\label{th:local}
Let $M$ be a symmetric pseudo MV-algebra from $\mathcal M$. The following statements are equivalent:

\begin{enumerate}
\item[{\rm (i)}] $M$ is local and $\Rad_n(M)$ is retractive.
\item[{\rm (ii)}] $M$ is strong $(\mathbb H,1)$-perfect for some subgroup $\mathbb H$ of $\mathbb R$ with $1 \in \mathbb H$.
\item[{\rm (iii)}] There exists a subgroup $\mathbb H$ of $\mathbb R$ with $1 \in \mathbb H$ and an $\ell$-group $G$ such that $M \cong \Gamma(\mathbb H\lex G, (u,0))$.
\end{enumerate}
\end{theorem}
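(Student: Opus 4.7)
The strategy is to close the loop (ii) $\Rightarrow$ (iii) $\Rightarrow$ (i) $\Rightarrow$ (ii). The equivalence (ii) $\Leftrightarrow$ (iii) is already in place: Proposition \ref{pr:3.4} gives (iii) $\Rightarrow$ (ii), and Theorem \ref{th:3.5} gives (ii) $\Rightarrow$ (iii). So only the two implications involving (i) require real work.

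For (iii) $\Rightarrow$ (i), write $M = \Gamma(\mathbb H \lex G,(1,0))$ with the canonical decomposition of Proposition \ref{pr:3.4}. I would check locality via Lemma \ref{le:loc3}: an element $(h,g) \in M$ with $h>0$ satisfies $n.(h,g)=1$ as soon as $nh>1$, while if $h=0$ with $g>0$ then $(h,g)^- = (1,-g)$ satisfies $2.(1,-g)=1$, so every element or its negation has finite order. For retractiveness of $\Rad_n(M)$, the ideal $M_0 = \{(0,g):g\ge 0\}$ is normal by Theorem \ref{th:3.2}(vi) and coincides with the unique maximal ideal of $M$ (hence with $\Rad_n(M)$) by Theorem \ref{th:3.4}; the map $h \mapsto (h,0)$ supplies the required section from $M/M_0 \cong \Gamma(\mathbb H,1)$ back into $M$.

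For (i) $\Rightarrow$ (ii), set $I := \Rad_n(M)$, which by locality is the unique maximal ideal of $M$, normal, and retractive with a section $\delta_I : M/I \to M$. Because $I$ is a maximal normal ideal, $M/I$ is simple; combining the state-morphism correspondence recalled in Section 2 with H\"older's theorem, $M/I$ embeds into $\Gamma(\mathbb R,1)$, so there is an isomorphism $\psi : M/I \to \Gamma(\mathbb H,1)$ for some subgroup $\mathbb H$ of $\mathbb R$ containing $1$. I would then define $M_h := \{x \in M : \psi(x/I) = h\}$ and $c_h := \delta_I(\psi^{-1}(h))$ for $h \in [0,1]_{\mathbb H}$. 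The $(M_h)$ form an $(\mathbb H,1)$-decomposition: disjointness and covering are automatic, (a) of Definition \ref{de:3.1} is Lemma \ref{le:loc4}, and (b), (c) follow from $\psi$ being a pseudo MV-algebra homomorphism and $M$ being symmetric. The ordinary cyclic conditions on $(c_h)$ are immediate from $\delta_I$ being a homomorphism together with the observation that when $v+t \le 1$ in $\mathbb H$ the group sum $c_v + c_t$ is defined and equals $c_v \oplus c_t = c_{v+t}$.

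The principal obstacle is to upgrade $(c_h)$ to a \emph{strong} cyclic family, i.e., to show that each $c_h$ lies in the commutative center $C(K)$ of the enveloping unital $\ell$-group $(K,u)$ of $M = \Gamma(K,u)$. Symmetry of $M$ gives $u \in C(K)$ for free. My plan is to lift $\delta_I$ via the categorical equivalence of Theorem \ref{th:2.1} to a unital $\ell$-group homomorphism $\tilde\delta : \mathbb H \to K$, so that the extension $C \hookrightarrow K \twoheadrightarrow K/C \cong \mathbb H$ splits, where $C$ is the convex $\ell$-subgroup of $K$ generated by $I$. Since $\mathbb H$ is Abelian, every commutator $[c_h,x]$ with $x \in M$ already lies in $C$; I would then promote this to genuine commutativity using that $\mathbb H$ is Archimedean and, in the cyclic-$\mathbb H$ case, the uniqueness of strong cyclic elements of a given order provided by \cite[Lem~5.2]{DvKo}, which forces $c_{1/n}$ to coincide with the unique strong cyclic element of order $n$. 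Once $(c_h)$ is seen to be a strong cyclic family, $M$ is strong $(\mathbb H,1)$-perfect and Theorem \ref{th:3.5} delivers (iii), closing the cycle.
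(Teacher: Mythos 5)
Your overall architecture matches the paper's (a cycle through (ii) and (iii), with (iii) $\Rightarrow$ (i) read off from the canonical decomposition of $\Gamma(\mathbb H\lex G,(1,0))$, and (i) $\Rightarrow$ (ii) built from the fibers of the quotient map together with the retraction), and everything up to the construction of the family $(c_h)$ is sound. The genuine gap sits exactly where you flag the ``principal obstacle'': proving $c_h\in C(K)$. Your three-part plan does not close it. First, the observation that each commutator of $c_h$ with $x\in K$ lies in the convex $\ell$-subgroup $C$ generated by $I$ is automatic from $K/C\cong\mathbb H$ being Abelian and yields no leverage: $C$ can be a large non-Abelian $\ell$-group, and nothing forces a commutator inside $C$ to vanish. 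Second, Archimedeanness is a property of the quotient $\mathbb H$, not of $C$, so it cannot ``promote'' the containment of the commutator in $C$ to its being $0$. Third, the appeal to \cite[Lem 5.2]{DvKo} is circular: that lemma gives uniqueness of a strong cyclic element of order $n$ \emph{given that one exists}, and the existence of a central $c$ with $nc=u$ is precisely what is to be proved; moreover it would only treat the indices $h=1/n$ and says nothing when $\mathbb H$ is dense in $\mathbb R$.

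The paper closes this step by a different mechanism, which you should adopt. The state morphism $s_0$ with $\Ker(s_0)=I$ is a pseudo MV-homomorphism onto $\Gamma(\mathbb H,1)$, so by the categorical equivalence of Theorem \ref{th:2.1} it extends to a unital $\ell$-group homomorphism $\hat s_0:(K,v)\to(\mathbb H,1)$. For any $x\in K$, symmetry of $M$ (equivalently $v\in C(K)$) guarantees that the conjugate $x+c_t-x$ again lies in the interval $M=[0,v]$, and $\hat s_0(x+c_t-x)=\hat s_0(x)+\hat s_0(c_t)-\hat s_0(x)=t$ because $\mathbb H$ is Abelian; combined with the fact that $c_t$ is the unique element of the retract $\delta_I(M/I)$ in the fiber over $t$, this identifies $x+c_t-x$ with $c_t$, i.e., $c_t\in C(K)$. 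Without this (or an equivalent) argument your proof of (i) $\Rightarrow$ (ii) is incomplete, and with it the rest of your proposal goes through.
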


\begin{proof}
(i) $\Rightarrow$ (ii)  Let $I$ be a unique maximal and normal ideal of $M$ and let $(K,v)$ be a (unique up to isomorphism) unital $\ell$-group given by Theorem \ref{th:2.1}, such that $M \cong \Gamma(K,v)$; without loss of generality we can assume that $M=\Gamma(K,v)$. By \cite{156}, there is an extremal state (= state morphism) $s_0: M\to [0,1]$ such that $I=\Ker(s_0)$. The range of $s_0$, $s_0(M)$, is an MV-algebra which corresponds to a unique subgroup $\mathbb H$ of $\mathbb R$ such that $s_0(M) = \Gamma(\mathbb H,1)$ is a subalgebra of $\Gamma(\mathbb R,1)$.

Since  $I=\Rad_n(M)$, $I$ is a retractive ideal, and $M/I$ is isomorphic to $\Gamma(\mathbb H,1)$, we have $\Gamma(\mathbb H,1)$ can be injectively embedded into $K$ and $\mathbb H$ is isomorphic to a subgroup of $K$.

In addition, let $\langle I\rangle$ be a subalgebra of $M$ generated by $I$. Then $\langle I\rangle = I \cup I^-=I \cup I^-$, $I^-=I^\sim$, and $\langle I\rangle$ is a perfect pseudo MV-algebra. By \cite[Prop 5.2]{DDT}, there is a unique (up to isomorphism) $\ell$-group $G$ such that $\langle I\rangle \cong \Gamma(\mathbb Z\lex G,(1,0))$.

In what follows, we prove that $M \cong \Gamma(\mathbb H\lex G,(1,0))$.

Define $M_t=s^{-1}(\{t\})$, $t \in [0,1]_\mathbb H$. We assert that $(M_t: t \in [0,1]_\mathbb H)$ is an $(\mathbb H,1)$-decomposition of $M$. It is clear that it is a decomposition: Every $M_t$ is non-empty, and $M_t^- =M_{1-t}=M_t^\sim$ for each $t\in [0,1]_\mathbb H$. In addition, if $x\in M_v$ and $y \in M_t$, then $x\oplus y \in M_{v\oplus t}$, $x\wedge y \in M_{v\wedge t}$ and $x\vee y  \in M_{v\vee t}$. By Lemma \ref{le:loc4}, we have $M_s \leqslant M_t$ for all $ s<t$, $s,t \in [0,1]_ \mathbb H$.

Since $I=\Rad_n(M)$ is retractive, there is a unique subalgebra $M'$ of $M$ such that $s_0(M')=s_0(M)$. For any $t\in [0,1]_\mathbb H$, there is a unique element $x_t \in M'$ such that $s_0(x_t)=t$. We assert that the system $(x_t \colon t \in [0,1]_\mathbb H)$ satisfies the following properties (i) $c_t \in M_t$ for each $t \in [0,1]_\mathbb H$, (ii) $c_{v+t}=c_v+c_t$ whenever $v+t\le 1$, (iii) $c_1=1$. (iv) $c_t\in C(K)$. Indeed, since $s_0$ is a homomorphism of pseudo MV-algebras, by the categorical equivalence Theorem \ref{th:2.1}, $s_0$  can be uniquely extended to a unital $\ell$-group homomorphism $\hat s_0: (K,v)\to (\mathbb H,1)$. Now
if $x$ is any element of $K$, then $x+c_t-x\in M$ because $M$ is symmetric, and hence $\hat s_0(x+c_t-x)=\hat s_0(x) + \hat s_0(c_t)- \hat s_0(x)=s_0(c_t)=t$ which implies $x+ c_t-x= c_t$ so that $x+c_t=c_t+x$.

In other words, we have proved that $(M_t: t \in [0,1]_\mathbb H)$ has the strong cyclic property,  and consequently, $M$ is strong $(\mathbb H,1)$-perfect. By Theorem \ref{th:3.5}, $M \cong \Gamma(\mathbb H\lex G',(1,0))$ for some unique (up to isomorphism) $\ell$-group $G'$. Hence $G' \cong G$, where $G$ was defined above,  which proves (ii) $\Rightarrow$ (iii).

The implication (iii) $\Rightarrow$ (i) is evident by the note that is just before Theorem \ref{th:local}.
\end{proof}

We note that if $M$ is a local symmetric pseudo MV-algebra with a retractive ideal $\Rad_n(M)$, then $M$ is a lexicographic extension of $\Ker_n(M)$ in the sense described in \cite{HoRa}.

\begin{proposition}\label{pr:product}
Let $(M_\alpha: \alpha \in A)$ be a system of pseudo MV-algebras and let $I_\alpha$ be a non-trivial normal ideal of $M_\alpha$, $\alpha \in A$. Set $M=\prod_\alpha M_\alpha$ and $I=\prod_\alpha I_\alpha$. Then $I$ is a retractive ideal of $M$ if and only if every $I_\alpha$ is a retractive ideal of $M_\alpha$.
\end{proposition}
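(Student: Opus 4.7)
The plan hinges on the canonical identification $M/I \cong \prod_\alpha M_\alpha/I_\alpha$, valid since both the pseudo MV-algebra operations on a direct product and the congruence induced by the product ideal $I=\prod_\alpha I_\alpha$ are computed coordinatewise. For the sufficiency direction, given retractions $\delta_\alpha \colon M_\alpha/I_\alpha \to M_\alpha$ I would define $\delta \colon M/I \to M$ as the coordinatewise product $(x_\alpha/I_\alpha)_\alpha \mapsto (\delta_\alpha(x_\alpha/I_\alpha))_\alpha$; componentwise checking shows that $\delta$ is a pseudo MV-algebra homomorphism satisfying $\pi_I \circ \delta = \mathrm{id}_{M/I}$, making $I$ retractive.

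For the harder necessity direction, fix $\delta\colon M/I\to M$ and an index $\alpha_0$. The plan is to extract a retraction $\delta_{\alpha_0}\colon M_{\alpha_0}/I_{\alpha_0}\to M_{\alpha_0}$ via a Boolean-element decomposition. In $M/I\cong\prod_\alpha M_\alpha/I_\alpha$ take the Boolean element $e_{\alpha_0}$ equal to $1/I_{\alpha_0}$ at coordinate $\alpha_0$ and to $0/I_\beta$ elsewhere, and set $d_{\alpha_0}:=\delta(e_{\alpha_0})$. Since homomorphisms preserve Boolean elements, $d_{\alpha_0}$ is Boolean in $M$; reading $\pi_I(d_{\alpha_0})=e_{\alpha_0}$ coordinatewise yields $(d_{\alpha_0})_{\alpha_0}^-\in I_{\alpha_0}$ and $(d_{\alpha_0})_\beta\in I_\beta$ for $\beta\ne\alpha_0$. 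Because Boolean elements are central in the underlying unital $\ell$-group, one obtains direct-product decompositions $M\cong[0,d_{\alpha_0}]\times[0,d_{\alpha_0}^-]$ and $M_{\alpha_0}\cong[0,(d_{\alpha_0})_{\alpha_0}]\times[0,(d_{\alpha_0})_{\alpha_0}^-]$. Using that ideals of direct products split as products of ideals together with $(d_{\alpha_0})_{\alpha_0}^-\in I_{\alpha_0}$, the second factor of $M_{\alpha_0}$ is absorbed by $I_{\alpha_0}$, yielding the identification $M_{\alpha_0}/I_{\alpha_0}\cong [0,(d_{\alpha_0})_{\alpha_0}]/J_{\alpha_0}$ with $J_{\alpha_0}:=I_{\alpha_0}\cap[0,(d_{\alpha_0})_{\alpha_0}]$. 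Composing (i) the restriction of $\delta$ from $[0,e_{\alpha_0}]\cong M_{\alpha_0}/I_{\alpha_0}$ into $[0,d_{\alpha_0}]$, (ii) the $\alpha_0$-coordinate projection $[0,d_{\alpha_0}]\to[0,(d_{\alpha_0})_{\alpha_0}]$ coming from the Boolean decomposition of $M$, and (iii) the embedding of $[0,(d_{\alpha_0})_{\alpha_0}]$ back into $M_{\alpha_0}$ afforded by the direct-product representation of $M_{\alpha_0}$, produces the desired $\delta_{\alpha_0}$; the retraction identity $\pi_{I_{\alpha_0}}\circ\delta_{\alpha_0}=\mathrm{id}$ follows by a diagram chase from $\pi_I\circ\delta=\mathrm{id}$.

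The principal obstacle is step (iii): the naive set-theoretic inclusion $[0,(d_{\alpha_0})_{\alpha_0}]\hookrightarrow M_{\alpha_0}$ fails to be a pseudo MV-algebra homomorphism when $(d_{\alpha_0})_{\alpha_0}\ne 1$, because it does not send the local top to $1\in M_{\alpha_0}$. Handling this requires exploiting the full direct-product representation of $M_{\alpha_0}$, using that the second Boolean factor is absorbed by $I_{\alpha_0}$, so that reinterpreting $M_{\alpha_0}/I_{\alpha_0}$ as a quotient of the first factor alone makes the embedding a bona fide pseudo MV-algebra homomorphism that preserves the constant $1$. Verifying that the resulting $\delta_{\alpha_0}$ is a homomorphism and a section of $\pi_{I_{\alpha_0}}$ then reduces to straightforward componentwise checks inherited from the Boolean decomposition structure, and the hypothesis that each $I_\alpha$ is non-trivial serves merely to ensure that the statement is not vacuous on any factor.
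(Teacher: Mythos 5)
Your ``if'' direction (each $I_\alpha$ retractive implies $I$ retractive) is correct and coincides with the paper's proof: the coordinatewise product of the sections $\delta_\alpha$ is a homomorphism and a right inverse of $\pi_I$.

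The gap is in the ``only if'' direction, precisely at the step (iii) that you yourself flag. Writing $d:=\delta(e_{\alpha_0})$ and $c:=d_{\alpha_0}$, your construction yields a homomorphism $h\colon M_{\alpha_0}/I_{\alpha_0}\to[0,c]$ with $\pi_{I_{\alpha_0}}\circ h=\mathrm{id}$, where $[0,c]$ carries its interval structure with top $c$. But a section of $\pi_{I_{\alpha_0}}$ must land in $M_{\alpha_0}\cong[0,c]\times[0,c^-]$, so besides $h$ you must also produce a homomorphism $M_{\alpha_0}/I_{\alpha_0}\to[0,c^-]$ into the complementary Boolean factor; ``reinterpreting $M_{\alpha_0}/I_{\alpha_0}$ as a quotient of the first factor'' only restates that $h$ is a section of $[0,c]\twoheadrightarrow M_{\alpha_0}/I_{\alpha_0}$, it does not supply that missing homomorphism, and no such homomorphism need exist. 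In fact the ``only if'' direction is false as stated. Take $B=\Gamma(\mathbb Z\lex\mathbb Z,(2,0))$ with $J=\{(0,n):n\ge 0\}$, so $B/J\cong\Gamma(\mathbb Z,2)$ and $J$ is retractive via $a\mapsto(a,0)$; let $C=\{0,1\}$, $M_1=B\times C$, $I_1=J\times C$, and let $M_2=\Gamma(\mathbb Z\lex\mathbb Z,(1,0))$ with $I_2=\Rad(M_2)$, so $M_2/I_2\cong C$. Then $M_1/I_1\cong\Gamma(\mathbb Z,2)$, and $I_1$ is not retractive, since the $C$-component of any section would be a homomorphism $\Gamma(\mathbb Z,2)\to\{0,1\}$, which is impossible (the middle element of $\Gamma(\mathbb Z,2)$ equals its own negation, and no element of $\{0,1\}$ does). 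Nevertheless $I_1\times I_2$ is retractive in $M_1\times M_2$: the map $(a,b)\mapsto(((a,0),b),(b,0))$ is a homomorphic section. So this direction cannot be repaired; note that the paper's own proof has the same defect, since it sets $\delta_\alpha=\pi^\alpha\circ\delta$ although $\delta$ is only defined on $\prod_\beta M_\beta/I_\beta$ and there is no homomorphic insertion of $M_\alpha/I_\alpha$ into that product (padding with $0$ fails to preserve $1$, padding with $1$ fails to preserve $0$). The converse does hold when every $M_\alpha$ is directly indecomposable (then $c=1$ and your $h$ already is the required section), and in the power case of Corollary \ref{co:product}, where one can precompose $\pi^\alpha\circ\delta$ with the diagonal homomorphism $M/I\to(M/I)^\kappa$.
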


\begin{proof} The set $I=\prod_\alpha I_\alpha$ is a non-trivial normal ideal of $M$. Then $M/I \cong \prod_\alpha M_\alpha/I_\alpha$ and without loss of generality, we can assume that $M/I =\prod_\alpha M_\alpha/I_\alpha$.

Assume that every $I_\alpha$ is retractive. We denote by $\pi_\alpha$ the canonical projection of $M_\alpha$ onto $M_\alpha/I_\alpha$ and by $\delta_\alpha:M_\alpha/I_\alpha \to M_\alpha$ its right inversion i.e. $\pi_\alpha \circ \delta_\alpha = id_{M_\alpha/I_\alpha}$. Let $\pi:M\to M/I$ be the canonical projection. If we set $\delta:M/I\to M$ by $\delta((x_\alpha/I_\alpha)_\alpha):= ((\delta_\alpha(x_\alpha/I_\alpha))_\alpha)$, then we have $\pi\circ \delta = id_{M/I}$, so that $I$ is retractive.

Conversely, let $I$ be a retractive ideal of $M$. Let $\pi^\alpha:\prod_\alpha M_\alpha$ be the $\alpha$-th projection of $M$ onto $M_\alpha$. We define a mapping $\delta_\alpha: M_\alpha/I_\alpha \to M_\alpha$ by $\delta_\alpha = \pi^\alpha \circ \delta$ $(\alpha \in A$). Then $\prod_\alpha \pi_\alpha \circ \delta_\alpha(x_\alpha/I_\alpha)= \prod \pi_\alpha \circ \pi^\alpha \circ \delta (x_\alpha/I_\alpha)$ which yields $\pi_\alpha \circ \delta_\alpha = id_{M_\alpha/I_\alpha}$.
\end{proof}

\begin{corollary}\label{co:product}
Let $I$ be a non-trivial normal ideal of a pseudo MV-algebra $M$ and let $\alpha$ be a cardinal. Then the power $I^\alpha$ is a retractive ideal of the power pseudo MV-algebra $M^\alpha$ if and only if $I$ is a retractive ideal of $M$.
\end{corollary}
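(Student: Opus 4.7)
The plan is to derive this as a direct consequence of Proposition \ref{pr:product} by specializing the index set. Given a cardinal $\alpha$, I would fix an index set $A$ of cardinality $\alpha$ and set $M_\beta := M$, $I_\beta := I$ for every $\beta \in A$. Then by definition of the power, $\prod_{\beta \in A} M_\beta = M^\alpha$ and $\prod_{\beta \in A} I_\beta = I^\alpha$. Since $I$ is non-trivial, every $I_\beta$ is non-trivial, which verifies the hypotheses of Proposition \ref{pr:product}.

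For the forward direction, suppose $I$ is retractive in $M$. Then every factor $I_\beta = I$ is retractive in $M_\beta = M$, so Proposition \ref{pr:product} yields that $I^\alpha = \prod_\beta I_\beta$ is retractive in $M^\alpha = \prod_\beta M_\beta$. For the converse, if $I^\alpha$ is retractive in $M^\alpha$, then Proposition \ref{pr:product} immediately gives that each $I_\beta = I$ is retractive in $M_\beta = M$ (the statement for any single fixed $\beta \in A$ suffices, and since $A$ is non-empty because $\alpha \ge 1$ is understood in the power construction, we conclude that $I$ itself is retractive).

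I do not anticipate any obstacle, as this is a routine specialization of the already-established Proposition \ref{pr:product}; the only mild point to mention is that the non-triviality assumption on $I$ is precisely what guarantees the non-triviality of each factor $I_\beta$ required to invoke the proposition, and that the case $\alpha = 0$ (if admitted) trivializes because $M^0$ is a one-element pseudo MV-algebra with only trivial ideals, in which situation the statement holds vacuously or is excluded by the non-triviality hypothesis on $I^\alpha$.
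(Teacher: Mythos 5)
Your proposal is correct and matches the paper's intent exactly: the paper states this corollary without proof, treating it as the immediate specialization of Proposition \ref{pr:product} to the constant family $M_\beta = M$, $I_\beta = I$, which is precisely what you carry out.
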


\section{Free Product and Local Pseudo MV-algebras}

In the present section we show that every local pseudo MV-algebra that is a strong $(\mathbb H,1)$-perfect pseudo MV-algebra has also a representation via a free product. It will generalize results from \cite{DiLe2} known only for local MV-algebras.

Let $\mathcal V$ be a class of pseudo MV-algebras and let $\{A_t \}_{ t \in
T}\subseteq \mathcal V$. According to \cite{DvHo1}, we say that a $\mathcal V$-{\it coproduct} (or simply
a {\it coproduct} if $\mathcal V$ is known from the context) of this
family is a pseudo MV-algebra $A\in \mathcal V$,
together with a family
of homomorphisms

 $$
 \{f_t \colon  A_t\to A\}_{t \in T}
 $$
such that
\begin{enumerate}
\item[{\rm (i)}] $\bigcup_{t\in T} f_t(A_t)$ generates $A$;
\item[{\rm (ii)}] if $B\in \mathcal V$ and $\{g_t \colon \ A_t\to
B\}_{ t \in T}$ is a family of homomorphisms, then
there exists a (necessarily) unique homomorphism $h \colon
A\to B$ such that $g_t=f_th$ for all $t\in T$.
\end{enumerate}

Coproducts exist for every variety $\mathcal V$ of algebra, and are
unique. They are designated by $\bigsqcup_{t\in T}^{\mathcal V} A_t$
(or $A_1\sqcup^{\mathcal V}A_2$ if $T=\{1,2\}$). If each of the
homomorphisms $f_t$ is an embedding, then the coproduct  is called
the {\it free product}.

By \cite[Thm 2.3]{DvHo1}, the free product of any set of non-trivial pseudo MV-algebras exists in the variety of pseudo MV-algebras.

Now let $M$ be a symmetric local pseudo MV-algebra from
$\mathcal M$ with a unique maximal and normal ideal $I=\Ker_n(M)=\Ker(M)$. Let $\mathbb H$ be a subgroup of $\mathbb R$ such that $M/I\cong \Gamma(
\mathbb H,1)$. Take an $\ell$-group $G$ such that $\Gamma(\mathbb Z\lex G,(1,0))\cong \langle I \rangle$.  Let $N =\Gamma(\mathbb H \lex G,(1,0))$. If $I$ is retractive, then by Theorem \ref{th:local}, $M \cong N$, and in this section, we describe this situation using free product of $M/I$ and $\langle I\rangle$. We note that this was already established in \cite[Thm 3.1]{DiLe2} but only
for MV-algebras.  For our generalization, we introduce a weaker form of our free product of $M/I$ and $\langle I \rangle$ which we will denote $M/I\sqcup_w \langle I\rangle$ in the variety of symmetric pseudo MV-algebras from $\mathcal M$ and which means that (i) remains and (ii) are changed as follows
\begin{enumerate}
\item[(i*)] if $\phi_1: M/I \to M/I\sqcup_w \langle I\rangle$ and $\phi_2:\langle I \rangle \to M/I\sqcup_w \langle I\rangle$ are injecive homomorphisms, then $\phi_1(M/I)\cup \phi_2(\langle I\rangle)$ generates $M/I\sqcup_w \langle I\rangle$,
\item[(ii*)] if $\kappa_1:M/I\to A$ and $\kappa_2: \langle I \rangle \to A$, where $A$ is a symmetric pseudo MV-algebra from $\mathcal M$, are such homomorphisms that $\kappa_1(a)+ \kappa_2(b)=\kappa_2(b)+\kappa_1(a)$, then there is a unique homomorphism $\psi: M/I\sqcup_w \langle I\rangle\to A$ such that $\psi \circ \phi_1 = \kappa_1$ and $\psi\circ \phi_2 = \kappa_2$.
\end{enumerate}
We note that if $M$ is an MV-algebra, then our notion coincides with the original form of the free product of MV-algebras in the class of MV-algebras.

\begin{theorem}\label{th:coprod}
Let $M$ be a symmetric local pseudo MV-algebra from $\mathcal M$, $I =\Rad_n(I)$ and $N =\Gamma(\mathbb H\lex G, (1,0))$ for some unital $\ell$-subgroup $(\mathbb H,1)$ of $(\mathbb R,1)$ and some $\ell$-group $G$.  The following statements are equivalent:

\begin{enumerate}
\item[{\rm (i)}] $M \cong N$.
\item[{\rm (ii)}] The free product $M/I \sqcup_w \langle I\rangle$ in the variety of symmetric pseudo MV-algebras from $\mathcal M$ is isomorphic to  $M$.
\end{enumerate}
\end{theorem}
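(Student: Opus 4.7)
My approach is to establish that $N=\Gamma(\mathbb H\lex G,(1,0))$ itself realises the universal property of the weak free product $M/I\sqcup_w \langle I\rangle$ inside the subvariety of symmetric pseudo MV-algebras from $\mathcal M$. Once this is done, both implications follow at once from uniqueness of universal objects: (i) gives $M\cong N\cong M/I\sqcup_w\langle I\rangle$, whence (ii); conversely (ii) gives $M\cong M/I\sqcup_w\langle I\rangle\cong N$, whence (i).

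To set up the embeddings in $N$, I identify $M/I\cong \Gamma(\mathbb H,1)$ (via Theorems \ref{th:local} and \ref{th:3.2}(vii)) and $\langle I\rangle\cong \Gamma(\mathbb Z\lex G,(1,0))$ (a perfect pseudo MV-algebra, cf.\ \cite{DDT}), and take $\phi_1\colon M/I\to N$ given by $h\mapsto (h,0)$ and $\phi_2\colon\langle I\rangle\to N$ given by the natural inclusion, so $\phi_2(\langle I\rangle)=\{(0,g):g\ge 0\}\cup\{(1,g):g\le 0\}$. Both are injective homomorphisms. For the generation condition (i*), any $(h,g)\in N$ with $h\in\{0,1\}$ already lies in $\phi_2(\langle I\rangle)$; and for $0<h<1$, one writes $(h,g)=\phi_1(h)+\phi_2((0,g))$ when $g\ge 0$, or $(h,g)=\phi_1(h)\odot \phi_2((0,-g))^-$ when $g<0$ (using $(0,-g)\le (h,0)$ in $N$). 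The commuting hypothesis of (ii*) holds trivially in $N$ by coordinate-wise addition in $\mathbb H\lex G$ and the Abelianness of $\mathbb H$.

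For the universal property itself, I take a symmetric $A\in\mathcal M$ with enveloping unital $\ell$-group $(K,v)$ and homomorphisms $\kappa_1\colon M/I\to A$ and $\kappa_2\colon\langle I\rangle\to A$ satisfying the commuting hypothesis. I lift them via Theorem \ref{th:2.1} to unital $\ell$-group homomorphisms $\hat\kappa_1\colon\mathbb H\to K$ and $\hat\kappa_2\colon\mathbb Z\lex G\to K$, with $\hat\kappa_1(1)=v=\hat\kappa_2((1,0))$, and upgrade the MV-level partial commuting to the full group identity
\[
\hat\kappa_1(h)+\hat\kappa_2((0,g)) \;=\; \hat\kappa_2((0,g))+\hat\kappa_1(h),\qquad h\in\mathbb H,\ g\in G.
\]
From this, $\hat\psi\colon\mathbb H\lex G\to K$ defined by $\hat\psi((h,g)):=\hat\kappa_1(h)+\hat\kappa_2((0,g))$ is a group homomorphism; a short case analysis ($h=0$, $h=1$, $0<h<1$) together with $\hat\kappa_2((1,g))=v+\hat\kappa_2((0,g))\ge 0$ shows that $\hat\psi$ preserves positivity, hence is an $\ell$-group homomorphism. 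Since $\hat\psi((1,0))=v$, restriction yields a pseudo MV-homomorphism $\psi\colon N\to A$ with $\psi\circ\phi_i=\kappa_i$, and uniqueness of $\psi$ is forced by the generation established in the previous step.

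The main obstacle is exactly the upgrade of commutativity from the bounded MV-partial setting to the full enveloping $\ell$-group. The subtlety is that $\mathbb H$ need not be divisible and that $\mathbb Z\lex G$ has no integer divisions in its first coordinate, so one cannot simply shrink $h,g$ into the MV-regime; instead, the plan is to use the downward directedness of the positive cones $\mathbb H^+$ and $G^+$ to express $\hat\kappa_1(h)$ and $\hat\kappa_2((0,g))$ as differences of images of small positive elements whose MV-sums in $A$ are defined, and then propagate the commuting through the additivity of $\hat\kappa_1,\hat\kappa_2$. Should this scaling argument prove cumbersome, a backup route is to observe that $\kappa_2(I)\subseteq\Infinit(A)$ and to use the symmetric $\mathcal M$-structure of $A$ to show that $\kappa_1(\mathbb H)\cup\kappa_2(G)$ already generates an Abelian subgroup of $K$ directly from the small-element commuting condition.
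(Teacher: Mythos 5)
Your overall strategy is the same as the paper's: exhibit $N=\Gamma(\mathbb H\lex G,(1,0))$ as the weak free product $M/I\sqcup_w\langle I\rangle$ via the embeddings $\phi_1(h)=(h,0)$ and the natural inclusion $\phi_2$ of $\Gamma(\mathbb Z\lex G,(1,0))$, verify (i*) and (ii*), and deduce both implications from uniqueness of the universal object. The generation argument and the definition of $\hat{\psi}(h,g)=\hat{\kappa}_1(h)+\hat{\kappa}_2((0,g))$ are also those of the paper. But two decisive verifications are missing.

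The serious gap is the positivity of $\hat{\psi}$ in the case $0<h<1$ with $g\in G$ arbitrary (in particular $g$ negative and ``large''); this is the heart of the whole proof and your ``short case analysis'' does not contain the idea that makes it work. The inequality $\hat{\kappa}_2((1,g))=v+\hat{\kappa}_2((0,g))\ge 0$ only disposes of $h=1$. For $0<h<1$ one must prove $\kappa_2\phi_2^{-1}(0,g')<\kappa_1\phi_1^{-1}(h,0)$ for every $g'\in G^+$, and the paper's argument (its Claim 1) rests on a domination phenomenon you never invoke: $\kappa_2$ carries $I$ into $\Infinit(A)\subseteq \Rad(A)$, so $\epsilon:=\kappa_2\phi_2^{-1}(0,g')$ satisfies $n\epsilon<1$ for every $n$, while $a:=\kappa_1\phi_1^{-1}(h,0)$ has finite order because $\mathbb H\subseteq\mathbb R$ is Archimedean, whence $n.a=1$ for some $n$ and $n\epsilon<1=n.a\le na$ forces $\epsilon<a$ (and likewise $a<\epsilon^-$). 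Without this, the claim that $\hat{\psi}$ preserves the positive cone of $\mathbb H\lex G$ --- equivalently, that $\psi$ restricts to a pseudo MV-homomorphism $N\to A$ --- is unsupported.

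By contrast, the ``main obstacle'' you single out is not one. The hypothesis in (ii*) is that $\kappa_1(a)+\kappa_2(b)=\kappa_2(b)+\kappa_1(a)$ in the enveloping group of $A$ for \emph{all} $a\in M/I$ and $b\in\langle I\rangle$; since the centralizer of a set is a subgroup, the subgroups of $K$ generated by $\kappa_1(M/I)$ and by $\kappa_2(\langle I\rangle)$ --- which contain $\hat{\kappa}_1(\mathbb H)$ and $\hat{\kappa}_2(\mathbb Z\lex G)$, respectively --- commute elementwise, and the ``upgrade'' is immediate. Your proposed detour through divisibility and small positive elements addresses a weaker reading of the hypothesis and is unnecessary, but as written you leave this step, and hence condition (ii*), unverified. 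In short: right plan, same as the paper's, but the two steps that carry the mathematical content are asserted rather than proved.
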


\begin{proof}
(i) $\Rightarrow$ (ii) Let $M=\Gamma(K,v)$. By Theorem \ref{th:local}, $I=\Ker_n(M)$ is a retractive ideal. Define $\phi_1: M/I \to \Gamma(\mathbb H \lex \{0\},(1,0))\subset \Gamma(\mathbb H \lex G,(1,0))=N$ and $\phi_2: \langle I \rangle \to \Gamma(\mathbb Z \lex G,(1,0))\subset \Gamma(\mathbb H \lex G,(1,0))=N$ as follows: Let $s_0$ be a unique state on $M$ which is guaranteed by Theorem \ref{th:3.4}. We set $M_t=s_0^{-1}(\{t\})$ for any $t \in [0,1]_\mathbb H$.
Then $\phi_1(x/I):=(t,0)$ whenever $x \in M_t$. Since $\langle I\rangle =I \cup I^-$, we set $\phi_2(x)=(0,x)$ if $x \in I$ and $\phi_2(x)=(1,x-1)$  if $x \in I^-$. From (4.2) of the proof of Theorem \ref{th:3.5} we see that $\phi_1$ and $\phi_2$ are injective homomorphisms of pseudo MV-algebras into $N$. Using again (4.2), we see that $\phi_1(M/I)\cup \phi_2(\langle I\rangle)$ generates $N$.

Now suppose that there is a symmetric pseudo MV-algebra $A$ from $\mathcal M$   and two mutually commuting homomorphisms $\kappa_1: M/I\to A$ and $\kappa_2: \langle I \rangle\to A=\Gamma(W,w)$, i.e. $\kappa_1(a)+\kappa_2(b)=\kappa_2(b)+\kappa_1(a)$ for all $a \in M/I$ and $b\in \langle I\rangle$.  Then $\kappa_1(1/I)=w=\kappa_2(1)$ and $w$ commutes with every $\kappa_1(a)$ and $\kappa_2(b)$.

\vspace{2mm}
\noindent{\bf Claim 1.} {\it Let $a=\kappa_1\phi_1^{-1}(h,0)$ with $0<h<1$, $h \in H$, and $\epsilon =\kappa_2\phi_2^{-1}(0,g)$ with $g \in G^+$.  Then $\epsilon < a< \epsilon^-$.}

\vspace{2mm}
Indeed, by the assumption, from the form of the element $a$ we conclude that it is finite and $\epsilon$ and $a$ commute. Then there is an integer $n \ge 1$ such that $n.a=1$. Since $\epsilon \in \Rad(A)$, we have $n.\epsilon = n\epsilon <1=n.a \le na$ which yields $0\le n(a-\epsilon)$, so that $\epsilon < a$. In a similar way we show $\epsilon <a^-$, i.e. $\epsilon < a <\epsilon^-$.

\vspace{2mm}
\noindent{\bf Claim 2.} {\it Let $\alpha =\kappa_1\circ \phi^{-1}:\Gamma(\mathbb H\lex \{0\},(1,0))\to A$ and $\beta=\kappa_2\circ \phi_2^{-1}: \phi_2^{-1}(\langle I \rangle)\to A$. Passing to the corresponding representing unital $\ell$-groups, we will denote by $\hat\alpha$ and $\hat\beta$ the corresponding extensions of $\alpha$ and $\beta$ to $\ell$-homomorphisms of unital $\ell$-groups into the unital $\ell$-group $(G_A,w)$ such that $\Gamma(G_A,w)=A$.  Then $\hat\alpha(0,h)+\hat\beta(0,g)\ge 0$ for each $h \in \mathbb H^+$ and each $g\in G$.  }

\vspace{2mm}
If $h=0$, the statement is evident.  Let $h>0$.  Then $a:=\hat\alpha(0,h)+\hat\beta(0,g)= \hat\alpha(h,0)+\hat\beta(0,g^+) + \hat\beta(0,g^-)$, where $g^+=g\vee 0$ and $g^-=g\wedge 0$. Then $a =\hat\alpha(h,0)+\beta(0,g^+) +\beta(1,g^-)-\beta(1,0)$. From Claim 1, we get $\hat\alpha(h,0)+\beta(0,g^+) \ge \beta(0,-g^-)=\beta(1,0)-\beta(1,g^-)$ and the claim is proved.

\vspace{2mm}
Now we define a mapping $\psi: \mathbb H\lex G \to G_A$ by
$$\psi(h,g)=\hat \alpha(h,0)+\hat\beta(0,g),\quad (h,g)\in \mathbb H\lex G.
$$

\noindent{\bf Claim 3.} {\it $\psi$ is an $\ell$-group homomorphism of unital $\ell$-groups.}
\vspace{2mm}

(a) We have $\psi(0,0)=0$ and $\psi(1,0)=w$. Moreover,
\begin{eqnarray*}
\psi(h_1,g_1)+\psi(h_2,g_2)&=& \hat \alpha(h_1,0)+\hat\beta(0,g_1)+ \hat \alpha(h_2,0)+\hat\beta(0,g_2)\\
&=& \hat \alpha(h_1,0)+ \hat \alpha(h_2,0)+ \hat\beta(0,g_1) +\hat\beta(0,g_2)\\
&=& \hat \alpha(h_1+h_2,0)+\hat\beta(0,g_1+g_2)\\
&=& \psi(h_1+h_2,g_1+g_2).
\end{eqnarray*}

(b) According to Claim 2, we see that $\psi(h,g)\ge 0$ whenever $(h,g)\ge (0,0)$.

(c) $\psi$ preserves $\wedge$. For $x:=(h_1,g_1)\wedge (h_2,g_2)$, we have three cases (i) $x= (h_1,g_1)$ if $h_1 < h_2$, (ii) $x= (h_1,g_1\wedge g_2)$ if $h_1=h_2$, and (iii) $x=(h_2,g_2)$ if $h_2 < h_1.$

In  case (i), we have $\psi(h_2,g_2)-\psi(h_1,g_1)=\psi(h_2-h_1,g_2-g_1)\ge 0$ by Claim 2. Thus $\psi$ preserves $\wedge$.  In case (ii), we have \begin{eqnarray*}
\psi((h_1,g_1)\wedge (h_2,g_2))&=&\psi(h_1, g_1\wedge g_2)= \hat\alpha(h_1,0)+ \hat\beta(0,g_1\wedge g_2)\\
&=& \hat\alpha(h_1,0)+\hat\beta(0,g_1)\wedge \hat\beta(0,g_2)\\
&=& (\hat\alpha(h_1,0)+\hat\beta(0,g_1))\wedge (\hat\alpha(h_1,0)+\hat\beta(0,g_2))\\
&=& \psi(h_1,g_1)\wedge \psi(h_2,g_2).
\end{eqnarray*}
Case (iii) follows from (i).

If we restrict $\psi$ to $N$, then we have
$$\psi(h,g)= (\alpha(h,0)\oplus \beta(1,g^+))\odot \beta(1,g^-), \quad (h,g)\in N.
$$
Using that $\psi$ is an $\ell$-group homomorphism, we have that if $g=g_1+g_2$, where $g_1\ge0$ and $g_2\le 0$, then

$$\psi(h,g)= (\alpha(h,0)\oplus \beta(1,g_1))\odot \beta(1,g_2).
$$

\vspace{2mm}
{\it Uniqueness of $\psi$.} If $\psi'$ is another homomorphism from $N$ into $A$ such that $\phi_i\circ\psi = \kappa_i$ for $i=1,2$,
then $\psi'(0,0)=\psi(0,0)$, $\psi'(0,g)=\psi'(\phi_2\phi_2^{-1}(0,g)) =\phi_2 \kappa_2(0,g)=\psi(0,g)$, $g \in G^+$. $\psi'(h,0)=\psi'(\phi_1\phi_1^{-1}(h,0))= \phi_1\kappa_1(h,0)= \psi(h,0)$, $h\in [0,1]_\mathbb H$.

Using all above steps, we have that the free product $M/I \sqcup_w \langle I\rangle \cong N$.  Since $N\cong M$, we have established (ii).

(ii) $\Rightarrow$ (i)
From the proof of the previous implication we have that the free product of $\Gamma(\mathbb H,1)$ and $\Gamma(\mathbb Z \lex G,(1,0))$ is isomorphic to $N=\Gamma(\mathbb H \lex G,(1,0))$. Since $M/I \cong \Gamma(\mathbb H,1)$ and $\langle I \rangle \cong \Gamma(\mathbb Z \lex G,(1,0))$, we have from (ii) $M\cong N$.
\end{proof}

\section{Pseudo MV-algebras with Lexicographic Ideals}\label{section}

The following notions were introduced in \cite{DFL} only for MV-algebras, and in this section,  we extend them for symmetric pseudo MV-algebras and generalize some results from \cite{DFL}.

We say that a normal ideal $I$ is (i) {\it commutative} if $x/I\oplus y/I=y/I\oplus x/I$ for all $x,y \in M$, (ii) {\it strict} if $x/I < y/I$ implies $x<y$.

For example, (i) if $s$ is a state, then $\Ker(s)$ is a commutative ideal, \cite[Prop 4.1(ix)]{156}, (ii) every maximal ideal that is normal is commutative, \cite{156}. If $M$ is a local symmetric pseudo MV-algebra, $\Rad_n$ is a strict ideal.

Now we extend for pseudo MV-algebras the notion of a lexicographic ideal introduced in \cite{DFL} only for MV-algebras. We say that a commutative ideal $I$ of a pseudo MV-algebra $M$, $\{0\}\ne I \ne M$, is {\it lexicographic} if
\begin{enumerate}
\item[{\rm (i)}] $I$ is strict,
\item[{\rm (ii)}] $I$ is retractive,
\item[{\rm (iii)}] $I$ is prime.
\end{enumerate}

We note that a lexicographic ideal for MV-algebras was defined in \cite{DFL} by (i)--(iii) and

\begin{enumerate}
\item[{\rm (iv)}] $y\le x \le y^-$ for all $y \in I$ and all $x \in M \setminus \langle I\rangle$, where $\langle I \rangle $ is the subalgebra of $M$ generated by $I$.
\end{enumerate}

But since $I$ is strict, we have $y \in I^-$ implies $z<y$ for any $z \in I$. Hence, if $z \notin I$, we have $z/I > x/I=0/I$ for all $x \in I$ which yields $z>x$.  Therefore, $\langle I \rangle =I \cup I^-$ and (iv) holds, and consequently, (iv) from \cite{DFL} is superfluous, and for the definition of a lexicographic ideal of an MV-algebra we need only (i)--(iii).

Let $\mbox{LexId}(M)$ be the set of lexicographic ideals of $M$. If we take the MV-algebra $M$ from Example \ref{ex:3.3}, we see that $I_1=\{(0,m,n): m > 0, n \in \mathbb Z \mbox{ or } m=0, n\ge 0\}$ and $I_2=\{(0,0,n): n \ge 0\}$ are two unique lexicographic ideals of $M$ and $I_1 \subset I_2$.

\begin{proposition}\label{pr:loc5}
If $I,J\in \mbox{\rm LexId}(M)$, then $I\subseteq J$ or $J\subseteq I$. In addition, every lexicographic ideal is contained in the radical $\Rad(M)$ of $M$. If one of the lexicographic ideals is a maximal ideal, then $M$ has a unique maximal ideal of $M$.

\end{proposition}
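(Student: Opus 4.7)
The plan is to prove the three assertions in turn. For the chain structure, suppose $I,J\in\mathrm{LexId}(M)$ and neither inclusion holds; pick $a\in I\setminus J$ and $b\in J\setminus I$. Since $I$ is prime, $M/I$ is linearly ordered, and as $a/I=0<b/I$, strictness of $I$ forces $a<b$ in $M$. Interchanging the roles of $I$ and $J$, strictness of $J$ likewise forces $b<a$, a contradiction. Hence the lexicographic ideals of $M$ form a chain under inclusion.

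For the containment $I\subseteq\Rad(M)$, I would first observe that every $a\in I$ is an infinitesimal in the representing unital $\ell$-group $(K,v)$ with $M=\Gamma(K,v)$: $\oplus$-closure gives $n.a\in I$, while $n.a\ne 1$ since $I$ is proper, so $na<v$ in $K$ for every $n\ge 1$. As $M$ is symmetric, $v\in C(K)$, so $a^-=v-a$ and
\[
a^-\oplus a^-=((v-a)+(v-a))\wedge v=(2v-2a)\wedge v=v=1,
\]
the last equality using $2a<v$. Now fix any maximal ideal $\mathfrak m$ of $M$ and suppose $I\not\subseteq\mathfrak m$; then $\langle\mathfrak m\cup I\rangle\supsetneq\mathfrak m$, and by maximality $\langle\mathfrak m\cup I\rangle=M$, so $1\in\langle\mathfrak m\cup I\rangle$. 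The key step is a short induction on the length of an $\oplus$-word built from $\mathfrak m\cup I$: because $I$ is normal (commutative ideals being normal), one can slide every $I$-letter leftward past any $\mathfrak m$-letter via the normality rewrite $m\oplus i=i'\oplus m$ with $i'\in I$, and then collect $I$-letters and $\mathfrak m$-letters separately using their respective $\oplus$-closures. Every element of $\langle\mathfrak m\cup I\rangle$ is therefore dominated by some $i\oplus m$ with $i\in I$ and $m\in\mathfrak m$. Applied to $1$, this yields $1\le i\oplus m$, whence in $K$ we have $m\ge -i+v=i^\sim=i^-$ (by symmetry), so $i^-\in\mathfrak m$; but $i\in I$ is infinitesimal, and the preliminary computation gives $i^-\oplus i^-=1$, contradicting $1\notin\mathfrak m$.

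I expect the main obstacle to be exactly this rewriting of $\oplus$-combinations in $\langle\mathfrak m\cup I\rangle$ into the canonical form $i\oplus m$; in a possibly non-commutative pseudo MV-algebra it demands normality of one of the two ideals, and since $\mathfrak m$ is not assumed normal it is the commutativity (hence normality) of $I$ built into the definition of a lexicographic ideal that rescues the argument. The last assertion follows directly: if a lexicographic ideal $I$ happens to be maximal, then the previous part gives $I\subseteq\mathfrak n$ for every maximal ideal $\mathfrak n$ of $M$, and maximality of $I$ together with $\mathfrak n\ne M$ forces $\mathfrak n=I$, so $I$ is the unique maximal ideal of $M$.
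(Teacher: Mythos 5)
Your first and third assertions are proved exactly as in the paper: the chain property by playing the strictness of $I$ against the strictness of $J$ on a pair $a\in I\setminus J$, $b\in J\setminus I$, and uniqueness of the maximal ideal from $I\subseteq\mathfrak n$ together with maximality of $I$. For the middle assertion, however, the paper's argument is a two-line consequence of strictness alone: for any maximal ideal $J$ with $J\not\subseteq I$ (which holds unless $J=I$, the case already covered), pick $y\in J\setminus I$; then $x/I=0/I<y/I$ for every $x\in I$, strictness gives $x<y$, and downward closedness of $J$ gives $x\in J$, so $I\subseteq J\subseteq\Rad(M)$. Your route --- generating the improper ideal $\langle\mathfrak m\cup I\rangle$, normalizing $\oplus$-words to the form $i\oplus m$ (which the normality of $I$ does legitimately license, commutative ideals being normal by definition), and deriving the contradiction $i^\sim\in\mathfrak m$ with $i^\sim\oplus i^\sim=1$ --- is workable but uses none of the structure that makes the statement easy; it buys you nothing here except independence from having to locate an element of $\mathfrak m\setminus I$.

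There is also one step that does not stand as written: the inference ``$n.a\ne 1$ since $I$ is proper, so $na<v$ in $K$.'' Since $n.a=(na)\wedge v$, the hypothesis $n.a\ne 1$ only says $na\not\ge v$, not $na\le v$; in the four-element Boolean algebra $\Gamma(\mathbb Z\times\mathbb Z,(1,1))$ (coordinatewise order) the element $(1,0)$ generates a proper ideal, yet $(1,0)+(1,0)$ is undefined. Properness of $I$ is not enough --- what rescues you is strictness, which you never invoke at this point: for $i\in I$ one has $i/I=0/I<i^\sim/I=1/I$, hence $i<i^\sim$ and $2i<v$, which is all your computation actually uses. Finally, the proposition does not assume $M$ symmetric, so you should run the argument with $i^\sim=-i+v$ throughout rather than pass to $i^-$ ``by symmetry''; the computation $i^\sim\oplus i^\sim=1$ goes through verbatim once $2i\le v$ is known.
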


\begin{proof}
Suppose the converse, that is, there are $x \in I\setminus J$ and $y \in J\setminus I$. Then $x/I<y/I$ and $y/J<x/J$ which yields $x<y$ and $y<x$ which is absurd.

Assume that $I$ is a lexicographic ideal of $M$. If $I=\Rad(M)$, the statement is evident. If there is an element $y \in \Rad(M)$ such that $y\notin I$, then by (ii) $x<y$ for any element $x \in I$, so that $I \subseteq \Rad(M)$.

Let $I$ be any lexicographic ideal of $M$. We have two cases. (a) $I$ is a maximal ideal of $M$. We claim $M$ has a unique maximal ideal. Indeed, for any maximal ideal $J$ of $M$, $J \ne I$, there are $x \in I\setminus J $ and $y \in J\setminus I$ which implies $x <y$ so that $x \in J$ which is a contradiction. Hence, $I$ is a unique maximal ideal of $M$, then $\Rad(M)=I$ and every lexicographic ideal of $M$ is in $\Rad(M)$. (b)  $I$ is not a maximal ideal of $M$. Let $J$ be an arbitrary maximal ideal of $M$. There exists $y \in J\setminus I$ which yields $y>x$ for any $x \in I$, so that $x \in J$ and $I\subseteq J$. Hence, again $I \subseteq \Rad(M)$.
\end{proof}

\begin{remark}\label{re:1}
{\rm It is clear that if $\mbox{LexId}(M)\ne \emptyset$ is finite, then $\mbox{LexId}(M)$ has the greatest element. If $\mbox{LexId}(M)$ is infinite, we do not know whether $\mbox{LexId}(M)$ has the greatest element. And if this element exists, is it a maximal ideal of $M$?}
\end{remark}

We note that in Theorem \ref{th:variety}(1), we show that if $M$ is symmetric from $\mathcal M$ and $\mbox{LexId}(M)\ne \emptyset$, then $M$ is local.

As an interesting corollary we have the following statement.

\begin{corollary}\label{co:lex}
If $\mbox{\rm LexId}(M)$ is non-empty and $s$ is a state on $M$, then $s$ vanishes on each lexicographic ideal of $M$.
\end{corollary}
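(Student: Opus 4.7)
The plan is to reduce the statement to the claim that every state on $M$ vanishes on $\Rad(M)$, and then use the Krein-Mil'man decomposition already recalled in Section~2.

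First, I would invoke Proposition~\ref{pr:loc5}: every lexicographic ideal $I$ of $M$ is contained in $\Rad(M)$. Hence it suffices to show that an arbitrary state $s$ on $M$ satisfies $s(x)=0$ for all $x\in\Rad(M)$. Note that the assumption $\mbox{LexId}(M)\neq\emptyset$ together with the existence of a state guarantees that $M$ has at least one maximal ideal that is also normal, so $\Rad_n(M)$ is well-defined and contains $\Rad(M)$ (the intersection $\bigcap\mathcal{M}(M)$ is taken over a larger family than $\bigcap(\mathcal{N}(M)\cap\mathcal{M}(M))$).

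Second, I would handle extremal states. If $s_0\in\partial_e\mathcal{S}(M)$, then by the characterization recalled in Section~2, $\Ker(s_0)$ is a maximal (necessarily normal) ideal of $M$. Since $\Rad(M)=\bigcap\{J:J\in\mathcal{M}(M)\}\subseteq\Ker(s_0)$, we get $s_0(x)=0$ for every $x\in\Rad(M)$, and in particular for every $x\in I$.

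Finally, I would pass from extremal states to arbitrary ones by the Krein-Mil'man argument cited in the paper: every state $s$ on $M$ is a weak limit of a net $\{s_\alpha\}_\alpha$ of convex combinations of extremal states. For each such convex combination $s_\alpha=\sum_i\lambda_{\alpha,i}s_{\alpha,i}$ and each $x\in I\subseteq\Rad(M)$, Step~2 gives $s_{\alpha,i}(x)=0$, hence $s_\alpha(x)=0$; passing to the weak limit yields $s(x)=\lim_\alpha s_\alpha(x)=0$. No step should pose a genuine obstacle: the only subtlety is checking that a general (not necessarily extremal) state still annihilates $\Rad(M)$, which is precisely what the Krein-Mil'man representation is used for.
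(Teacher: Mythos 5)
Your proof is correct and follows essentially the same route as the paper: reduce to the extremal case via the Krein--Mil'man weak-limit representation, and for an extremal state $s_0$ use Proposition \ref{pr:loc5} to get $I\subseteq\Rad(M)\subseteq\Ker(s_0)$. The side remark about $\Rad_n(M)$ is harmless but not needed for the argument.
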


\begin{proof}
Let $I$ be a lexicographic ideal of $M$. First let $s$ be an extremal state. Then $\Ker(s)$ is by \cite[Prop 4.3]{156} a maximal ideal. Hence, by Proposition \ref{pr:loc5}, we have $I \subseteq \Ker(M) \subseteq \Ker(s)$, so that each extremal state vanishes on $I$. Therefore,  each convex combination of extremal states, and by Krein--Mil'man Theorem,  each state on $M$ vanishes on $I$.
\end{proof}

A strengthening of the latter corollary for lexicographic pseudo MV-algebras $M$ from $\mathcal M$ will be done in Corollary \ref{co:state2} showing that then $M$ has a unique state.

Now we present a prototypical examples of a pseudo MV-algebra with lexicographic ideal.

\begin{proposition}\label{pr:loc6}
Let $(H,u)$ be an Abelian linear unital $\ell$-group and let $G$ be an $\ell$-group. If we set $I=\{(0,g): g \in G^+\}$, then $I$ is a lexicographic ideal of $M=\Gamma(H\lex G,(u,0))$.

In addition, $M$ is subdirectly irreducible if and only if $G$ is a subdirectly irreducible $\ell$-group.
\end{proposition}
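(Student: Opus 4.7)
The plan is to split the two assertions, relying on Theorem~\ref{th:3.2} for the first and on the categorical equivalence of Theorem~\ref{th:2.1} together with a classification of $\ell$-ideals of a lexicographic product for the second; throughout I tacitly assume $H$ and $G$ are non-trivial so that $\{0\}\ne I\ne M$ and subdirect irreducibility of $G$ is meaningful. For the first assertion, I would use the canonical $(H,u)$-decomposition of $M=\Gamma(H\lex G,(u,0))$ (with $M_t=\{(t,g):(t,g)\in M\}$), under which $I$ coincides with $M_0$. Then Theorem~\ref{th:3.2}(vi), (vii), and (ix) already give normality, the quotient isomorphism $M/I\cong\Gamma(H,u)$, and primeness of $I$. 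Commutativity of $I$ follows because $H$ is Abelian, so $M/I\cong\Gamma(H,u)$ is an MV-algebra. Strictness is immediate from the strengthened form of condition~(a) recorded just after Definition~\ref{de:3.1}: if $x/I<y/I$, then $x\in M_s$ and $y\in M_t$ with $s<t$, whence $x<y$. Retractiveness is witnessed by $\delta_I\colon\Gamma(H,u)\to M$, $h\mapsto(h,0)$, which is clearly a pseudo MV-algebra homomorphism with $\pi_I\circ\delta_I=\mathrm{id}$. Thus all four defining conditions of a lexicographic ideal are verified.

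For the second assertion, by Theorem~\ref{th:2.1} the normal ideals of $M$ correspond, in a lattice-preserving bijection, to the $\ell$-ideals (convex normal $\ell$-subgroups) of $H\lex G$; hence $M$ is subdirectly irreducible iff $H\lex G$ admits a smallest non-trivial $\ell$-ideal. The heart of the argument is the classification of $\ell$-ideals of the lexicographic product of a linearly ordered Abelian $H$ with an arbitrary $\ell$-group $G$: I expect them to be precisely the subgroups of the form $\{0\}\lex N$, with $N$ an $\ell$-ideal of $G$, together with $J\lex G$, with $J$ a convex subgroup of $H$.

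Establishing this classification is the main obstacle. Given an $\ell$-ideal $K$ of $H\lex G$, the plan is to set $J:=\{h\in H:(h,0)\in K\}$ (routinely a convex subgroup of $H$) and to split into cases. If $J=\{0\}$, one verifies $K\subseteq\{0\}\lex G$ and $K=\{0\}\lex N$ for $N:=\{g\in G:(0,g)\in K\}$, an $\ell$-ideal of $G$. If $J\ne\{0\}$, one picks a positive $h\in J$; since every element $(0,g')$ of $\{0\}\lex G$ satisfies $-(h,0)<(0,g')<(h,0)$ lexicographically, convexity of $K$ forces $\{0\}\lex G\subseteq K$, whence $K=J\lex G$. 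Once the classification is in hand, since $\{0\}\lex N\subseteq\{0\}\lex G\subseteq J\lex G$ for every non-trivial $J$, the smallest non-trivial $\ell$-ideal of $H\lex G$ (if any) must be of the form $\{0\}\lex N_0$, and such exists precisely when $G$ has a smallest non-trivial $\ell$-ideal $N_0$, that is, precisely when $G$ is subdirectly irreducible.
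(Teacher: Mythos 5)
Your proposal is correct and follows essentially the same route as the paper: the lexicographic-ideal conditions (commutativity, strictness, retractiveness via $h\mapsto(h,0)$, primeness) are checked against the canonical $(H,u)$-decomposition with $I=M_0$, exactly as in the paper's proof. The only real difference is in the second claim, which the paper dispatches in one sentence by citing Theorem \ref{th:2.1}; you supply the classification of $\ell$-ideals of $H\lex G$ (namely $\{0\}\lex N$ and $J\lex G$) that this citation tacitly presupposes, and your convexity arguments for it are sound. Your explicit non-triviality hypotheses on $H$ and $G$ are in fact needed for the statement to be literally true (if $G$ is trivial, $I=\{0\}$ is excluded by the definition of a lexicographic ideal, and the subdirect-irreducibility equivalence can fail), so making them is a small improvement rather than a defect.
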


\begin{proof}
It is clear that $I$ is a normal ideal of $M$ as well as it is prime.

We have $x/I =0/I$ iff $x \in I$.
Assume $(0,g)/I < (h,g')/I$. Then $(h,g) \notin I$ that yields $h>0$ and $(0,g)<(h,g')$. Hence, if $x/I <y/I$, then $(y-x)/I > 0/I$ and $y-x>0$ and $x<y$.

Since $M/I\cong\Gamma(H\lex \{0\},(u,0)) \subseteq \Gamma(H\lex G,(u,0))$, we see that $I$ is retractive. Finally, let $y \in I$ and  $x \in M \setminus \langle I\rangle$. Then $\langle I \rangle = I \cup I^-$ and $x = (h,g')$ for some $h$ with $0<h<u$ and $g'\in G$. Then $y=(0,g)$ and hence, $y<x<y^-$.

The statement on subdirect irreducibility follows from the categorical representation of pseudo MV-algebras, Theorem \ref{th:2.1}.
\end{proof}

\begin{theorem}\label{th:local1}
Let $M$ be a symmetric pseudo MV-algebra from $\mathcal M$ and let $I$ be a lexicographic ideal of $M$. Then there is an Abelian linear unital $\ell$-group $(H,u)$ and an $\ell$-group $G$ such that $M \cong \Gamma(H\lex G,(u,0))$.
\end{theorem}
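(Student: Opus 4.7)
The plan is to show that $M$ is a strong $(H,u)$-perfect pseudo MV-algebra for a suitable Abelian linear unital $\ell$-group $(H,u)$ and then invoke Theorem~\ref{th:3.5}. Commutativity and primality of $I$ force the quotient $M/I$ to be an Abelian linearly ordered pseudo MV-algebra, so by Theorem~\ref{th:2.1} there is an Abelian linear unital $\ell$-group $(H,u)$ with $M/I\cong\Gamma(H,u)$; I fix this identification. For each $t\in[0,u]_H$ set $M_t:=\pi_I^{-1}(\{t\})$, where $\pi_I\colon M\to M/I$ is the quotient map. Strictness of $I$ gives $M_s\leqslant M_t$ whenever $s<t$, and the homomorphism properties of $\pi_I$ supply $M_t^-=M_{u-t}=M_t^\sim$ together with $x\oplus y\in M_{v\oplus t}$ for $x\in M_v$, $y\in M_t$. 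Hence $(M_t:t\in[0,u]_H)$ is an $(H,u)$-decomposition and $M$ is $(H,u)$-perfect.

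Next I would exploit retractivity. Write $M=\Gamma(K,v)$ for the representing unital $\ell$-group, and pick a pseudo MV-algebra homomorphism $\delta\colon M/I\to M$ with $\pi_I\circ\delta=\mathrm{id}_{M/I}$. By Theorem~\ref{th:2.1}, $\delta$ extends uniquely to a unital $\ell$-group homomorphism $\hat\delta\colon(H,u)\to(K,v)$. Setting $c_t:=\delta(t)$ produces elements with $c_t\in M_t$, $c_0=0$, $c_1=1$, and $c_v+c_t=c_{v+t}$ whenever $v+t\le u$ (the last because $\hat\delta$ is additive). Thus $(c_t)$ meets every requirement of the strong cyclic property except, possibly, the centrality condition $c_t\in C(K)$.

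The centrality step is the main obstacle, and this is where symmetry of $M$ enters. Since $M$ is symmetric, $v\in C(K)$, so for any $x\in K$ conjugation $y\mapsto x+y-x$ is an order automorphism of $K$ that preserves $M=[0,v]$; because $M/I$ is Abelian it fixes each $M_t$ setwise, so $(x+c_t-x)_t$ likewise arises from a retraction. The aim is to show that $\delta$ can be chosen so that $\hat\delta(H)\subseteq C(K)$. Let $\tilde I\lhd K$ be the convex normal $\ell$-subgroup corresponding to $I$; since $K/\tilde I\cong H$ is Abelian one has $[K,K]\subseteq\tilde I$, so the defect $\phi_x(t):=x+\hat\delta(t)-x-\hat\delta(t)=[x,\hat\delta(t)]$ always lies in $\tilde I$ and $t\mapsto\phi_x(t)$ is a $1$-cocycle $H\to\tilde I$. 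The combination of symmetry with the freedom available in the splitting of the short exact sequence $0\to\tilde I\to K\to H\to 0$ (different splittings differ by such cocycles modulo coboundaries) is used to trivialize $\phi_x$ by a coboundary, producing a retraction with central image. Once centrality is secured, $(c_t:t\in[0,u]_H)$ is a strong cyclic family, $M$ is strong $(H,u)$-perfect, and Theorem~\ref{th:3.5} delivers an $\ell$-group $G$ with $M\cong\Gamma(H\lex G,(u,0))$.
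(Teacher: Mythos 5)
Your first two steps coincide with the paper's: the fibers $M_t=\pi_I^{-1}(\{t\})$ form an $(H,u)$-decomposition because $I$ is prime, commutative and strict, and retractivity supplies a family $c_t=\delta(t)$ satisfying every clause of the strong cyclic property except centrality. The gap is precisely where you locate it, at $c_t\in C(K)$, and your proposed resolution does not close it. The sentence asserting that the freedom in splitting $0\to\tilde I\to K\to H\to 0$ ``is used to trivialize $\phi_x$ by a coboundary'' is a declaration of intent with no supporting argument: you give no reason why the cocycle $\phi_x$ should be a coboundary, and, more seriously, even if each $\phi_x$ were one, the trivializing modification of $\delta$ would a priori depend on $x$, whereas centrality of $\hat\delta(H)$ requires a \emph{single} new splitting killing $\phi_x$ for \emph{all} $x\in K$ simultaneously. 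You would further have to check that the modified splitting remains order-preserving and still restricts to a pseudo MV-algebra homomorphism $M/I\to M$. None of this is automatic: retractions are genuinely non-unique (for $\mathbb H=\mathbb Z+\mathbb Z\sqrt2$ the algebra $\Gamma(\mathbb H\lex\mathbb Z,(1,0))$ admits the splitting $m+n\sqrt2\mapsto(m+n\sqrt2,n)$ besides the canonical one), so the choice of splitting genuinely matters and must be controlled.

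The paper avoids all of this by showing that the retraction you already have is central. Exactly as in your own intermediate observation, symmetry makes conjugation $y\mapsto x+y-x$ an automorphism of $K$ preserving $M$, and since $H$ is Abelian, $\hat\pi_I(x+c_t-x)=t$, so $x+c_t-x$ again lies in $M_t$ and is the image of $t$ under the conjugated retraction; the paper then identifies $x+c_t-x$ with $c_t$ as the unique element of the retract subalgebra lying over $t$, which gives $x+c_t=c_t+x$ directly. If you replace your cohomological paragraph by an argument of this shape (prove $x+c_t-x=c_t$ for the $c_t$ you already constructed), the rest of your proof, deducing strong $(H,u)$-perfectness and invoking Theorem \ref{th:3.5}, goes through as written.
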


\begin{proof}
Similarly as in the proof of Theorem \ref{th:local}, we can assume that $M=\Gamma(K,v)$ for some unital $\ell$-group $(K,v)$. Since $I$ is lexicographic, then $I$ is normal and prime, so that $M/I$ is a linear, and since $I$ is also commutative, $M/I$ is an MV-algebra.  There is an Abelian linear unital $\ell$-group $(H,u)$ such that $M/I\cong \Gamma(H,u)$.

Let $\pi_I:M\to M/I$ be the canonical projection. For any $t \in [0,u]_H$, we set $M_t:=\pi_I^{-1}(\{t)\}$. We assert that $(M_t: t \in [0,u]_H)$ is an $(H,u)$-decomposition of $M$. Indeed, (a) let $x \in M_s$ and $y \in M_t$ for $s<t,$ $s,t \in [0,u]_H$. Then $\pi_I(x)=s <t<\pi(y)$ and $x<y$ because $I$ is strict. (b) Since $\pi_I$ is a homomorphism, $M_t^-=M_{u-t}=M_t^\sim$
for each $t \in [0,u]_H$. (c) Let $x\in M_s$ and $y \in M_t$, then $\pi_I(x\oplus y)=\pi_I(x)\oplus \pi_I(y)=s\oplus t.$

In addition, $\langle I\rangle = I \cup I^-=I \cup I^-$, $I^-=I^\sim$, and $\langle I\rangle$ is a perfect pseudo MV-algebra. By \cite[Prop 5.2]{DDT}, there is a unique (up to isomorphism) $\ell$-group $G$ such that $\langle I\rangle \cong \Gamma(\mathbb Z\lex G,(1,0))$.

Now we show that $(M_t: t \in [0,u]_H)$ has the strong cyclic property. Being $I$ also retractive, there is a subalgebra $M'$ of $M$ such that $M'\cong M/I$ and $\pi_I(M')=\pi_I(M)$. Then $M'$ is in fact an MV-algebra. For any $t \in [0,u]_H$, there is a unique $c_t \in M_t$ such that $\pi_I(c_t)=t.$ We assert that the system of elements $(c_t: t \in [0,u]_H)$ has the following properties: (i) $c_t \in M_t$, (ii) if $s+t \le u$, then $c_s+c_t \in M$ and $c_s+c_t=c_{s+t}$, (iii) $c_1=1$, and (iv) $c_t \in C(K)$ for each $t\in [0,u]_H$; indeed let $x \in K$. Being $M$ symmetric, the element $x +c_t - x\in H$ belongs also to $M$.  Due to the categorical equivalence, Theorem \ref{th:2.1}, the homomorphism $\pi_I$ can be uniquely extended to a homomorphism $\hat \pi_I: (K,v)\to (H,u)$ of unital $\ell$-groups. Hence, $\pi_I(x+c_t-x)= \hat \pi_I(x +c_t-x)= \hat\pi_I(x) +\hat\pi_I(c_t)-\hat\pi_I(x)= \pi_I(c_t)=t$ which implies $c_t = x +c_t-x$ and $x+c_t = c_t+x$.

Consequently, $M$ is a strong $(H,u)$-perfect pseudo MV-algebra. By Theorem \ref{th:3.5}, there is an $\ell$-group $G'$ such that $M\cong \Gamma(H \lex G',(u,0))$. By uniqueness (up to isomorphism of $\ell$-groups) of $G'$ in Theorem \ref{th:3.5}, we have $G' \cong G$ and consequently $M \cong \Gamma(H\lex G,(u,0))$.
\end{proof}

According to the latter theorem and Proposition \ref{pr:loc6}, we see that our notion of a lexicographic pseudo MV-algebra for symmetric pseudo MV-algebras from $\mathcal M$ coincides with the notion of one defined for MV-algebras in \cite{DFL} as those having at least one lexicographic ideal.

In the following result we compare the class of local pseudo MV-algebras with the class of lexicographic pseudo MV-algebras.

\begin{theorem}\label{compar}
{\rm (1)} The class of lexicographic pseudo MV-algebras from $\mathcal M$ is strictly included in the class of symmetric local pseudo MV-algebras.

{\rm (2)} The class of symmetric local pseudo MV-algebras with retractive radical is strictly included in the class of lexicographic pseudo MV-algebras from $\mathcal M$.
\end{theorem}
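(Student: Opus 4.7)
My plan is to handle (1) and (2) separately, each by verifying the claimed containment and then exhibiting a strictness witness.

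For the inclusion in (1), suppose $M$ is a lexicographic pseudo MV-algebra from $\mathcal M$, so $M$ admits a lexicographic ideal. Theorem~\ref{th:local1} then yields $M \cong \Gamma(H\lex G,(u,0))$ with $(H,u)$ Abelian linearly ordered unital and $G$ an $\ell$-group; because $H$ is Abelian, $(u,0)$ is central in $H\lex G$, so $M$ is symmetric, while the forward reference Theorem~\ref{th:variety}(1) provides locality. For strictness in (1), I would take $M=\Gamma(\mathbb R,1)$: as a simple MV-algebra it is symmetric, in $\mathcal M$, and local (with unique maximal ideal $\{0\}$), but it has no proper nonzero ideal and therefore no lexicographic ideal.

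For the inclusion in (2), suppose $M$ is symmetric, local, from $\mathcal M$ and has retractive radical (which is then necessarily nontrivial, the containment being otherwise vacuous). Theorem~\ref{th:local} supplies $M\cong \Gamma(\mathbb H\lex G,(1,0))$ for some subgroup $\mathbb H$ of $\mathbb R$ with $1\in \mathbb H$ and some nontrivial $\ell$-group $G$, and Proposition~\ref{pr:loc6} then identifies $\{(0,g):g\in G^+\}$ as a lexicographic ideal of $M$, showing that $M$ is lexicographic.

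The strictness in (2) is the main obstacle. A preliminary lemma I would prove is that in a lexicographic representation $M=\Gamma(H\lex G,(u,0))$, retractivity of $\Rad(M)$ is equivalent to the existence of an ordered section of the short exact sequence
\[
0 \longrightarrow \Rad_H \longrightarrow H \longrightarrow \bar H \longrightarrow 0
\]
of linearly ordered Abelian groups, where $\bar H = H/\Rad_H$ is the Archimedean quotient; indeed a retraction of $\Rad(M)$ restricts to such a section on the $H$-component, while conversely an ordered section lifts $\Gamma(\bar H,\bar u)$ to a subalgebra of $M$ splitting the canonical projection. With this in hand, I would exhibit an Abelian linearly ordered unital $(H,u)$ whose Archimedean quotient does not lift to an ordered subgroup of $H$---for instance, by ordering an abstractly non-split Abelian extension so that the bottom factor becomes convex---and set $M=\Gamma(H\lex G,(u,0))$ for any nontrivial $\ell$-group $G$. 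Then $M$ is lexicographic by Proposition~\ref{pr:loc6}, but since $\Rad(M)$ is not retractive, Theorem~\ref{th:local} excludes $M$ from the left-hand class of (2).
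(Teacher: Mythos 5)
Your two inclusions are set up correctly (Theorem~\ref{th:local1} for (1); Theorem~\ref{th:local} plus Proposition~\ref{pr:loc6} for (2)), but the step ``Theorem~\ref{th:variety}(1) provides locality'' is circular: the remark in the paper pointing to Theorem~\ref{th:variety}(1) is a mislabelled forward reference to the locality claim of the \emph{present} theorem, and Theorem~\ref{th:variety}(1) itself only concerns homomorphic images. You must argue locality directly, as the paper does: the lexicographic ideal $I$ of $M'\cong\Gamma(H\lex G,(u,0))$ is prime and, by Proposition~\ref{pr:loc5}, contained in $\Rad_n(M')$, hence $\Rad_n(M')$ is prime, $M'/\Rad_n(M')$ is linearly ordered and semisimple, hence simple, so $\Rad_n(M')$ is a normal maximal ideal and $M'$ is local. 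Your witness $\Gamma(\mathbb R,1)$ for strictness in (1) works if ``lexicographic'' is read as ``possesses a lexicographic ideal'' (a simple algebra has no proper nonzero ideal), but under the paper's Section~4 definition it equals $\Gamma(\mathbb R\lex O,(1,0))$ and is therefore lexicographic; the paper's witness $\Gamma(\mathbb Z\lex\mathbb Z,(2,1))$ is immune to this ambiguity. The same ambiguity infects your parenthetical dismissal of the trivial-radical case in (2), which does not actually dispose of it.

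The more serious problem is your preliminary lemma for strictness in (2): as stated it is false, because a retraction $\delta_I:M/I\to M$ is a homomorphism of pseudo MV-algebras and must preserve the unit, so what is needed is a \emph{unital} ordered section, not merely an ordered section of $0\to\Rad_H\to H\to\bar H\to 0$. Concretely, take $H=\mathbb Z\lex\mathbb Z$ with $u=(2,1)$ and $G=\mathbb Z$: the sequence $0\to\mathbb Z\to\mathbb Z\lex\mathbb Z\to\mathbb Z\to 0$ splits as ordered groups via $n\mapsto(n,0)$, yet for $M=\Gamma((\mathbb Z\lex\mathbb Z)\lex G,((2,1),0))$ the radical is not retractive, since $M/\Rad_n(M)\cong\Gamma(\mathbb Z,2)$ and a copy of it inside $M$ would require an element $c$ with $c+c=((2,1),0)$, which forces $2k=1$ in $\mathbb Z$. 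Once the unit is accounted for, this very algebra already witnesses strictness in (2): by Proposition~\ref{pr:loc6} the ideal $\{((0,0),g):g\in G^+\}$ is lexicographic, so $M$ is a lexicographic pseudo MV-algebra from $\mathcal M$, while Theorem~\ref{th:local} excludes it from the class with retractive radical. So the non-split abelian extension you invoke (and do not construct) is unnecessary, and the lemma must be repaired before your argument goes through. For comparison, the paper's own proof is terse: it cites \cite[Thm 4.7]{DFL} for both strictness witnesses and offers $\Gamma(\mathbb Z\lex\mathbb Z,(2,1))$ for (1); your instinct to build explicit witnesses is sound, but the one for (2) has to be the unital obstruction above rather than an ordered-group extension problem.
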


\begin{proof}
(1)  Let $M$ be a lexicographic pseudo MV-algebra from $\mathcal M$. By Theorem \ref{th:local1}, $M$ is symmetric and it is isomorphic to some $M':=\Gamma(H\lex G,(u,0))$, where $(H,u)$ is an Abelian unital $\ell$-group and $G$ is an $\ell$-group. Then the ideal $I=\{(0,g): g \in G^+\}$ is by Proposition \ref{pr:loc6} a retractive ideal of $M'$. By Proposition \ref{pr:loc5}, we have $I \subseteq \Rad(M')=\Rad_n(M')$. Since $I$ is prime, so is $\Rad_n(M')$ which yields $M'/\Rad_n(M')$ is linearly ordered and semisimple. Hence, $M'/\Rad_n(M')$ is a simple MV-algebra. Therefore, by \cite[Prop 3.3-3.5]{156}, $\Rad_n(M)$ is a maximal ideal which yields that $M'$ is local and, consequently $M$ is local.

To show that  the class of lexicographic pseudo MV-algebras from $\mathcal M$ is strictly included in the class of symmetric local pseudo MV-algebras, we can use an example from the proof of \cite[Thm 4.7]{DFL} or the pseudo MV-algebra $\Gamma(\mathbb Z \lex \mathbb Z,(2,1))$ that has no lexicographic ideal.

(2) By Theorem \ref{th:local}, we get that the class of symmetric local pseudo MV-algebras with retractive radical is strictly included in the class of lexicographic pseudo MV-algebras from $\mathcal M$. Using an example from \cite[Thm 4.7]{DFL}, we conclude that this inclusion is proper.
\end{proof}

The latter result entails the following corollary.

\begin{corollary}\label{co:state2}
Every lexicographic pseudo MV-algebra from $\mathcal M$ admits a unique state.
\end{corollary}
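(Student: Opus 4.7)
The plan is to combine local\-ity of $M$ with the bijection between maximal normal ideals and extremal states, and then conclude with Krein--Mil'man. First I would note that by Theorem \ref{compar}(1) every lexicographic pseudo MV-algebra $M$ from $\mathcal M$ is symmetric and local, so there is a unique maximal ideal $I_{max}$ of $M$, and by the defining property of $\mathcal M$ (or by locality itself as used in the paper) this ideal is normal.

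Next I would invoke the characterization recalled in Section 2: for pseudo MV-algebras, a normal ideal $J$ is maximal if and only if $J=\Ker(s)$ for some extremal state $s$, and moreover the map $s\mapsto \Ker(s)$ is a bijection between $\partial_e\mathcal S(M)$ and the set of maximal normal ideals. Since $M$ has exactly one maximal normal ideal, $\partial_e\mathcal S(M)$ is a singleton $\{s_0\}$. In particular $M$ admits at least one state (as noted in Section 2, having at least one maximal normal ideal is equivalent to admitting a state, and $s_0$ can concretely be obtained by composing the canonical projection $M\to M/I_{max}$ with the unique state on the linearly ordered MV-algebra $M/I_{max}\cong \Gamma(\mathbb H,1)$ provided by \cite[Thm 5.5]{156}).

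Finally, I would apply the Krein--Mil'man argument already quoted in Section 2: the state space $\mathcal S(M)$ is a compact convex Hausdorff space and every state is a weak limit of a net of convex combinations of extremal states. But any convex combination (and any weak limit of such combinations) of elements of the singleton $\{s_0\}$ is again $s_0$; hence $\mathcal S(M)=\{s_0\}$, which is exactly the uniqueness claim.

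No step looks genuinely hard: the work is packaged in the earlier results, and the only subtlety is making sure $M$ truly has at least one state before applying Krein--Mil'man. That existence could alternatively be seen directly from the lexicographic representation $M\cong\Gamma(H\lex G,(u,0))$ of Theorem \ref{th:local1}: the projection onto the first coordinate induces a pseudo MV-algebra homomorphism $M\to\Gamma(H,u)$, and composing with the unique state of the linearly ordered $\Gamma(H,u)$ produces a state on $M$ whose kernel contains the lexicographic ideal, in agreement with Corollary \ref{co:lex}.
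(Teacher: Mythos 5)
Your proposal is correct and follows essentially the same route as the paper: Theorem \ref{compar}(1) gives locality, and the one-to-one correspondence between extremal states and maximal normal ideals (together with the Krein--Mil'man fact quoted in Section 2) yields uniqueness. You merely make explicit two steps the paper leaves implicit, namely the existence of at least one state and the passage from a unique extremal state to a unique state via convex combinations and weak limits.
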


\begin{proof}
If $M$ is a lexicographic pseudo MV-algebra from $\mathcal M$, by (i) of Theorem \ref{compar}, we see that $M$ is local, that is, it has a unique maximal ideal and this ideal is normal. Due to a one-to-one relation between extremal states and maximal and normal ideals of $M$, \cite{156}, we conclude $M$ admits a unique state.
\end{proof}

The following result gives a new look to Theorem \ref{th:local}.

\begin{theorem}\label{compar1}
Let $M$ be a lexicographic symmetric pseudo MV-algebra from $\mathcal M$. The following statements are equivalent:
\begin{enumerate}
\item[{\rm (i)}] $\Rad_n(M)$ is a lexicographic ideal.
\item[{\rm (ii)}] $M$ is strongly $(\mathbb H,1)$-perfect for some unital $\ell$-subgroup $(\mathbb H,1)$ of $(\mathbb R,1)$.
\end{enumerate}
\end{theorem}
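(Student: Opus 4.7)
The plan is to chain together the representation theorem (Theorem~\ref{th:3.5}), the structural results on local pseudo MV-algebras with retractive radical (Theorem~\ref{th:local}), the prototypical example (Proposition~\ref{pr:loc6}), and the comparison result (Theorem~\ref{compar}). Both directions collapse, once phrased correctly, into applying a theorem already in the paper.

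For (i)~$\Rightarrow$~(ii): Since $M$ is a lexicographic pseudo MV-algebra from $\mathcal M$, Theorem~\ref{compar}(1) tells us that $M$ is local. The hypothesis that $\Rad_n(M)$ is a lexicographic ideal includes, as part of the definition, retractivity. Hence both conditions appearing in (i) of Theorem~\ref{th:local} hold, and the equivalence there delivers a subgroup $\mathbb H$ of $\mathbb R$ with $1\in \mathbb H$ such that $M$ is strong $(\mathbb H,1)$-perfect.

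For (ii)~$\Rightarrow$~(i): Assume $M$ is strong $(\mathbb H,1)$-perfect. By Theorem~\ref{th:3.5} there is an $\ell$-group $G$ with $M\cong \Gamma(\mathbb H \lex G,(1,0))$, and under this isomorphism the component $M_0$ of the $(\mathbb H,1)$-decomposition corresponds to $I:=\{(0,g):g\in G^+\}$. By Proposition~\ref{pr:loc6}, $I$ is a lexicographic ideal of $\Gamma(\mathbb H\lex G,(1,0))$. It remains to identify $I$ with $\Rad_n(M)$. Since $(\mathbb H,1)$ embeds in $(\mathbb R,1)$, Theorem~\ref{th:3.4} gives that $M_0$ is a maximal ideal; moreover it is normal (Theorem~\ref{th:3.2}(vi)). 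Because $M$ is lexicographic from $\mathcal M$, Theorem~\ref{compar}(1) forces $M$ to be local, so its unique maximal (normal) ideal is $\Rad_n(M)$. This unique ideal must coincide with the maximal normal ideal $M_0$; equivalently, transporting along the isomorphism, $\Rad_n(M)$ corresponds to $I$, and is therefore lexicographic.

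The only genuinely delicate point is the identification $\Rad_n(M) \cong I$, which is why one needs Theorem~\ref{th:3.4} (to upgrade $M_0$ from merely normal to maximal, using $\mathbb H\subseteq \mathbb R$) together with Theorem~\ref{compar}(1) (to know there is only one maximal normal ideal to accommodate). Once these two ingredients are in place the proof reduces to bookkeeping with the isomorphism provided by Theorem~\ref{th:3.5}. No further computation with decompositions or strong cyclic families is needed, since all of that work has already been done upstream.
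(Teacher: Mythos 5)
Your proof is correct, and it differs from the paper's in a useful way. For (ii) $\Rightarrow$ (i) the two arguments are essentially the same: both exhibit the canonical ideal $I=\{(0,g):g\in G^+\}$ of $\Gamma(\mathbb H\lex G,(1,0))$ as lexicographic via Proposition \ref{pr:loc6} and then identify it with $\Rad_n(M)$ by showing it is a normal maximal ideal; the paper pins down the identification through the two inclusions $I\subseteq \Rad_n(M)$ (Propositions \ref{pr:loc5}--\ref{pr:loc6}) and $\Rad_n(M)\subseteq I$ ($I$ being maximal and normal), whereas you use locality from Theorem \ref{compar}(1) together with the maximality of $M_0$ from Theorem \ref{th:3.4} --- both are sound. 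For (i) $\Rightarrow$ (ii) you take a genuinely shorter route: rather than re-deriving, as the paper does, that $M/\Rad_n(M)$ is semisimple and linearly ordered (hence isomorphic to some $\Gamma(\mathbb H,1)$) and then reassembling the representation, you observe that ``$M$ local'' (supplied by Theorem \ref{compar}(1)) together with ``$\Rad_n(M)$ retractive'' (part of the definition of a lexicographic ideal) is exactly clause (i) of Theorem \ref{th:local}, whose equivalence with strong $(\mathbb H,1)$-perfectness was already established in Section 5. There is no circularity, since Theorems \ref{th:local} and \ref{compar} precede this statement and do not depend on it. What your reduction buys is brevity and the observation that, for a lexicographic $M$ from $\mathcal M$, only the retractivity of $\Rad_n(M)$ is actually needed in (i); what the paper's version buys is an explicit view of why primeness of $\Rad_n(M)$ forces the quotient to be linearly ordered and hence a subalgebra of $\Gamma(\mathbb R,1)$.
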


\begin{proof}
Let $M\cong \Gamma(H\lex G,(u,0))$ for some Abelian unital $\ell$-group $(H,u)$ and an $\ell$-group $G$ and let $I$ be a retractive ideal of $M$ such that $M/I\cong \Gamma(H,u)$. By Proposition \ref{pr:loc6}, $I \subseteq \Rad_n(M)$.

(i) $\Rightarrow $ (ii)
If $\Rad_n(M)$ is a retractive ideal, then $M/\Rad_n(M)$ is a semisimple MV-algebra that is linearly ordered because $\Rad_n(M)$ is a prime normal ideal. Again applying  by \cite[Prop 3.4-3.5]{156}, $M/\Rad_n(M) \cong \Gamma(H,u)$ and $\Gamma(H,u)$ is isomorphic to some $(\mathbb H,1)$.

(ii) $\Rightarrow $ (i) Since $M/I \cong \Gamma(\mathbb H,1)$,
as a consequence of \cite[Prop 3.4-3.5]{156}, we get $I$ is a maximal ideal of $M$. Hence, $I =\Rad_n(M)$ and $I$ is a lexicographic ideal of $M$ and $M\cong \Gamma(\mathbb H\lex G,(1,0))$.
\end{proof}

We say that a pseudo MV-algebra $M$ from $\mathcal M$ is $I$-{\it representable} if $I$ is a lexicographic ideal of $M$ and $M\cong \Gamma(H\lex G, (u,0))$, where $(H,u)$ is an Abelian unital $\ell$-group such that $M/I \cong \Gamma(H,u)$ and $G$ is an $\ell$-group such that $\langle I\rangle \cong \Gamma(\mathbb Z \lex G,(1,0))$;  the existences of $(H,u)$ and $G$ are guaranteed by Theorem \ref{th:local1}.

\begin{theorem}\label{th:variety}
The class of lexicographic pseudo MV-algebras from $\mathcal M$ is closed under homomorphic images and subalgebras, but it is not closed under direct products.

Moreover, {\rm (1)} if $N$ is a homomorphic image of $M$, then $N\cong \Gamma(H_1\lex G_1,(u_1,0))$, where $(H_1,u_1)$ and $G_1$ are homomorphic images of $(H,u)$ and $G$, respectively.

{\rm (2)} If $N$ is a subalgebra of $M$, then $N\cong \Gamma(H_0\lex G_0,(u_0,0))$, where $(H_0,u)$ and $G_0$ are subalgebras of $(H,u)$ and $G$, respectively.

\end{theorem}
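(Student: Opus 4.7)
By Theorem \ref{th:local1}, I may assume $M = \Gamma(H \lex G, (u,0))$ for some Abelian linearly ordered unital $\ell$-group $(H,u)$ and some $\ell$-group $G$. The plan is to transfer the three claims to the language of unital $\ell$-groups via the categorical equivalence of Theorem \ref{th:2.1}, and then exploit the fact that, because $H$ is linearly ordered, the $\ell$-subgroup structure of $H \lex G$ is extremely rigid. The key structural dichotomy I will use is: any convex $\ell$-subgroup of $H \lex G$ either lies inside $\{0\}\times G$ or contains $\{0\}\times G$ entirely; indeed, if $(h,g)$ lies in such a subgroup with $h \ne 0$, then $|(0, g')| \le |(h, g)|$ in the lex order for every $g' \in G$, so convexity forces $(0,g')$ to belong to it as well.

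For (1), let $\phi\colon M \to N$ be surjective with kernel $J$, corresponding under Theorem \ref{th:2.1} to a convex normal $\ell$-subgroup $\widetilde J$ of $H \lex G$. Applying the dichotomy, either $\widetilde J = \{0\} \times G'$ for a convex normal $\ell$-subgroup $G'$ of $G$, whence $N \cong \Gamma(H \lex (G/G'), (u,0))$, or $\widetilde J = H' \lex G$ for a convex subgroup $H'$ of $H$, whence $N \cong \Gamma(H/H', u+H') \cong \Gamma((H/H') \lex \{0\}, (u+H', 0))$. In either situation $N$ has the claimed lexicographic form with $(H_1, u_1)$ and $G_1$ homomorphic images of $(H, u)$ and $G$.

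For (2), let $K_0$ be the unital $\ell$-subgroup of $H \lex G$ generated by a subalgebra $N \subseteq M$, so that $N = \Gamma(K_0, (u,0))$, and set $H_0 := \pi_H(K_0)$ together with $G_0 := \{g : (0,g)\in K_0\}$. These are an $\ell$-subgroup of $H$ containing $u$ and an $\ell$-subgroup of $G$, respectively. My plan is to verify: (a) the fibre $\{g : (h,g)\in K_0\}$ is a single coset of $G_0$ for each $h \in H_0$, so that there is a well-defined group homomorphism $s\colon H_0 \to G/G_0$; (b) using $(u,0)\in K_0$ and the strong cyclic family $(c_h = (h,0))_{h \in [0,u]_H}$ of $M$, one normalizes an additive set-theoretic section $h \mapsto (h, \sigma(h)) \in K_0$ with $\sigma(u)=0$; and (c) this section yields a unital $\ell$-isomorphism $K_0 \cong H_0 \lex G_0$ carrying $(u,0)$ to $(u_0,0)$, giving the desired representation of $N$.

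For the failure of closure under direct products, it suffices to exhibit one counterexample, e.g.\ $C \times C$ with $C = \Gamma(\mathbb Z \lex \mathbb Z, (1,0))$ Chang's MV-algebra: $C$ is lexicographic, but a lexicographic ideal is by definition prime, and no non-trivial proper normal ideal of a direct product of non-trivial pseudo MV-algebras is prime, so $C \times C$ admits no lexicographic ideal and by Theorem \ref{th:local1} is not lexicographic. The main obstacle in the whole argument is step (b) in (2): producing an additive section $\sigma$ with $\sigma(u)=0$ when $N$ need not meet the cyclic family $\{(h,0) : h \in [0,u]_H\}$ at every level; the homomorphic-image case and the product counterexample should be routine once the $\ell$-group picture is in place.
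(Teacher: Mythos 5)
Your part (1) is correct and, if anything, more self-contained than the paper's: the paper pushes the lexicographic ideal $I$ forward along $f$ and checks by hand that $f(I)$ is again strict, prime, commutative and retractive before passing to the group picture, whereas you read everything off the dichotomy for convex normal $\ell$-subgroups of $H\lex G$; both routes work. Your part (3) reaches the right conclusion but through a false intermediate claim: a direct product does have non-trivial proper prime normal ideals (e.g.\ $C\times Q$ is prime in $C\times C$ whenever $Q$ is prime in $C$, since $y\wedge y'\in Q$ forces $y\in Q$ or $y'\in Q$). What is true is that every prime ideal of $C\times C$ contains $(1,0)$ or $(0,1)$ (because $(1,0)\wedge(0,1)=(0,0)$), hence is not contained in $\Rad(C\times C)=\Rad(C)\times \Rad(C)$; combining this with Proposition \ref{pr:loc5} (every lexicographic ideal lies in the radical) repairs your argument. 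The paper's own route via Corollary \ref{co:state2} (a lexicographic algebra has a unique state, while $C\times C$ has two extremal states) avoids ideals entirely and is the cleaner fix.

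The serious issue is part (2), and the obstacle you flag in step (b) is not a technicality you can engineer around: it is fatal both to your plan and to the statement as printed. Take $M=\Gamma(\mathbb Q\lex \mathbb Z,(1,0))$ and let $N$ be the subalgebra generated by $(1/2,1)$. Its unital $\ell$-group is $K_0=\{(m/2,n): n\equiv m \pmod 2\}$ with unit $(1,0)$, so $H_0=\frac{1}{2}\mathbb Z$ and $G_0=2\mathbb Z$, and an additive section $\sigma$ with $(h,\sigma(h))\in K_0$ and $\sigma(1)=0$ would need $\sigma(1/2)$ odd and $2\sigma(1/2)=0$, which is impossible. In fact $(K_0,(1,0))\cong(\mathbb Z\lex\mathbb Z,(2,1))$ via $(m/2,n)\mapsto (m,(n-m)/2+m)$, and the paper itself records (Section 9 and the proof of Theorem \ref{compar}) that $\Gamma(\mathbb Z\lex\mathbb Z,(2,1))$ has a non-retractive radical and no lexicographic ideal; consequently $N$ admits no representation $\Gamma(H_0\lex G_0,(u_0,0))$ with $H_0\le \mathbb Q$ and $G_0\le\mathbb Z$, and clause (2) of the theorem fails. (This $N$ is still ``lexicographic'' only in the degenerate sense $N\cong\Gamma(K_0\lex O,((1,0),0))$ with trivial second factor; replacing $\mathbb Z$ by $\mathbb Z\times\mathbb Z$ and taking the analogous subalgebra with group $\{(m/2,(n,k)): n\equiv m\pmod 2\}$ destroys even that escape, so closure under subalgebras fails outright.) You should not feel obliged to close this gap, because the paper's own proof of (2) breaks at exactly the same point: it asserts without justification that $\delta_I(N/I)\subseteq \delta_I(M/I)\cap N$, i.e.\ that the retraction of $M$ carries classes meeting $N$ back into $N$, and in the example above $\delta_I\bigl((1/2,1)/I\bigr)=(1/2,0)\notin N$.
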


\begin{proof}
Let $I$ be a lexicographic ideal of $M$ such that $M$ is $I$-representable.

(1) Let $f:M\to N$ be a surjective homomorphism. Then $N$ is symmetric and from $\mathcal M$ whilst $\mathcal M$ is a variety. If we set $f(I)=\{f(x): x \in I\}$, then $f(I)$ is a normal ideal of $N=f(M)$ that is also commutative, prime and strict. We claim that $f(I)$ is a retractive ideal, too. Let $\pi_I: M\to M/I$ be the canonical projection and let $\delta_I:M/I\to M$ be a homomorphism such that $\pi_I \circ \delta_I = id_{M/I}$. Let $M_0=\delta_I(M/I)$ be a subalgebra of $M$ that is isomorphic to $M/I$. If we define $\hat f: M/I \to N/f(I)$  by $\hat f(x/I)= f(x)/f(I)$, then $\hat f$ is a well-defined homomorphism such that $\hat f \circ \pi_I= \pi_{f(I)}\circ f$. Set $N_0=f(M_0)$ and let $f_{M_0}$ be the restriction of $f$ onto $M_0$. We define $\delta_{f(I)}: N/f(I) \to N$ via $\delta_{f(I)}(f(x)/f(I)):= f_{M_0}(\delta_I(x/I))$; then $\delta_{f(I)}$ is a well-defined homomorphism such that $\delta_{f(I)}(N/f(I))=N_0$ and $f_{M_0} \circ \delta_I = \delta_{f(I)} \circ \hat f$. Hence,

\begin{eqnarray*}
\pi_{f(I)}\circ \delta_{f(I)}(f(x)/f(I)) &=& \pi_{f(I)} \circ f_{M_0} \circ \delta_I(x/I)\\
&=& \hat f \circ \pi_I \circ \delta_I(x/I)= \hat f(x/I)\\
&=& f(x)/f(I)
\end{eqnarray*}
that proves $f(I)$ is a retractive ideal of $N$.

Take the unital representation of pseudo MV-algebras given by Theorem \ref{th:2.1}, and let $N\cong \Gamma(K,v)$ and let $f:(H\lex G,(u,0)) \to (K,v)$ be a surjective homomorphism of unital $\ell$-groups.  Let $f_1(h)=f(h,0)$, $h \in H$, and $f_2(g)=f(0,g)$, $g \in G$. If we set $H_1:=f_1(H)$, $u_1=f(u,0)$, and $G_1:=f_2(G)$. Then $N \cong \Gamma(H_1\lex G_1,(u_1,0))$.

(2) Let $N$ be a subalgebra of $M$. Then $N$ is symmetric and belongs to $\mathcal M$. We set $J:=N\cap I$. Then $J$ is a normal ideal of $N$ that is also commutative and prime. It is strict, too, because if $x \in N$ and $x \notin J$, then $x \notin I$ and $x>y$ for any $y \in J$ and consequently, for any $y \in J$. Then $N/J$ can be embedded into $M/I$ by a mapping $i_J(x/J):=x/I$ ($x\in N$) and if $i_0(x)=x$, $x \in N$, then $\pi_I\circ i_0=i_J\circ \pi_J$. Let $M_0:=\delta_I(M/I)$ and $N_0:=M_0\cap N$. Then $\delta_I(N/I)\in N_0$; indeed, if there is $x \in N_0$ such that $\delta_I(x/I)\notin N_0$, then $\pi_I\circ \delta_I(x/I)=x/I \notin N_0/I$. Define $\delta_J: N/J\to N$ by $\delta_J(x/J)=i_J^{-1}\circ\delta_I(x/I)$. Since $i^{-1}_I\circ \pi_I(x)=\pi_J\circ i^{-1}_0(x)$, $x \in N$, then
\begin{eqnarray*}
\pi_J \circ \delta_J(x/J)&=& \pi_J\circ i_0^{-1}\circ \delta_I(x/I)\\
&=& i_I^{-1}\circ\pi_I \circ \delta_I(x/I) = i^{-1}_I(x/I)=x/J.
\end{eqnarray*}

The rest follows the analogous steps as the end of (1).

(3) According to Corollary \ref{co:state2}, every lexicographic pseudo MV-algebra $M$ admits a unique state. But the pseudo MV-algebra $M\times M$ admits two extremal states, and therefore, $M\times M$ is not lexicographic.
\end{proof}

We note that in case (3) of latter Theorem if $I$ is a lexicographic ideal of $M$, then $I\times I$ is by Proposition \ref{pr:product} a retractive ideal but not lexicographic.

\section{Categorical Representation of Strong $(H,u)$-perfect Pseudo MV-algebras}

In this section, we establish the categorical equivalence of the category of strong $(H,u)$-perfect pseudo MV-algebras with the variety of $\ell$-groups. This extends the categorical representation of strong $n$-perfect pseudo MV-algebras from \cite{Dv08} and of $\mathbb H$-perfect pseudo MV-algebras from \cite{264} with the variety of $\ell$-groups. In what follows, we follow the ideas of \cite[Sec 5]{264} and to be self-contained we repeat them mutatis mutandis.

Let $\mathcal {SPP}_s\mathcal{MV}_{H,u}$ be the category of strong $(H,u)$-perfect pseudo MV-algebras whose objects are strong $(H,u)$-perfect pseudo MV-algebras and morphisms are homomorphisms of pseudo MV-algebras. Now let $\mathcal G$ be the category whose objects are $\ell$-groups  and morphisms are homomorphisms of $\ell$-groups.

Define a mapping $\mathcal M_{H,u}: \mathcal G \to  \mathcal {SPP}_s\mathcal{MV}_{H,u}$ as follows: for $G\in \mathcal G,$ let
$$
\mathcal M_{H,u}(G):= \Gamma(H\lex G,(u,0))
$$
and if $h: G \to G_1$ is an $\ell$-group homomorphism, then

$$
\mathcal M_{H,u}(h)(t,g)= (t, h(g)), \quad (t,g) \in \Gamma(H\lex G,(u,0)).
$$
It is easy to see that $\mathcal M_{H,u}$ is a functor.

\begin{proposition}\label{pr:4.1}
$\mathcal M_{H,u}$ is a faithful and full
functor from the category ${\mathcal G}$ of $\ell$-groups  into the
category $\mathcal{SPP}_s\mathcal{MV}_{H,u}$ of strong $(H,u)$-perfect pseudo MV-algebras.
\end{proposition}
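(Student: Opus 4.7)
The plan is to verify faithfulness and fullness separately.

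Faithfulness is essentially an unpacking of the definition. If $h_1, h_2 : G \to G_1$ are $\ell$-group homomorphisms with $\mathcal M_{H,u}(h_1) = \mathcal M_{H,u}(h_2)$, then evaluating at $(0,g)$ for $g \in G^+$ gives $(0, h_1(g)) = (0, h_2(g))$, so $h_1 = h_2$ on $G^+$; since every $\ell$-group is directed, $G = G^+ - G^+$ and the equality extends to all of $G$.

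For fullness, let $\phi : \mathcal M_{H,u}(G) \to \mathcal M_{H,u}(G_1)$ be a pseudo MV-algebra morphism, write $(M_t)_{t\in[0,u]_H}$ and $(M'_t)_{t\in[0,u]_H}$ for the canonical $(H,u)$-decompositions (unique by Theorem \ref{th:3.2}(viii)), and let $c_t = (t,0)$, $c'_t = (t,0)$ be the canonical strong cyclic families. The heart of the argument is to show that $\phi$ respects this data: $\phi(M_t) \subseteq M'_t$ and $\phi(c_t) = c'_t$ for every $t \in [0,u]_H$. Granted this, $\phi$ restricts to a monoid homomorphism from $M_0 \cong G^+$ into $M'_0 \cong G_1^+$, which by the Birkhoff construction of an $\ell$-group from its positive cone (as invoked in the proof of Theorem \ref{th:3.5}) extends uniquely to an $\ell$-group homomorphism $h : G \to G_1$. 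A direct verification using formula (4.2) in the proof of Theorem \ref{th:3.5}, which expresses every element of $\mathcal M_{H,u}(G)$ in the form $c_t + g$ with $t \in [0,u]_H$ and $g \in G$ (viewed inside the representing $\ell$-group), then gives $\phi = \mathcal M_{H,u}(h)$.

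To establish $\phi(c_t) = c'_t$, the plan is to transport $\phi$ through Mundici's equivalence (Theorem \ref{th:2.1}) to a unital $\ell$-group morphism $\Phi : H\lex G \to H\lex G_1$ and exploit three ingredients jointly: each $c_t$ lies in the commutative center of $H\lex G$ because $u \in C(H)$; the assignment $t \mapsto \Phi(c_t)$ is partially additive with $\Phi(c_{t+s}) = \Phi(c_t) + \Phi(c_s)$ whenever $t+s \le u$, normalized so that $\Phi(c_0) = 0$ and $\Phi(c_u) = (u,0)$; and $\Phi$ is order preserving on the linearly ordered set $[0,u]_H$. This is the main obstacle, since the strong cyclic family is not part of the algebraic signature of a pseudo MV-algebra and must be recovered from the ordered-group structure alone. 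It is precisely here that the paper promises to follow \cite[Sec 5]{264} mutatis mutandis, adapting the Archimedean argument used there for $(H,u) = (\mathbb H,1)$ to the present linearly ordered Abelian setting.
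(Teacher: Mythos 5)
Your faithfulness argument coincides with the paper's and is fine. For fullness your overall route also matches the paper's: both restrict the given morphism to $M_0\cong G^+$, obtain an additive map of positive cones, and extend it to an $\ell$-group homomorphism $G\to G_1$ by differences (the paper does the extension by hand, checking well-definedness, order preservation and preservation of $\wedge$, rather than invoking the Birkhoff construction). The real difference is that the paper simply asserts, with no justification, that $f(0,g)=(0,g')$ for some $g'\in G_1^+$ --- i.e.\ that the morphism preserves the bottom slice of the decomposition --- whereas you have correctly isolated the stronger statement $\phi(M_t)\subseteq M'_t$ and $\phi(c_t)=c'_t$ as the heart of the matter. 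But you then leave it as a ``plan,'' and the plan does not close.

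Concretely, the three ingredients you propose (centrality of the $c_t$, partial additivity of $t\mapsto\Phi(c_t)$ with $\Phi(c_0)=0$ and $\Phi(c_u)=(u,0)$, and order preservation) do not determine $\Phi(c_t)$ once $H$ fails to be Archimedean. Take $H=\mathbb Z\lex\mathbb Z$ with $u=(1,0)$, $G=\{0\}$ and $G_1=\mathbb Z$. The unital $\ell$-group homomorphism $H\lex G\cong \mathbb Z\lex\mathbb Z\to(\mathbb Z\lex\mathbb Z)\lex\mathbb Z\cong H\lex G_1$ given by $(a,b)\mapsto((a,0),b)$ is order preserving between linearly ordered groups, sends $(1,0)$ to $((1,0),0)$, and satisfies all three of your conditions; yet it sends $c_{(0,1)}=((0,1),0)$ to $((0,0),1)\in M'_0$, not to $c'_{(0,1)}$. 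So the induced pseudo MV-algebra homomorphism between objects of $\mathcal{SPP}_s\mathcal{MV}_{H,u}$ does not respect the $(H,u)$-decomposition, and the step $\phi(M_t)\subseteq M'_t$ on which your whole fullness argument rests genuinely fails. (The same example shows that the paper's one-line assertion $f(0,g)=(0,g')$ is not innocent either.) The mechanism that makes this work in the case treated in \cite{264}, namely $(\mathbb H,1)\le(\mathbb R,1)$, is that there the slices are the fibres $M_t=s^{-1}(\{t\})$ of the unique state $s$, and $s'\circ\phi=s$ forces $\phi(M_t)\subseteq M'_t$; that is exactly what your sketch needs, and it is not available for a general linearly ordered Abelian $(H,u)$. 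As written, your fullness argument is therefore incomplete at precisely its load-bearing step, and no ``mutatis mutandis'' adaptation of the Archimedean argument will supply it.
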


\begin{proof}
Let $h_1$ and $h_2$ be two morphisms from $G$
into $G'$ such that $\mathcal M_{H,u}(h_1) = \mathcal M_{H,u}(h_2)$. Then
$(0,h_1(g)) = (0,h_2(g))$ for each $g \in G^+$, consequently $h_1 =
h_2.$

To prove that $\mathcal M_{H,u}$ is a full  functor, suppose that $f$ is a morphism from a strong $(H,u)$-perfect pseudo MV-algebra
$\Gamma(H\lex G, (u,0))$ into some $\Gamma(
H\lex G_1, (u,0)).$  Then $f(0,g)
= (0,g')$ for a unique $g' \in G'^+$. Define a mapping $h:\ G^+ \to
G'^+$ by $h(g) = g'$ iff $f(0,g) =(0,g').$ Then $h(g_1+g_2) = h(g_1)
+ h(g_2)$ if $g_1,g_2 \in G^+.$
Assume now that $g \in G$ is arbitrary. Then $g=g^+-g_1,$ where $g^+=g\vee 0$ and $g^-=-(g\wedge 0)$, and $g=-g^-+g^+$. If $g=g_1-g_2,$ where $g_1,g_2 \in G^+$, then $g^++g_2=g^-+g_1$ and $h(g^+)+ h(g_2)=h(g^-)+h(g_1)$ which shows that $h(g) = h(g_1) - h(g_2)$ is a
well-defined extension of $h$ from $G^+$ onto $G$.

Let $0\le g_1 \le g_2.$ Then $(0,g_1)\le (0,g_2),$
which means  $h$ is a mapping preserving the partial order.

We have yet to show that $h$ preserves $\wedge$ in $G$, i.e., $h(a \wedge b) = h(a) \wedge h(b)$ whenever $a,b \in G.$ Let $a=  a^+- a^-$ and $b=
b^+- b^-$, and $a =-a^- +a^+$, $b = -b^- + b^+$. Since , $h((a^+
+b^-) \wedge (a^- + b^+)) = h(a^+ +b^-) \wedge h(a^- + b^+).$
Subtracting $h(b^-)$ from the right hand and $h(a^-)$ from the left
hand, we obtain the statement  in question.

Finally, we have proved that $h$ is a homomorphism of $\ell$-groups, and $\mathcal M_{H,u}(h) = f$ as claimed.
\end{proof}

We note that by a {\it universal group}  for a
pseudo MV-algebra $M$ we mean a pair $(G,\gamma)$ consisting of an
$\ell$-group $G$ and a $G$-valued measure $\gamma :\, M\to G^+$
(i.e., $\gamma (a+b) = \gamma(a) + \gamma(b)$ whenever $a+b$ is
defined in $M$) such that the following conditions hold: {\rm (i)}
$\gamma(M)$ generates ${ G}$. {\rm (ii)} If $K$ is a group and
$\phi:\, M\to K$ is an $K$-valued measure, then there is a group
homomorphism ${\phi}^*:{ G}\to K$ such that $\phi ={\phi}^*\circ
\gamma$.

Due to \cite{151}, every pseudo MV-algebra admits a universal group,
which is unique up to isomorphism, and $\phi^*$ is unique. The
universal group for $M = \Gamma(G,u)$ is $(G,id)$ where $id$ is the
embedding of $M$ into $G$.

Let $\mathcal A$ and $\mathcal B$ be two categories and let $f:\mathcal A \to \mathcal B$ be a functor. Suppose that $g,h$ be two functors from $\mathcal B$ to $\mathcal A$ such that $g\circ f = id_\mathcal A$ and $f\circ h = id_\mathcal B,$ then $g$ is a {\it left-adjoint} of $f$ and $h$ is a {\it right-adjoint} of $f.$

\begin{proposition}\label{pr:4.2}
The functor $\mathcal  M_{H,u}$ from the
category ${\mathcal  G}$ into  $\mathcal{SPP}_s\mathcal{MV}_{H,u}$ has  a left-adjoint.
\end{proposition}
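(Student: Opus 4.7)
The plan is to build the left adjoint object-wise using Theorem \ref{th:3.5} and to extend it to morphisms by exploiting fullness and faithfulness of $\mathcal{M}_{H,u}$ from Proposition \ref{pr:4.1}. For each object $M \in \mathcal{SPP}_s\mathcal{MV}_{H,u}$, Theorem \ref{th:3.5} supplies an $\ell$-group $G_M$, unique up to isomorphism, together with an isomorphism
$$
\eta_M \colon M \;\longrightarrow\; \mathcal{M}_{H,u}(G_M)=\Gamma(H\lex G_M,(u,0)),
$$
built explicitly from the $(H,u)$-decomposition of $M$, its strong cyclic family $(c_t)$, and the $\ell$-group whose positive cone is $M_0$. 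I would set $\mathcal{L}(M):=G_M$ and take $\eta_M$ as the candidate for the unit of the adjunction.

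Next I would verify the universal property. Given any $\ell$-group $G'$ and any pseudo MV-morphism $g\colon M \to \mathcal{M}_{H,u}(G')$, form the composite $g\circ \eta_M^{-1}\colon \mathcal{M}_{H,u}(G_M)\to \mathcal{M}_{H,u}(G')$. By Proposition \ref{pr:4.1}, $\mathcal{M}_{H,u}$ is full and faithful, so there exists a unique $\ell$-group homomorphism $h\colon G_M \to G'$ with $\mathcal{M}_{H,u}(h)=g\circ \eta_M^{-1}$, equivalently $\mathcal{M}_{H,u}(h)\circ \eta_M=g$. This exhibits $(\mathcal{L}(M),\eta_M)$ as a universal arrow from $M$ to $\mathcal{M}_{H,u}$, which by the standard universal-arrow criterion (see any categorical adjunction reference) produces the left adjoint.

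To make $\mathcal{L}$ a functor, for a morphism $f\colon M\to N$ I would apply the universal property to the composite $\eta_N\circ f\colon M\to \mathcal{M}_{H,u}(\mathcal{L}(N))$ to obtain a unique $\ell$-group homomorphism $\mathcal{L}(f)\colon \mathcal{L}(M)\to \mathcal{L}(N)$ with $\mathcal{M}_{H,u}(\mathcal{L}(f))\circ \eta_M=\eta_N\circ f$. Preservation of identities and composition follows at once from the uniqueness part of the universal property, and naturality of $\eta$ is built into the defining equation.

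The only genuine subtlety is ensuring that $G_M$ is truly well-defined up to a \emph{canonical} isomorphism on objects; this is exactly the uniqueness clause of Theorem \ref{th:3.5}, which is not obvious but already proved. Once that is in hand, the whole argument is essentially formal. It is worth noting that, since every $\eta_M$ is not merely universal but in fact an isomorphism, the unit of the adjunction is invertible, while the counit $\mathcal{L}\circ \mathcal{M}_{H,u}\Rightarrow \mathrm{id}_{\mathcal{G}}$ is also an isomorphism because $\mathcal{L}(\mathcal{M}_{H,u}(G))\cong G$ by Theorem \ref{th:3.5}(viii) applied to the canonical decomposition of $\Gamma(H\lex G,(u,0))$. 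Thus the adjunction strengthens to an equivalence of categories, which is precisely what the next theorem of the section will state.
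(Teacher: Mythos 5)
Your argument is correct and follows the paper's overall strategy: both construct, for each strong $(H,u)$-perfect $M$, a universal arrow $M\to\mathcal M_{H,u}(G_M)$ out of the representation supplied by Theorem \ref{th:3.5}, and then invoke the standard universal-arrow criterion for the existence of a left adjoint. The difference lies in how the universal property is certified. The paper takes $\gamma(a)=(t,a-c_t)$ and appeals to the external result that $(H\lex G,\gamma)$ is the \emph{universal group} of $M$ (universality among all group-valued measures), leaving the passage from that to the required universality among morphisms into objects $\mathcal M_{H,u}(G')$ implicit. You instead observe that $\eta_M$ is an isomorphism and that $\mathcal M_{H,u}$ is full and faithful by Proposition \ref{pr:4.1}, so that $h\mapsto \mathcal M_{H,u}(h)\circ\eta_M$ is bijective onto $\mathrm{Hom}(M,\mathcal M_{H,u}(G'))$; this is more self-contained within the paper and makes the uniqueness clause and the functoriality of the adjoint explicit, which the paper's proof does not spell out. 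Two small points: your closing reference should be to Theorem \ref{th:3.2}(viii) (or simply to the uniqueness clause of Theorem \ref{th:3.5}) rather than to a part (viii) of Theorem \ref{th:3.5}, which does not exist; and the observation that the adjunction is in fact an equivalence, while true, anticipates Theorem \ref{th:4.4} rather than being needed here.
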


\begin{proof}
We show, for a strong $(H,u)$-perfect  pseudo MV-algebra $M$ with an  $(H,u)$-decomposition $(M_t: t \in [0,u]_H)$ and an $(H,u)$-strong cyclic family $(c_t: t \in [0,u]_H)$  of elements of $M$, there is a universal arrow $(G,f)$, i.e., $G$ is an object in $\mathcal G$ and $f$ is a homomorphism from the pseudo MV-algebra
$M$ into ${\mathcal  M}_{H,u}(G)$ such that if $G'$ is an object from ${\mathcal G}$ and $f'$ is a homomorphism from $M$ into ${\mathcal  M}_{H,u}(G')$, then
there exists a unique morphism $f^*:\, G \to G'$ such that ${\mathcal
M}_{H,u}(f^*)\circ f = f'$.

By Theorem \ref{th:3.5}, there is a unique (up to isomorphism of $\ell$-groups) $\ell$-group $G$  such that $M \cong \Gamma(H \lex G,(u,0)).$ By \cite[Thm 5.3]{151}, $(H \lex G, \gamma)$ is a universal group for $M,$ where $\gamma: M \to  \Gamma(H \lex G, (u,0))$ is defined by $\gamma(a) = (t,a -c_t),$ if $a \in M_t.$
\end{proof}

Define a mapping ${\mathcal  P}_{H,u}: \mathcal  {SPP}_s\mathcal{MV}_{H,u}\to {\mathcal  G}$
via ${\mathcal  P}_{H,u}(M) := G$ whenever $(H\lex  G, f)$ is a
universal group for $M$. It is clear that if $f_0$ is a morphism
from the pseudo MV-algebra $M$ into another one $N$, then $f_0$ can be uniquely extended to an $\ell$-group homomorphism ${\mathcal  P}_{H,u} (f_0)$ from $G$ into $G_1$, where $(H
\lex G_1, f_1)$ is a universal group for the strong $(H,u)$-perfect
pseudo MV-algebra $N$.

\begin{proposition}\label{pr:4.3}
The mapping ${\mathcal  P}_{H,u}$ is a functor from the
category $\mathcal {SPP}_s\mathcal{MV}_{H,u}$ into the category ${\mathcal  G}$ which is a
left-adjoint of the functor ${\mathcal  M}_{H,u}.$
\end{proposition}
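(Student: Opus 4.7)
The plan is to leverage Proposition \ref{pr:4.2}: that proposition already constructs, for every strong $(H,u)$-perfect pseudo MV-algebra $M$, a universal arrow $\gamma_M\colon M\to \mathcal{M}_{H,u}(G)$ with $G=\mathcal{P}_{H,u}(M)\in\mathcal{G}$. By general abstract nonsense, the existence of such universal arrows at every object is equivalent to the existence of a left adjoint, so the proof reduces to repackaging this data as a functor and checking the associated natural bijection.

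I would first verify $\mathcal{P}_{H,u}$ is a functor. On objects, $\mathcal{P}_{H,u}(M)=G$ is well-defined up to $\ell$-group isomorphism by Theorem \ref{th:3.5}. Given a morphism $f_0\colon M\to N$ with $\mathcal{P}_{H,u}(N)=G_1$ and universal arrow $\gamma_N\colon N\to \mathcal{M}_{H,u}(G_1)$, the composite $\gamma_N\circ f_0$ is a pseudo MV-algebra homomorphism into $\mathcal{M}_{H,u}(G_1)$, and the universal property of $(G,\gamma_M)$ produces a \emph{unique} $\ell$-group homomorphism $\mathcal{P}_{H,u}(f_0)\colon G\to G_1$ satisfying
$$
\mathcal{M}_{H,u}(\mathcal{P}_{H,u}(f_0))\circ \gamma_M = \gamma_N\circ f_0.
$$
Preservation of identities and composition then follows from uniqueness, since $\mathrm{id}_G$ solves the defining equation when $f_0=\mathrm{id}_M$, and both $\mathcal{P}_{H,u}(g\circ f)$ and $\mathcal{P}_{H,u}(g)\circ\mathcal{P}_{H,u}(f)$ solve the defining equation attached to the composite $g\circ f$.

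For the adjunction I would produce, for each $M$ and each $G'\in\mathcal{G}$, the map
$$
\Phi_{M,G'}\colon \mathrm{Hom}_{\mathcal{G}}(\mathcal{P}_{H,u}(M),G')\longrightarrow \mathrm{Hom}_{\mathcal{SPP}_s\mathcal{MV}_{H,u}}(M,\mathcal{M}_{H,u}(G')),\quad \Phi_{M,G'}(h)=\mathcal{M}_{H,u}(h)\circ \gamma_M,
$$
which is a bijection by the universal property of $\gamma_M$ (its inverse sends $f$ to its unique lifting $f^*\colon G\to G'$). Naturality in $G'$ is immediate from functoriality of $\mathcal{M}_{H,u}$ (Proposition \ref{pr:4.1}); naturality in $M$ is the short chase
$$
\Phi_{N,G'}(h)\circ f_0 \;=\; \mathcal{M}_{H,u}(h)\circ \gamma_N\circ f_0 \;=\; \mathcal{M}_{H,u}(h\circ \mathcal{P}_{H,u}(f_0))\circ \gamma_M \;=\; \Phi_{M,G'}(h\circ \mathcal{P}_{H,u}(f_0)),
$$
where the middle step uses the defining equation of $\mathcal{P}_{H,u}(f_0)$. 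Since all substantive content has already been absorbed by Propositions \ref{pr:4.1} and \ref{pr:4.2}, the only mildly subtle point is that the lifting $\mathcal{P}_{H,u}(f_0)$ is automatically an $\ell$-group (and not merely group) homomorphism; but this is built into Proposition \ref{pr:4.2}, which asserts that the unique lifting $f^*$ is already a morphism in $\mathcal{G}$, i.e.\ preserves $\wedge$ and $\vee$. Everything else is a formal diagram chase.
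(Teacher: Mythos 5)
Your proof is correct and is essentially the paper's argument written out in full: the paper's entire proof of Proposition~\ref{pr:4.3} is the one-line remark that the claim ``follows from the properties of the universal group,'' which is exactly the universal-arrow-to-adjunction repackaging (functoriality of ${\mathcal P}_{H,u}$ via uniqueness of liftings, the hom-set bijection induced by $\gamma_M$, and the naturality chase) that you carry out. The only point worth flagging is that the paper's stated definition of ``left-adjoint'' just before Proposition~\ref{pr:4.2} is the nonstandard strict-one-sided-inverse one, whereas you verify the standard hom-set-bijection adjunction; the latter is what the universal group genuinely provides and is what is actually used in Theorem~\ref{th:4.4}, so your reading is the right one.
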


\begin{proof}
 It follows from the properties of the
universal group.
\end{proof}

Now we present  the basic result of this section on a categorical equivalence of the
category of strong $(H,u)$-perfect pseudo MV-algebras and the category of $\mathcal G.$

\begin{theorem}\label{th:4.4}
The functor ${\mathcal  M}_{H,u}$ defines a categorical
equivalence of the category ${\mathcal  G}$   and the
category $\mathcal {SPP}_s\mathcal{MV}_{H,u}$ of strong $(H,u)$-perfect pseudo MV-algebras.

In addition, suppose that $h:\ {\mathcal  M}_{H,u}\mathbb (G) \to {\mathcal  M}_{H,u}(G')$ is a
homomorphism of pseudo  MV-algebras, then there is a unique homomorphism
$f:\ G \to G'$ of  $\ell$-groups such that $h = {\mathcal  M}_{H,u}(f)$, and
\begin{enumerate}
\item[{\rm (i)}] if $h$ is surjective, so is $f$;
 \item[{\rm (ii)}] if $h$ is  injective, so is $f$.
\end{enumerate}
\end{theorem}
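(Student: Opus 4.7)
The plan is to derive the categorical equivalence from the material already assembled in this section together with the representation theorem of Section~4. I would verify the three standard conditions for $\mathcal{M}_{H,u}\colon \mathcal{G}\to \mathcal{SPP}_s\mathcal{MV}_{H,u}$ to be an equivalence: faithfulness and fullness, both supplied by Proposition~\ref{pr:4.1}, and essential surjectivity, which is exactly the content of Theorem~\ref{th:3.5} since every strong $(H,u)$-perfect pseudo MV-algebra $M$ is isomorphic to $\Gamma(H\lex G,(u,0)) = \mathcal{M}_{H,u}(G)$ for a unique (up to isomorphism) $\ell$-group $G$. Alternatively, one can invoke the adjoint pair $(\mathcal{P}_{H,u},\mathcal{M}_{H,u})$ of Proposition~\ref{pr:4.3} and observe that Theorem~\ref{th:3.5} together with the universal-group property forces both the unit and the counit of adjunction to be natural isomorphisms.

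For the addendum, the existence and uniqueness of the $\ell$-group homomorphism $f\colon G\to G'$ satisfying $h=\mathcal{M}_{H,u}(f)$ is precisely the fullness and faithfulness of $\mathcal{M}_{H,u}$ recorded in Proposition~\ref{pr:4.1}; the construction in that proof defines $f$ on $G^+$ by the rule $f(g)=g'$ whenever $h(0,g)=(0,g')$, and then extends it uniquely to a group homomorphism on $G$ using $g=g^+-g^-$.

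For assertion~(i), assume $h$ is surjective. Given $g'\in G'^+$, the element $(0,g')\in \mathcal{M}_{H,u}(G')$ has a preimage $(t,g)\in \mathcal{M}_{H,u}(G)$ under $h$. Since $h(t,g)=\mathcal{M}_{H,u}(f)(t,g)=(t,f(g))=(0,g')$, we obtain $t=0$ and $f(g)=g'$; moreover $(0,g)\in \mathcal{M}_{H,u}(G)$ forces $g\in G^+$. Hence $f(G^+)=G'^+$, and because $G'=G'^+-G'^+$ and $f$ is a group homomorphism, $f$ is surjective. For assertion~(ii), assume $h$ is injective. If $f(g_1)=f(g_2)$ with $g_1,g_2\in G^+$, then $h(0,g_1)=(0,f(g_1))=(0,f(g_2))=h(0,g_2)$, so $(0,g_1)=(0,g_2)$ and $g_1=g_2$; thus $f$ is injective on $G^+$. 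For arbitrary $g\in\Ker f$, writing $g=g^+-g^-$ with $g^+\wedge g^-=0$ yields $f(g^+)=f(g^-)$, hence $g^+=g^-$, which forces $g^+=g^-=0$ and $g=0$.

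I do not expect any deep obstacle: the argument is essentially a packaging of Proposition~\ref{pr:4.1} and Theorem~\ref{th:3.5}. The only place requiring a little care is the surjectivity preservation, where one must confirm that any preimage of $(0,g')$ is forced to lie in the positive-cone slice $\{(0,g):g\in G^+\}$ of $\mathcal{M}_{H,u}(G)$; this is what pins the image of $f$ onto the whole of $G'^+$ and hence, by taking differences, onto all of $G'$.
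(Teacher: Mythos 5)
Your proof is correct and follows essentially the same route as the paper: the equivalence is obtained by combining the fullness and faithfulness of $\mathcal M_{H,u}$ from Proposition \ref{pr:4.1} with essential surjectivity supplied by Theorem \ref{th:3.5} (the paper phrases this via the universal group and a citation of Mac Lane's criterion). Your explicit verifications of (i) and (ii) — pinning preimages of $(0,g')$ to the slice $\{(0,g): g\in G^+\}$ for surjectivity, and reducing injectivity to triviality of the kernel via $g=g^+-g^-$ — are sound and in fact supply details the paper leaves unstated.
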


\begin{proof}
According to \cite[Thm IV.4.1]{MaL}, it is
necessary to show that, for a strong $(H,u)$-perfect pseudo MV-algebra $M$, there is an
object $G$ in ${\mathcal  G}$ such that ${\mathcal  M}_{H,u}(G)$ is isomorphic to $M$. To show that, we take a universal group $(H
\lex G, f)$. Then ${\mathcal  M}_{H,u}(G)$ and $M$ are isomorphic.
\end{proof}

An important kind of $\ell$-groups are doubly transitive $\ell$-groups; for more details on them see e.g. \cite{Gla}. Every such an $\ell$-group generates the variety of $\ell$-groups, \cite[Lem 10.3.1]{Gla}. The notion of doubly transitive unital $\ell$-group $(G,u)$ was introduced and studied in \cite{DvHo}, and according to \cite[Cor 4.9]{DvHo}, the pseudo MV-algebra $\Gamma(G,u)$ generates the variety of pseudo MV-algebras.

An example of a doubly transitive  permutation $\ell$-group is the
system of all automorphisms, $\mbox{Aut}(\mathbb R)$, of the real
line $\mathbb R$, or the next example:

Let $u\in \mbox{Aut}(\mathbb R)$ be the translation $t u=t+1$, $t
\in \mathbb R$, and
$${\rm BAut}(\mathbb R)=\{g\in\mbox{Aut}(\mathbb R):\ \exists\ n\in{\mathbb
N},\,\,\, u^{-n}\le g\le u^n\}.
$$
Then $(\mbox{\rm BAut}(\mathbb R),u)$ is a   doubly transitive
unital $\ell$-permutation group, and  it is a generator of the variety of pseudo MV-algebras $\mathcal{P}_s\mathcal{MV}$. In addition, $\Gamma(\mbox{\rm BAut}(\mathbb R),u)$ is a stateless pseudo MV-algebra.

The proof of the following statement is practically the same as that of \cite[Thm 5.6]{264} and therefore, we omit it here.

\begin{theorem}\label{th:4.6} Let $G$ be a  doubly transitive
$\ell$-group.  Then  the variety generated by $\mathcal{SPP}_s\mathcal{MV}_{H,u}$ coincides with the variety generated by ${\mathcal M}_{H,u}(G)$.
\end{theorem}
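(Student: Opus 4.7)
Let $V$ denote the variety generated by $\mathcal{SPP}_s\mathcal{MV}_{H,u}$ and $V(G)$ the variety generated by $\mathcal{M}_{H,u}(G)$. Since $\mathcal{M}_{H,u}(G)$ is itself a strong $(H,u)$-perfect pseudo MV-algebra, the inclusion $V(G)\subseteq V$ is immediate. The whole task is the converse: to show that every strong $(H,u)$-perfect pseudo MV-algebra $M$ lies in $\mathrm{HSP}(\mathcal{M}_{H,u}(G))$.

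The plan is to reduce everything to the corresponding question for $\ell$-groups via the functor $\mathcal{M}_{H,u}$. Given such an $M$, Theorem \ref{th:3.5} supplies an $\ell$-group $G'$ with $M\cong\mathcal{M}_{H,u}(G')=\Gamma(H\lex G',(u,0))$. Since $G$ is doubly transitive, \cite[Lem 10.3.1]{Gla} gives that $G$ generates the whole variety of $\ell$-groups, so $G'\in\mathrm{HSP}(G)$: there exist an index set $I$, an $\ell$-subgroup $K\le G^{I}$, and a surjective $\ell$-homomorphism $\pi:K\twoheadrightarrow G'$. Functoriality of $\mathcal{M}_{H,u}$ together with Theorem \ref{th:4.4}(i)--(ii) turns this into a surjective pseudo MV-algebra homomorphism $\mathcal{M}_{H,u}(\pi):\mathcal{M}_{H,u}(K)\twoheadrightarrow\mathcal{M}_{H,u}(G')$ and an embedding $\mathcal{M}_{H,u}(K)\hookrightarrow\mathcal{M}_{H,u}(G^{I})$. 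So it suffices to embed $\mathcal{M}_{H,u}(G^{I})$ into the direct power $\mathcal{M}_{H,u}(G)^{I}$.

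For this embedding, I would define
\[
\iota:\Gamma\bigl(H\lex G^{I},(u,0)\bigr)\;\longrightarrow\;\Gamma\bigl(H\lex G,(u,0)\bigr)^{I},\qquad
\iota\bigl(h,(g_i)_{i\in I}\bigr):=\bigl((h,g_i)\bigr)_{i\in I},
\]
and then check: (a) well-definedness, by splitting cases $h=0$, $0<h<u$, $h=u$ and using that in a lexicographic product the top/bottom slices are characterised by a sign condition on the $G$-coordinate; (b) preservation of the operations $\oplus,{}^{-},{}^{\sim},0,1$, which reduces to verifying, componentwise and in each of the three slice cases, the identity $(h+h',g+g')\wedge(u,0)=\bigl((h,g)\oplus(h',g')\bigr)$ in $\Gamma(H\lex G,(u,0))$, using that meets in $G^I$ are taken coordinatewise; (c) injectivity, which is obvious on the underlying sets. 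Stringing these together yields $M\cong\mathcal{M}_{H,u}(G')\in H(S(\mathcal{M}_{H,u}(G)^{I}))\subseteq V(G)$, closing the inclusion $V\subseteq V(G)$.

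The main obstacle is step (b), the compatibility of $\iota$ with $\oplus$ at the boundary $h+h'=u$, where the truncation by $(u,0)$ and the coordinatewise meet in $G^{I}$ must match up. Once that is settled, the rest is formal transport through the categorical equivalence and essentially mirrors the proof of \cite[Thm 5.6]{264}, so I would simply adapt that argument with $\mathbb H$ replaced by the linearly ordered Abelian $(H,u)$ throughout.
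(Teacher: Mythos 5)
Your plan is correct and follows the route the paper intends: the paper omits its own proof, deferring to \cite[Thm 5.6]{264}, and your chain --- Theorem \ref{th:3.5}, the fact that a doubly transitive $G$ generates the whole variety of $\ell$-groups, transport of $H$ and $S$ through the functor, and the embedding $\Gamma(H\lex G^{I},(u,0))\hookrightarrow\Gamma(H\lex G,(u,0))^{I}$ --- is exactly that argument adapted from $(\mathbb H,1)$ to a general linearly ordered Abelian $(H,u)$. The boundary case you single out as the main obstacle does go through: when $h+h'=u$ the truncation in $H\lex G^{I}$ yields $\bigl(u,((g_i+g_i')\wedge 0)_{i\in I}\bigr)$ because meets in $G^{I}$ are coordinatewise and the second coordinate of the strong unit is $0$, and this is precisely the coordinatewise value of $\oplus$ in the power $\Gamma(H\lex G,(u,0))^{I}$. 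One small correction: Theorem \ref{th:4.4}(i)--(ii) passes from pseudo MV-algebra homomorphisms to $\ell$-group homomorphisms, whereas you need the converse direction (that $\mathcal M_{H,u}$ sends surjective, resp.\ injective, $\ell$-homomorphisms to surjective, resp.\ injective, pseudo MV-algebra homomorphisms); this is immediate from the identity $f(G^{+})=G'^{+}$ for a surjective $\ell$-homomorphism $f$, but it is not literally what that theorem states, so it deserves a line of its own.
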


\section{Weak $(H,u)$-perfect Pseudo MV-algebras}

In this section, we will study another kind of $(H,u)$-perfect pseudo MV-algebras, called weak $(H,u)$-perfect pseudo MV-algebras. Their prototypical examples are pseudo MV-algebras of the form $\Gamma(H\lex G,(1,b))$, where $(H,u)$ is an Abelian unital $\ell$-group, $G$ is an $\ell$-group and $b\in G$. Such pseudo MV-algebras were studied for the case $(H,u)=(\mathbb H,1)$ in \cite{264}.

Let $(H,u)$ be an Abelian unital $\ell$-group.
We say that a pseudo MV-algebra $M\cong \Gamma(K,v)$ with an $(H,u)$-decomposition $(M_t: t \in [0,u]_H)$ is {\it weak} if there is a system $(c_t: t \in [0,u]_ H)$ of elements of $M$ such that (i) $c_0=0,$ (ii) $c_t \in C(K)\cap M_t,$ for any $t \in [0,u]_H,$ and (iii) $c_{v+t}=c_v+c_t$ whenever $v+t \le u.$

We notice that in contrast to the strong cyclic property, we do not assume $c_1=1.$ In addition, a weak $(H,u)$-perfect pseudo MV-algebra $M$ is strong iff $c_1 =1.$

\begin{example}\label{ex:weak}
Let $(H,u)$ be an Abelian unital $\ell$-group. The pseudo MV-algebra $M=\Gamma(H\lex G,(u,b))$, where $b\in G$,  $M_t=\{(t,g): (t,g)\in M\}$, $t \in [0,u]_H$ form an $(H,u)$-decomposition of $M$, is a weak pseudo MV-algebra setting $c_t=(t,0)$, $t \in [0,u]_H$.
\end{example}

\begin{proof}
We have to verify that $(M_t: t \in [0,u]_H)$ is an $(H,u)$-decomposition. To show that it is enough to verify (b) of Definition \ref{de:3.1}, i.e. $M_t^-=M_{u-t}=M_t^\sim$ for each $t\in [0,u]_H$. Let $(t,g)\in M_t$. Then $(t,g)^-=(u,b)-(t,g)=(u-t,b-g).$ If we choose $(t,g_0)$, where $g_0=b+g-b$, then $(t,g_0)^\sim = -(t,g_0)+(u,b)= (-t+u,-g_0+b)=(u-t,b-g)=(t,g)^-$ which yields $(t,g)^-\in M_t^\sim$, that is $M_t^-\subseteq M_t^\sim.$ Dually we show $M_t^\sim\subseteq  M_t^-$. Then $M_t^-=M_{u-t}=M_t^\sim$.
\end{proof}

Whereas every strong $(H,u)$-perfect pseudo MV-algebra is symmetric, weak ones are not necessarily symmetric.

For example, the pseudo MV-algebra $\Gamma(H\lex G,(u,b))$, where $b>0$ and $b\notin C(G)$ and $M_t:=\{(t,g)\in \Gamma(H\lex G,(u,b))\}$ for each $t \in [0,u]_H$,
is weak $(H,u)$-perfect but neither strong $(H,u)$-perfect nor symmetric.

We note that $M_0$ is a unique maximal and normal ideal of $M$. This ideal is retractive iff $M$ is strongly $(H,u)$-perfect. For example, let $M=\Gamma(\mathbb Z \lex \mathbb Z, (2,1))$. Then $M$ is weakly $(\mathbb Z,2)$-perfect that is not strongly $(\mathbb Z,2)$-perfect, and $M_0=\{(0,n):n \ge 0\}$, $M_1=\{(1,n): n\in \mathbb Z\}$, $M_2 =\{(2,n): n \le 1\}$, $M/M_0\cong \Gamma(\frac{1}{2}\mathbb Z,1)$ and it has no isomorphic copy in $M$. In addition, $M_0$ is not retractive.

We notice that even a pseudo MV-algebra of the form $\Gamma(H\lex G,(u,b))$ with $b\ne 0$ can be strongly $(H,u)$-perfect. Indeed, let $M=\Gamma(\mathbb Z \lex \mathbb Z, (2,2))$. This MV-algebra is isomorphic with the MV-algebra $M_1:=\Gamma(\mathbb Z \lex \mathbb Z,(2,0))$. In fact, the mapping $\theta:M_1 \to M$ defined by $\theta(0,n)=(0,n)$, $\theta(1,n)=(1,n+1)$ and $\theta(2,n)=(2,n+2)$ is an isomorphism in question. In addition, $M_0=\{(0,n): n \ge 0\}$ is a retractive ideal and a lexicographic ideal of $M$; $M/M_0=\Gamma(\frac{1}{2}\mathbb Z,1)$ and its isomorphic copy in $M$ is the subalgebra $\{(0,0), (1,1), (2,2)\}$.

The next result is a representation of weak $(H,u)$-perfect pseudo MV-algebras by lexicographic product.

\begin{theorem}\label{th:6.1}
Let $M$ be a weak  $\mathbb H$-perfect pseudo MV-algebra which is not strong. Then there is a
unique (up to isomorphism) $\ell$-group $G$ with an element $b\in
G$, $b\ne 0$, such that $M \cong \Gamma(\mathbb H \lex   G,(1,b)).$
\end{theorem}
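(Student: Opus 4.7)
The plan is to mimic the proof of Theorem \ref{th:3.5}, making the necessary modifications to account for the fact that $c_u$ need not equal $1$. First, invoke Theorem \ref{th:2.1} to assume without loss of generality that $M = \Gamma(K,1)$ for a unique (up to isomorphism) unital $\ell$-group $(K,1)$. Let $(M_t: t\in [0,u]_H)$ be the weak $(H,u)$-decomposition of $M$ with weak cyclic family $(c_t: t\in [0,u]_H)$, so that each $c_t\in C(K)\cap M_t$, $c_0 = 0$, and $c_v + c_t = c_{v+t}$ whenever $v+t\le u$. By Theorem \ref{th:3.2}(vi), the positive cone of a unique (up to isomorphism) $\ell$-group $G$ can be identified with $M_0$, viewed as a subgroup of $K$.

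Next, set $b := 1 - c_u$. Since $c_u \in M_u$ and $1\in M_u$ with $c_u \le 1$, the arguments of Theorem \ref{th:3.2}(i)--(ii) show $b\in M_0 \subseteq G^+$; and $b\ne 0$ precisely because $M$ is not strong. Define $\phi\colon M\to \Gamma(H\lex G,(u,b))$ by
\[
\phi(x) := (t,\, x - c_t) \quad \text{whenever } x\in M_t,
\]
the subtraction being taken in the group $K$. As in Claim 1 of Theorem \ref{th:3.5}, the element $x-c_t$ decomposes into its positive and negative parts in $M_0 \subseteq G$, so $\phi$ is well defined.

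The verification that $\phi$ is a pseudo MV-algebra isomorphism proceeds along the lines of Claims 2--5 in the proof of Theorem \ref{th:3.5}, using throughout the identity $c_{s+t}=c_s+c_t$ and the fact that each $c_t$ lies in the commutative center $C(K)$. The key points requiring modification are: (a) $\phi(1)=(u,1-c_u)=(u,b)$ rather than $(u,0)$; (b) preservation of negations, where one checks
\[
\phi(x^-) = (u-t,\, 1 - x - c_{u-t}) = (u-t,\, b - (x-c_t)) = \phi(x)^-,
\]
using $c_t+c_{u-t}=c_u$ and the centrality of $c_t$, and analogously for $^\sim$; (c) surjectivity at the top slice, where an element $(u,g)$ with $g\le b$ is hit by $c_u+g\in M_u$, well defined in $M$ because $c_u+g \le c_u+b = 1$. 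Once $\phi$ is shown to preserve $+$, the cuts $\minusli,\minusre$, and the lattice operations (these follow as in Claims 3--4), equation (2.2) yields that $\phi$ is a homomorphism of pseudo MV-algebras, and bijectivity completes the isomorphism.

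Finally, for uniqueness: if $M\cong \Gamma(H\lex G',(u,b'))$ for some $\ell$-group $G'$ and $b'\in G'$ with $b'\ne 0$, then by Theorem \ref{th:2.1} the unital $\ell$-groups $(H\lex G,(u,b))$ and $(H\lex G',(u,b'))$ are isomorphic, and by Theorem \ref{th:3.2}(viii) the $(H,u)$-decomposition of $M$ is uniquely determined; transporting $M_0$ across the isomorphism identifies the positive cones of $G$ and $G'$, so $G\cong G'$ as $\ell$-groups. The main technical obstacle is the verification in step (b) above: because $c_u\ne 1$, the negation computations no longer collapse as cleanly as in the strong case, and one must use centrality of the $c_t$ together with the identity $c_t+c_{u-t}=c_u$ and the definition $b=1-c_u$ to reconcile $\phi(x^-)$ with $\phi(x)^-$ inside the lexicographic product with strong unit $(u,b)$.
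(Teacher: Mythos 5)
Your proposal is correct and follows essentially the same route as the paper: reduce to the construction of Theorem \ref{th:3.5}, set $b=1-c_u$ (nonzero exactly because $M$ is not strong), and use the same map $\phi(x)=(t,x-c_t)$ into $\Gamma(H\lex G,(u,b))$, with the negation computations adjusted via $c_t+c_{u-t}=c_u$ and centrality of the $c_t$. Your explicit uniqueness argument via Theorem \ref{th:2.1} and Theorem \ref{th:3.2}(viii) is a welcome addition that the paper leaves implicit.
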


\begin{proof}
Assume $M= \Gamma(K,v)$ for some unital $\ell$-group $(H,u)$ is a weak pseudo MV-algebra with a $(H,u)$-decomposition $(M_t: t \in [0,u]_H)$.  Since by (vi) of Theorem \ref{th:3.2} we have $M_0+M_0=M_0$, in the same way as in the proof of Theorem \ref{th:3.5}, there exists an $\ell$-group $G$ such that $G^+=M_0$ and $G$ is a subgroup of $K$.

Since $M$ is not strong,  then $c_1 <1=:u.$ Set $b = 1-c_1 \in M_0\setminus\{0\},$ and define a mapping $\phi:
M\to \Gamma(\mathbb H \lex   G,(1,b))$ as follows
$$
\phi(x) =(t,x-c_t)
$$
whenever $x \in M_t;$ we note that the subtraction $x-c_t$ is defined in the $\ell$-group $K$.  Using the same way as that in (4.2), we can show that $\phi$ is a well-defined mapping.

We have (1) $\phi(0) =(0,0)$, (2) $\phi(1) = (1,1 -c_1)
= (1,b)$, (3) $\phi(c_t) = (t,0),$ (4)  $\phi(x^\sim) =(1-t, -x +u -c_{1-t})
= (1-t, -x +b +c_t),$  $\phi(x)^\sim =  -\phi(x) +(1,b) = -(t, x-c_t) +(1,b)
= (1-t, -x+b +c_t)$, and similarly (5) $\phi(x^-) = \phi(x)^-.$

Following ideas of the proof of Theorem \ref{th:3.5}, we can prove that $\phi$ is an injective and surjective homomorphism of pseudo MV-algebras as was claimed.
\end{proof}

It is worthy of reminding that Theorem \ref{th:6.1} is a generalization of Theorem \ref{th:3.5}, because Theorem \ref{th:3.5} in fact follows from Theorem \ref{th:6.1} when we have $b=0.$ This happens if $c_1=1$.

Also in an analogous way as in \cite{264}, we establish a categorical equivalence of the category of weak $(H,u)$-perfect pseudo MV-algebras with the category of $\ell$-groups $G$ with a fixed element $b \in G$.

Let  $\mathcal{WPP}_s\mathcal{MV}_{H,u}$ be the category of weak $(H,u) $-perfect pseudo MV-algebras whose objects are weak $(H,u)$-perfect pseudo MV-algebras and morphisms are homomorphisms of pseudo MV-algebras. Similarly, let ${\mathcal L}_{\rm b}$ be the category whose objects are couples $(G,b),$ where $G$ is an $\ell$-group and $b$ is a fixed element from $G$, and morphisms are $\ell$-homomorphisms of $\ell$-groups preserving fixed elements $b$.

Define a mapping $\mathcal F_{H,u}$ from the category $\mathcal L_{\rm b}$ into the category $\mathcal{WPP}_s\mathcal{MV}_{H,u}$ as follows:

Given $(G,b) \in {\mathcal L}_{\rm b},$ we set
$$
{\mathcal F}_{H,u}(G,b) := \Gamma(H\lex  G,(u,b)),
$$
and if $h:(G,b)\to (G_1,b_1),$ then
$$
\mathcal F_{H,u}(h)(t,g)=(t,h(g)),\quad (t,g) \in \Gamma(H\lex G,(u,b)).
$$
It is easy to see that $\mathcal F_{H,u}$ is a functor.

In the same way as the categorical equivalence of strong $(H,u)$-perfect pseudo MV-algebras was proved in the previous section, we can prove the following theorem.

\begin{theorem}\label{th:6.2} The functor ${\mathcal F}_{H,u}$ defines a categorical
equivalence of the category ${\mathcal L}_{\rm b}$ and the category
$\mathcal{WPP}_s\mathcal{MV}_{H,u}$ of weak $(H,u)$-perfect pseudo MV-algebras.
\end{theorem}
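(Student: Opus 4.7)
The plan is to imitate the strategy used for the strong case in Section 8: verify that $\mathcal F_{H,u}$ is faithful, full, and essentially surjective, and then appeal to Mac Lane's criterion \cite[Thm IV.4.1]{MaL} for a categorical equivalence. Essential surjectivity is already essentially done: by Example \ref{ex:weak} together with Theorem \ref{th:6.1}, every weak $(H,u)$-perfect pseudo MV-algebra $M$ is isomorphic to $\Gamma(H\lex G,(u,b)) = \mathcal F_{H,u}(G,b)$ for some (unique up to isomorphism) pair $(G,b) \in \mathcal L_{\mathrm b}$; for the strong case the element $b$ is $0$, while for a non-strong weak algebra $b= 1 -c_u\ne 0$.

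For faithfulness, suppose $h_1,h_2\colon (G,b)\to (G_1,b_1)$ satisfy $\mathcal F_{H,u}(h_1)=\mathcal F_{H,u}(h_2)$. Evaluating both functors at the elements $(0,g)$ with $g\in G^+$ yields $(0,h_1(g))=(0,h_2(g))$, so $h_1$ and $h_2$ agree on $G^+$; since $G$ is generated as a group by its positive cone, $h_1=h_2$ on all of $G$, and trivially $h_1(b)=h_2(b)=b_1$.

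For fullness, I would mimic the argument in the proof of Proposition \ref{pr:4.1}. Given a pseudo MV-algebra homomorphism $f\colon \Gamma(H\lex G,(u,b))\to \Gamma(H\lex G_1,(u,b_1))$, first observe that $f$ respects the canonical $(H,u)$-decomposition on both sides, by uniqueness of such a decomposition (Theorem \ref{th:3.2}(viii)), so $f(M_t)\subseteq M'_t$ for every $t\in[0,u]_H$. In particular $f(0,g)=(0,\tilde h(g))$ with $\tilde h(g)\in G_1^+$, and the partial-addition identity $(0,g_1)+(0,g_2)=(0,g_1+g_2)$ (whenever the sum exists in $M$) forces $\tilde h\colon G^+\to G_1^+$ to be a semigroup homomorphism. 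Extend $\tilde h$ to an $\ell$-group homomorphism $h\colon G\to G_1$ by $h(g)=\tilde h(g^+)-\tilde h(g^-)$; well-definedness, order preservation, and preservation of $\wedge$ follow verbatim as in Proposition \ref{pr:4.1}. To see that $h(b)=b_1$, use that $f$ preserves the top element, $f(u,b)=(u,b_1)$, combined with the equality $(u,b)=(u,0)+(0,b)$ in the ambient $\ell$-group and the fact (forced by the decomposition) that $f(u,0)=(u,0)$ (mapping the cyclic element $c_u=(u,0)\in M_u$ to $(u,0)\in M'_u$ is dictated by $f(0,g)=(0,\tilde h(g))$ combined with the de Morgan laws $M_u^-=M_0$). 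Finally, a direct check shows $\mathcal F_{H,u}(h)(t,g)=(t,h(g))=f(t,g)$ for every $(t,g)\in \Gamma(H\lex G,(u,b))$, using the decomposition layer by layer.

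The main obstacle, and the only place where the argument genuinely differs from the strong case, is the tracking of the distinguished element $b$: one must verify that the $\ell$-homomorphism $h$ extracted from $f$ automatically lies in the subcategory $\mathcal L_{\mathrm b}$, i.e.\ satisfies $h(b)=b_1$. All other steps---faithfulness, the semigroup-to-group extension, preservation of $\wedge$, and reconstruction of $f$ from $h$---are routine adaptations of the corresponding steps for $\mathcal M_{H,u}$ in Section 8, which is why the authors omit the calculations.
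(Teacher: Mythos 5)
Your proposal follows exactly the route the paper intends: the paper gives no separate proof of Theorem \ref{th:6.2} but explicitly defers to the argument for the strong case (Propositions \ref{pr:4.1}--\ref{pr:4.3} and Theorem \ref{th:4.4}), using Theorem \ref{th:6.1} and Example \ref{ex:weak} for essential surjectivity, which is precisely what you do. Your additional check that the extracted $\ell$-group homomorphism satisfies $h(b)=b_1$ is indeed the only step not already present in the strong case, and you handle it correctly.
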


Finally, we present addition open problems.

\vspace{3mm}
\begin{problem}
{\rm (1) Find an equational basis for the variety generated by the set $\mathcal {SPP}_s\mathcal{MV}_{H,u}$. For example, if $(H,u)=(\mathbb Z,1)$ the basis is $2.x^2=(2.x)^2$, see \cite[Rem 5.6]{DDT}, and the case $(H,u)=(\mathbb Z,n)$ was described in \cite[Cor 5.8]{Dv08}.}
\end{problem}


(2) Find algebraic conditions that entail that a pseudo MV-algebra is of the form $\Gamma(H\lex G,(u,0)$, where $(H,u)$ is a unital $\ell$-group not necessary Abelian.

\section{Conclusion}

In the paper we have established conditions when a pseudo MV-algebra $M$ is an interval in some lexicographic product of an Abelian unital $\ell$-group $(H,u)$ and an $\ell$-group $G$ not necessarily Abelian, i.e. $M =\Gamma(H\lex G,(u,0))$. To show, that we have introduced strong $(H,u)$-perfect pseudo MV-algebras as those pseudo MV-algebras that can be split into comparable slices indexed by the elements from the interval $[0,u]_H$. For them we have established a  representation theorem, Theorem \ref{th:3.5}, and we have shown that the category of strong $(H,u)$-perfect pseudo MV-algebras is categorically equivalent to the variety of $\ell$-groups, Theorem \ref{th:4.4}.

We have shown that our aim can be solved also introducing so-called lexicographic ideals. We establish their properties and Theorem \ref{th:local1} gives also a representation of a pseudo MV-algebra in the form $\Gamma(H\lex G,(u,0))$. We show that every lexicographic pseudo MV-algebra is always local, Theorem \ref{compar}.

Finally, we have studied and represented weak $(H,u)$-perfect pseudo MV-algebras as those that they have a form $\Gamma(H\lex G,(u,g))$ where $g \in G$ is not necessary the zero element, Theorem \ref{th:6.1}.

The present study  has opened a door into a large class of pseudo MV-algebras and formulated new open questions, and we hope that it stimulate a new research on this topic.

\end{document}